\renewcommand{\paragraph}[1]{\vspace{6pt} \noindent \textbf{#1}\xspace}
\numberwithin{equation}{section}
\newtheorem{theorem}{Theorem}[section]
\newtheorem{corollary}[theorem]{Corollary}
\newtheorem{observation}[theorem]{Observation}
\newtheorem{lemma}[theorem]{Lemma}
\newtheorem{proposition}[theorem]{Proposition}
\newtheorem{claim}[theorem]{Claim}
\newtheorem{conjecture}[theorem]{Conjecture}
\theoremstyle{definition}
\newtheorem{remark}[theorem]{Remark}
\newtheorem{definition}[theorem]{Definition}
\newtheorem{example}[theorem]{Example}
\newcommand{\GL}{\mathrm{GL}}
\newcommand{\F}{\mathbb{F}}
\newcommand{\Q}{\mathbb{Q}}
\newcommand{\C}{\mathbb{C}}
\newcommand{\R}{\mathbb{R}}
\newcommand{\N}{\mathbb{N}}
\newcommand{\rk}{\mathrm{rank}}
\newcommand{\nilp}{\mathrm{NilptInd}}
\newcommand{\nil}{\mathrm{NilInd}}
\newcommand{\mwl}{\mathrm{MaxWalkLen}}
\newcommand{\mnss}{\mathrm{MaxBdMatSize}}
\newcommand{\MaxBdMatOrder}{\mathrm{MaxBdMatOrd}}
\newcommand{\lsss}{\mathrm{MaxBdRankDim}}
\newcommand{\MaxBdRankOrder}{\mathrm{MaxBdRankOrd}}
\newcommand{\opsys}{\mathcal{F}}
\newcommand{\poly}{\mathrm{poly}}
\newcommand{\M}{\mathrm{M}}
\newcommand{\T}{\mathrm{T}}
\renewcommand{\S}{\mathrm{S}}
\newcommand{\tuple}[1]{\mathbf{#1}}
\newcommand{\tens}[1]{\mathtt{#1}}
\newcommand{\spa}[1]{\mathcal{#1}}
\newcommand{\cA}{\spa{A}}
\newcommand{\cB}{\spa{B}}
\newcommand{\cC}{\spa{C}}
\newcommand{\cD}{\spa{D}}
\newcommand{\cG}{\spa{G}}
\newcommand{\cS}{\spa{S}}
\newcommand{\cT}{\spa{T}}
\newcommand{\tA}{\tens{A}}
\newcommand{\tB}{\tens{B}}
\newcommand{\tC}{\tens{C}}
\newcommand{\tD}{\tens{D}}
\newcommand{\tS}{\tens{S}}
\newcommand{\vB}{\tuple{B}}
\newcommand{\mas}{\mathrm{MaxAcySize}}
\newcommand{\mai}{\mathrm{MaxIndAcyOrd}}
\newcommand{\MSNT}{SNT}
\newcommand{\lns}{\mathrm{MaxNilDim}}
\newcommand{\mrns}{\mathrm{MaxIndNilDim}}
\newcommand{\colspan}{\operatorname{colspan}}
\newcommand{\rowspan}{\operatorname{rowspan}}
\newcommand{\Nsc}{\mathrm{MaxNscSize}}
\newcommand{\Rdc}{\mathrm{MaxRdcDim}}
\newcommand{\Insc}{\mathrm{MaxIndNscOrd}}
\newcommand{\Irdc}{\mathrm{MaxIndRdcDim}}
\newcommand{\colset}{\operatorname{colset}}
\newcommand{\E}{\mathrm{E}}
\newcommand{\aut}{\operatorname{Aut}}
\newcommand{\Conj}{\operatorname{Conj}}
\newcommand{\Cong}{\operatorname{Cong}}
\newcommand{\Aut}{\operatorname{Aut}}
\DeclareMathOperator{\rank}{rank}
\DeclareMathOperator{\End}{End}
\DeclareMathOperator{\Bil}{Bil}
\newcommand{\too}%
{\xrightarrow{\text{\raisebox{-3pt}{$\sim$}}\,}}
\newcommand\smat[1]{\left[\begin{smallmatrix}#1\end{smallmatrix}\right]}
\title{
Connections between graphs and matrix spaces
}
\author{
Yinan Li\thanks{Graduate School of Mathematics, Nagoya University, Japan ({\tt Yinan.Li@math.nagoya-u.co.jp}). Research supported by MEXT Quantum Leap Flagship Program (MEXT Q-LEAP) Grant Number JPMXS0120319794.}
\and
Youming Qiao\thanks{Centre for Quantum Software and Information, 
 University of Technology Sydney, Australia ({\tt Youming.Qiao@uts.edu.au}). 
 Research supported by Australian Research Council DP200100950.
 }
\and
Avi Wigderson\thanks{School of Mathematics, Institute for Advanced Study, 
Princeton, New Jersey 08540 ({\tt avi@ias.edu}). Research supported by NSF grant CCF-1900460.
}
\and
Yuval Wigderson\thanks{Department of Mathematics, Stanford University, Stanford, 
CA 94305 ({\tt yuvalwig@stanford.edu}). Research supported by NSF GRFP Grant DGE-1656518.}
\and
Chuanqi Zhang\thanks{Centre for Quantum Software and Information, 
 University of Technology Sydney, Australia ({\tt 
 Chuanqi.Zhang@student.uts.edu.au}). Research supported by Australian Research 
  Council DP200100950.}
}
\date{\today}
\begin{document}

\maketitle

\begin{abstract}
Given a bipartite graph $G$, the graphical matrix space $\cS_G$ consists of 
matrices whose non-zero entries can only be at those positions corresponding 
to edges in $G$. Tutte (\emph{J. London Math. Soc.}, 1947), Edmonds (\emph{J. Res. Nat. Bur. Standards Sect. B }, 1967) and Lov\'asz (\emph{FCT}, 1979) observed connections between perfect matchings in $G$ 
and full-rank matrices in $\cS_G$. Dieudonn\'e (\emph{Arch. Math.}, 1948) proved a 
tight upper bound on the dimensions of those matrix spaces containing only 
singular matrices. The starting point of this paper is a simultaneous 
generalization of these two classical results: we show that the largest dimension 
over subspaces of $\cS_G$ containing only singular matrices is equal to the 
maximum 
size over subgraphs of $G$ without perfect matchings, based on Meshulam's proof of 
Dieudonn\'e's result
(\emph{Quart. J. Math.}, 1985).

Starting from this result, we go on to establish more connections between properties of graphs and matrix spaces. For example, we establish connections between acyclicity and nilpotency,
between strong connectivity and irreducibility, and between isomorphism and conjugacy/congruence. 
For each connection, we study three types of correspondences, namely the basic 
correspondence, the inherited correspondence (for subgraphs and subspaces), and 
the induced correspondence (for induced subgraphs and restrictions). Some 
correspondences lead to intriguing generalizations of classical results, such as Dieudonn\'e's result mentioned above, and a celebrated theorem of Gerstenhaber regarding the largest dimension of nil matrix spaces (\emph{Amer. J. 
Math.}, 1958). 

Finally, we show some implications of our results to quantum information and 
present open problems in computational complexity motivated by 
these results. 
\end{abstract}
\newpage
\tableofcontents


\section{Introduction}
\subsection{Overview}\label{subsec:overview}
Let $\M(n,\F)$ denote the vector space of $n\times n$ square matrices over a field 
$\F$. A vector subspace $\cS$ of $\M(n,\F)$ is called a \emph{matrix space}. 
Matrix 
spaces are basic and fundamental mathematical objects, and they arise naturally in 
many different areas of mathematics, physics, and computer science. In algebraic 
geometry, they arise in close connection with certain sheaves on projective space 
\cite{EH88}. In topology, they arise naturally in connection to linearly 
independent vector fields on spheres, which led to the development of the Adams 
operations on topological $K$-theory \cite{Adams62,ALP65a}. In invariant theory, 
they were used by Dieudonn\'e \cite{Die48} to classify the symmetries of the 
determinant, recovering a result of Frobenius \cite{Frobenius}. Gerstenhaber 
\cite{Ger58} used matrix spaces to make progress on Albert's problem in the theory 
of non-associative algebras. 
In group theory, Baer observes that (alternating) matrix spaces are closely 
connected with $p$-groups of class $2$~\cite{Bae38}. Recently, 
 they arise in the study of completely positive maps and quantum expanders in quantum information theory~\cite{quasirandom}. 
In computational complexity theory, they underlie the \emph{polynomial identity testing} problem, a 
central challenge in derandomization and algebraic complexity \cite{KI04}. 
Finally, in 
discrete mathematics and theoretical computer science, matrix spaces can be used 
to study matchings in graphs, and this connection has important algorithmic 
consequences \cite{Tut47,Edmonds67,Lov79,Lov89}. We return to this example shortly.

One important way of thinking of matrix spaces is as \emph{symbolic matrices}. 
Namely, we may choose a basis for $\cS$ and represent a generic element of $\cS$ 
as a generic linear combination of the basis elements. By doing so, we construct a 
matrix whose entries are homogeneous linear forms in some variables, and we can 
recover $\cS$ by substituting all possible elements of $\F$ into these variables. 
We discuss symbolic matrices in more detail in~\cref{subsec:complexity}. 

In this paper, we focus on matrix spaces \emph{of restricted support}. Namely, we 
fix a set of positions $E \subseteq [n]^2$, and study properties of matrix spaces 
$\cS \leq \M(n,\F)$ satisfying that every matrix $M \in \cS$ is supported on 
$E$, namely that the $(i,j)$th entry of $M$ is equal to zero for all $(i,j) \notin E$. Crucially for our purposes, 
we can encode the restricted support information as a \emph{graph}. Indeed, the 
support set $E \subseteq [n]\times[n]$ is a set of ordered pairs, and this can 
naturally 
be viewed as either the arc set of a directed graph on the vertex set $[n]$ or as 
the edge set of a bipartite graph with vertex set $[n]\times[n]$. Formally, we can 
make 
the following definition.
\begin{definition}[Graphical matrix spaces]\label{def: graphical matrix space}
For $(i, j)\in [n]^2$, let $\E_{i, j}$ be the elementary matrix in $\M(n, \F)$ 
where the $(i, j)$th entry is $1$, and the remaining entries are $0$.
Suppose $G=(L\cup R, E)$ is a bipartite graph, where $L=R=[n]$, or $G=([n], E)$ is 
a directed graph. The \emph{graphical matrix space} $\cS_G$ (over $\F$) corresponding to $G$ the 
subspace of $\M(n,\F)$ spanned by $\{\E_{i,j}\mid (i,j) \in E\}$.
\end{definition}
Thus, we see that a matrix space supported on the edges of $G$ is the same as a subspace of $\cS_G$. Additionally, we may view the symbolic matrix defining some $\cS \leq \cS_G$ as simply a collection of homogeneous linear forms, each of which is associated with an edge of $G$.

At first glance, it is not clear why encoding the restricted support as a graph is helpful or meaningful. However, it turns out that many natural linear-algebraic properties of matrix spaces correspond directly to graph-theoretic properties of (bipartite or directed) graphs, and this connection is captured by the association of the matrix space $\cS_G$ to the graph $G$. Moreover, the study of matrix spaces with restricted support---that is, subspaces of $\cS_G$---closely mirrors the study of subgraphs of $G$. Many specific connections of this type were studied in the past, as we discuss below. In this paper, we initiate a systematic study of the connections between graphs and their associated graphical matrix spaces. In many instances, these explorations yield surprising generalizations and extensions of known results in the ``full-support'' setting.

To motivate our results, we begin with a connection mentioned above, namely the connection between perfect matchings in bipartite graphs and singularity of matrix spaces, beginning with foundational works of Tutte \cite{Tut47}, Edmonds \cite{Edmonds67}, and Lov\'asz \cite{Lov79}. Let $G$ be a bipartite graph with both parts of size $n$. The key observation is that a linear-algebraic property of the matrix space $\cS_G$ encodes a graph-theoretic property of $G$. Namely, $G$ has a perfect matching if and only if $\cS_G$ contains a non-singular matrix. Indeed, if $G$ has a perfect matching, then the matrix in $\cS_G$ which has a $1$ on every edge of this perfect matching and zeroes elsewhere, is non-singular. Conversely, if $\cS_G$ contains a non-singular matrix, then the expansion of its determinant has at least one non-zero summand, which corresponds to a perfect matching in $G$. Equivalently, if we encode the matrix space $\cS_G$ as a symbolic matrix, then $G$ has a perfect matching if and only if the determinant of this symbolic matrix is not the zero polynomial.

The first important consequence of this connection is algorithmic. Indeed, Lov\'asz \cite{Lov79} used this connection to find an RNC algorithm\footnote{Informally, an efficient, probabilistic parallel algorithm.} for the problem of determining whether a bipartite graph has a perfect matching. Additionally, inspired by this connection, Edmonds \cite{Edmonds67}
asked whether one could devise an efficient, \emph{deterministic} algorithm to determine if such an arbitrary matrix space contains a non-singular matrix. This is one of the most important problems in computational complexity, and we return to it in \cref{subsec:complexity}.

In this paper, we extend the basic connection between perfect matchings in graphs 
and non-singular matrices in matrix spaces in several ways. First, we develop a 
number of other correspondences, showing that a graph (or directed graph) has some 
graph-theoretic property if and only if its associated graphical matrix space has 
some linear-algebraic property. In most instances this connection is fairly 
straightforward to prove (like in the case above), but in some others it is quite 
involved and requires a number of different ideas. Secondly, we prove a number of 
\emph{dimension theorems}, which demonstrate that in some instances, these 
correspondences between graphs and their graphical matrix spaces extend to their 
subgraphs and subspaces, respectively, and are surprisingly deep. 
Third, we prove a number of \emph{order theorems}, which demonstrate that in some 
instances, these correspondences between graphs and their graphical matrix spaces 
extend to their induced subgraphs and induced subspaces, respectively.

Continuing the discussion above, here is an example of one of our dimension theorems, which is a special case of~\cref{thm:non-matchable_subgraph}.
\begin{theorem}\label{thm:basic-matching-dimension}
    Let $G$ be a bipartite graph with two parts of size $n$, and let $\cS_G \leq \M(n,\F)$ be the associated matrix space. The maximum number of edges of a subgraph of $G$ with no perfect matching equals the largest dimension of a subspace of $\cS_G$ containing only singular matrices.
\end{theorem}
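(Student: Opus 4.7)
The easy direction $d(G) \ge m(G)$ is immediate by construction: let $H \subseteq G$ be a subgraph without a perfect matching achieving $|E(H)| = m(G)$. The matrix space $\cS_H$ is a subspace of $\cS_G$ of dimension $m(G)$, and for any $M \in \cS_H$ the Leibniz expansion $\det M = \sum_\sigma \sgn(\sigma) \prod_{i=1}^n M_{i,\sigma(i)}$ vanishes identically, since every non-zero summand would require $(i,\sigma(i)) \in E(H)$ for all $i$ and thus correspond to a perfect matching of $H$---impossible. Hence $\cS_H$ is a singular subspace of $\cS_G$ of dimension $m(G)$, so $d(G) \ge m(G)$.

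For the hard direction $d(G) \le m(G)$, I would adapt Meshulam's proof of Dieudonn\'e's theorem to the graphical setting. The goal is to show that every singular subspace $\mathcal{U} \le \cS_G$ satisfies $\dim \mathcal{U} \le m(G)$, by induction on $n$. The base case $n=1$ is trivial. For the inductive step, pick $A \in \mathcal{U}$ of maximum rank $r$; since $\mathcal{U}$ is singular, $r < n$. By K\"onig's theorem applied to the bipartite support of $A$ in $G$, the support of $A$ contains a matching of size $r$; after permuting row and column labels of $G$ (an isomorphism of bipartite graphs preserving both $d(G)$ and $m(G)$), we may assume this matching is $\{(1,1),\dots,(r,r)\}$ and $A$'s top-left $r\times r$ block is invertible. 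For every $B \in \mathcal{U}$ and $t \in \F$, the matrix $A + tB$ lies in $\mathcal{U}$ and has rank at most $r$, so its Schur complement against the top-left block must vanish identically in $t$. Extracting linear constraints from this vanishing should produce a projection $\pi: \mathcal{U} \to \cS_{G'}$ to a smaller graphical matrix space whose image consists of singular matrices; the inductive hypothesis bounds $\dim \pi(\mathcal{U}) \le m(G')$, and a separate combinatorial bound on $\dim \ker \pi$ (corresponding to the ``upper'' blocks constrained by the structural identities) combines with this to yield $\dim \mathcal{U} \le m(G)$.

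The main obstacle is that Meshulam's original proof freely uses arbitrary invertible transformations $A \mapsto PAQ$ to bring $A$ to canonical form, but such basis changes take $\cS_G$ outside itself and cannot be applied in the graphical setting. My plan is to replace them with vertex permutations of $G$ (which preserve the graphical structure) and to use K\"onig's theorem combinatorially as a substitute for the algebraic canonical form provided by the max-rank element. The key technical step---and I expect the hardest part---is converting the Schur-complement linear constraints on $\mathcal{U}$ back into a concrete subgraph witness: from the constraints extracted from $\det(A+tB) \equiv 0$ one must identify an explicit subgraph $H \subseteq G$ without a perfect matching and with $|E(H)| \ge \dim \mathcal{U}$. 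This translation from linear-algebraic block structure to combinatorial data is where the proof departs from Meshulam's, and I expect it to be driven by an iterated application of Hall's theorem to the block decomposition determined by the rank-$r$ matching in the support of $A$.
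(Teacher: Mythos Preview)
Your easy direction is fine and matches the paper. For the hard direction, however, the paper takes a much shorter and cleaner route that completely sidesteps the obstacle you identify.

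The paper does not induct on $n$ or use Schur complements at all. Instead it invokes Meshulam's lemma \cite{Mes85} directly: for any matrices $A_1,\dots,A_d$, if $p(A_i)$ denotes the lexicographically first nonzero position of $A_i$ and $B$ is the $0/1$ matrix with $1$'s exactly at the positions $p(A_1),\dots,p(A_d)$, then $\langle A_1,\dots,A_d\rangle$ contains a matrix of rank at least $\rho(B)$ (the K\"onig cover number of $B$, i.e.\ the matching number of the bipartite graph with biadjacency matrix $B$). Given a singular subspace $\mathcal{U}\le\cS_G$ of dimension $d$, one simply Gaussian-eliminates a basis so that $p(A_1),\dots,p(A_d)$ are all distinct. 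The crucial observation---and this is the one idea you are missing---is that each $p(A_i)$ is automatically an edge of $G$, because $A_i\in\cS_G$. Hence the $0/1$ matrix $B$ is the biadjacency matrix of a $d$-edge subgraph $H\subseteq G$, and Meshulam's lemma forces the matching number of $H$ to be at most $n-1$ (else $\mathcal{U}$ would contain a full-rank matrix). So $H$ has no perfect matching and $d=|E(H)|\le m(G)$. Done---no induction, no Schur complement, no block decomposition.

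The gap in your proposal is exactly where you suspect it: the Schur-complement constraints $B_{22}-B_{21}A_{11}^{-1}A_{12}-A_{21}A_{11}^{-1}B_{12}+\cdots=0$ mix entries across blocks via $A_{11}^{-1}$, which has no reason to respect the support structure of $G$. Your plan to ``convert these linear constraints back into a concrete subgraph witness'' via iterated Hall is not obviously workable, and even if it can be forced through, it would be far more laborious than necessary. The paper's insight is that the subgraph witness is handed to you for free by the leading-entry positions of a reduced basis---you never need to leave $\cS_G$ or reconstruct combinatorial data from algebraic constraints.
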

Note that if $H$ is an $m$-edge subgraph of $G$ with no perfect matching, then $\cS_H \leq \cS_G$ is an $m$-dimensional matrix space supported on the edges of $G$, and every matrix in $\cS_H$ is singular. \cref{thm:basic-matching-dimension} says that such examples are extremal in the sense of dimension: if we wish to construct a singular matrix space $\cS$ supported on the edges of $G$, then the biggest $\cS$ we can take is ``axis-aligned'', i.e., of the form $\cS_H$ for some $H \subseteq G$ with no perfect matching.

\cref{thm:basic-matching-dimension} generalizes a famous theorem of Dieudonn\'e\footnote{Dieudonn\'e was interested in classifying the symmetries of the determinant, and used this result to find a new proof of Frobenius's \cite{Frobenius} characterization of these symmetries.} \cite{Die48}, who proved that if every matrix in $\cS \leq \M(n,\F)$ is singular, then $\dim \cS \leq n(n-1)$. This is the special case of~\cref{thm:basic-matching-dimension} in which $G=K_{n,n}$, as it is easy to check that the largest subgraph of $K_{n,n}$ with no perfect matching has $n(n-1)$ edges. In other words, \cref{thm:basic-matching-dimension} is simply the ``restricted support'' version of Dieudonn\'e's theorem: it determines the largest dimension of a singular matrix space with (any) restricted support, just as Dieudonn\'e determined the largest dimension of a singular matrix space (with no restriction on its support). Our proof of~\cref{thm:basic-matching-dimension} is based on Meshulam's \cite{Mes85} proof of Dieudonn\'e's theorem, and indeed~\cref{thm:basic-matching-dimension} is almost implicit in Meshulam's work.

We also present an example of one of our order theorems, which is a 
special case of~\cref{thm:non-matchable_induced_subgraph}. For this, we need the 
following notion of induced matrices. Let $B\in \M(n,\F)$ and $U,V\leq\F^n$  
be dimension-$s$ and dimension-$t$ subspaces of $\F^n$, respectively. 
Viewing $B$ as a bilinear form $\F^n\times\F^n\to\F$, the induced matrix 
$B[U,V]\in \M(s\times t,\F)$ with respect to $U,V\leq\F^n$ is obtained by 
restricting the first argument to $U$ and the second to $V$. 
The \emph{induced subspace} $\cS[U,V]$ of $\cS\leq \M(n,\F)$ with respect to $U,V\leq\F^n$ is 
the matrix space consisting of $B[U,V]$ for all $B\in\cS$. The \emph{order} of $\cS[U,V]$ is 
$\dim(U)+\dim(V)=s+t$.  
\begin{theorem}\label{thm:basic-matching-order}
    Let $G$ be a bipartite graph with two parts of size $n$, and let $\cS_G \leq \M(n,\F)$ be the associated matrix space. The maximum number of vertices of an induced subgraph of $G$ with no perfect matching equals the largest order of an induced subspace of $\cS_G$ containing only singular matrices. 
\end{theorem}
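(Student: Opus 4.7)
The plan is to prove the two inequalities separately; each reduces in an essentially immediate way to the classical Tutte–Edmonds–Lov\'asz observation recalled in the introduction, namely that for a bipartite graph with parts of equal size, the absence of a perfect matching is equivalent to every matrix in the associated graphical matrix space being singular. The other ingredient is the trivial fact that non-square matrices are automatically singular (non-invertible).

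For the direction showing that the graph-side maximum is at most the matrix-side maximum, suppose $H = G[L_0 \cup R_0]$ is an induced subgraph of $G$ with no perfect matching. I would set $U := \linspan\{e_i : i \in L_0\}$ and $V := \linspan\{e_j : j \in R_0\}$, the coordinate subspaces of $\F^n$ corresponding to $L_0$ and $R_0$. A direct inspection of the definition of induced subspace identifies $\cS_G[U, V]$ with $\cS_H \leq \M(|L_0| \times |R_0|, \F)$. If $|L_0| \neq |R_0|$, every matrix in $\cS_H$ is rectangular and hence singular. If $|L_0| = |R_0|$, the hypothesis that $H$ admits no perfect matching is equivalent, by Tutte–Edmonds–Lov\'asz, to every matrix in $\cS_H$ being singular. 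In both cases, $\cS_G[U, V]$ is an induced subspace of order $|L_0| + |R_0|$ consisting only of singular matrices.

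For the reverse direction, suppose $U, V \leq \F^n$ satisfy that $\cS_G[U, V]$ contains only singular matrices, and put $s := \dim U$ and $t := \dim V$. If $s + t \leq 2n - 1$, I pick any $L_0 \subseteq L$ and $R_0 \subseteq R$ with $|L_0| + |R_0| = s + t$ and $|L_0| \neq |R_0|$ (always possible since the total is at most $2n - 1$, which forces one of the target sizes to differ from $(s+t)/2$); the resulting induced subgraph has unequal parts and therefore no perfect matching, witnessing an induced subgraph of order $s + t$. If instead $s + t = 2n$, then $s = t = n$ is forced, so $U = V = \F^n$ and $\cS_G[U, V] = \cS_G$ itself; applying Tutte–Edmonds–Lov\'asz to the singularity hypothesis shows that $G$ has no perfect matching, and the full vertex set $L_0 = L$, $R_0 = R$ gives an induced subgraph of order $2n$ with no perfect matching.

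The conceptual content of the proof is thus carried entirely by the classical matching/singularity equivalence together with the rectangular-matrices-are-singular observation; in the bipartite setting there is no real ``hard direction'' to speak of. I anticipate that the main difficulty in the more general \cref{thm:non-matchable_induced_subgraph} (of which the present statement is a special case) will lie in accommodating subspaces $U, V$ that are not axis-aligned, where the clean coordinate-subspace construction used in the first direction is no longer directly available and one cannot reduce back to the underlying graph in such a straightforward manner.
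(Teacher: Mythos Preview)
Your proof is correct, and it takes a genuinely different route from the paper. The paper does not prove \cref{thm:basic-matching-order} directly; instead it proves the more general \cref{thm:non-matchable_induced_subgraph} (for arbitrary rank bound $r$) and obtains \cref{thm:basic-matching-order} as the special case $m=n$, $r=n-1$. In that general proof, the hard direction actually extracts specific vertex subsets from the given subspaces: given $L\leq\F^m$ and $R\leq\F^n$, one picks index sets $V_1,V_2$ where the columns of $T_L,T_R$ are linearly independent, and then argues that $A\,\cS_{G[V_1\cup V_2]}\,B^t \leq \cS_G[L,R]$ for suitable invertible $A,B$, forcing the matching number of $G[V_1\cup V_2]$ to be at most $r$.

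Your argument sidesteps all of this by exploiting a degeneracy specific to the perfect-matching case: whenever $s+t\leq 2n-1$ you can choose an induced subgraph with unequal part sizes, which trivially has no perfect matching, so the only contentful case is $s+t=2n$, where $U=V=\F^n$ and the statement reduces to the basic Tutte--Edmonds--Lov\'asz correspondence. This is entirely legitimate for \cref{thm:basic-matching-order} as stated, and your final paragraph correctly identifies why it will not extend to general $r$: for $r<n-1$ one cannot manufacture the desired induced subgraph by the unequal-parts trick, and one is forced to engage with non-axis-aligned $U,V$ as the paper does.

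One minor imprecision: when $U=V=\F^n$ you write ``$\cS_G[U,V]=\cS_G$ itself'', but by definition $\cS_G[U,V]=T_U\cS_G T_V^t$ for some invertible $T_U,T_V$, which is only \emph{equivalent} to $\cS_G$ under the left-right action, not equal. Since that action preserves rank, your conclusion is unaffected.
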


Most of our results fit into a similar framework to what was described above. First, 
we have what we call a \emph{basic correspondence} between a 
graph-theoretic property $P$ and a linear-algebraic property $Q$, which simply 
means that a graph $G$ has property $P$ if and only if its graphical matrix space 
$\cS_G$ has property $Q$. The dimension theorem boosts this to what we call an 
\emph{inherited correspondence}: this says that the maximum number of edges in a 
spanning subgraph $H \subseteq G$ with property $P$ is equal to the largest 
dimension of a matrix space $\cS \leq \cS_G$ with property $Q$. Similarly, the order theorem 
boosts the basic correspondence to what we call an \emph{induced correspondence}: in this case, the 
number of vertices in the maximum induced subgraph $H$ of $G$ with property $P$ 
equals the order of the maximum induced subspace of $\cS_G$ with property $Q$. 

One useful way of thinking about inherited correspondences like~\cref{thm:basic-matching-dimension} is in terms of the symmetries at play. The natural group action on the set of bipartite graphs with both vertex sets of size $n$ is that of the group $\S_n \times \S_n$, which acts by simultaneously permuting the vertices in each of the two parts. This corresponds to relabeling the vertices in each part, and it of course preserves basic graph-theoretic properties such as the existence of a perfect matching. Similarly, in this context, a natural action on the space $\M(n,\F)$ of $n\times n$ matrices is the left-right action of $\GL(n,\F) \times \GL(n,\F)$, where we multiply on the left and the right by two invertible matrices. If we view matrices in $\M(n,\F)$ as linear maps $\F^n \to \F^n$, then this action corresponds to simultaneously changing bases in the domain and codomain. This action preserves many important linear-algebraic properties of $n\times n$ matrices, such as singularity. Of course, $\S_n \times \S_n$ is a subgroup of $\GL(n,\F) \times \GL(n,\F)$, which simply represents the fact that permuting the rows and columns of a matrix is one way of changing bases on the domain and codomain.

However, $\GL(n,\F) \times \GL(n,\F)$ is a much larger and richer group than $\S_n 
\times \S_n$, and its action on $\M(n,\F)$ is much more complicated than simply 
permuting the rows and columns. In the same way, the lattice of subspaces of 
$\cS_G$ is much richer than the lattice of subgraphs of $G$. Nonetheless, 
\cref{thm:basic-matching-dimension}, as well as our other dimension theorems, say 
that in some instances, this additional structure adds no extra flexibility: the 
largest dimension of a subspace of $\cS_G$ with some property $Q$ equals the 
largest dimension of ``axis-aligned'' subspaces $\cS_G$ with property $Q$, even 
though the set of such subspaces is much poorer.

In the rest of this introduction, we discuss our results in further detail, and explain some more about their connections to other topics. In~\cref{subsec:main-results}, we state our main results; this is arranged into a collection of subsubsections, with each one discussing the correspondences between some graph-theoretic and linear-algebraic properties. In~\cref{subsec:symmetries}, we discuss the importance of underlying group actions to our results. In~\cref{subsec:complexity,subsec:quantum}, we discuss the connections to computational complexity and quantum information theory, respectively. In~\cref{subsec:further-connections}, we broaden our scope, and mention other results which are of the form we discuss, namely correspondences between graphs and matrix spaces. Finally, in~\cref{subsec:open}, we discuss future research directions and open problems.

\subsection{Main results}\label{subsec:main-results}
We now describe our results in more detail. We begin with three topics in which we have dimension theorems: matchings in bipartite graphs, cycles in directed graphs, and strong connectivity of directed graphs. We then discuss several instances where we can prove a basic correspondence but not an inherited correspondence (and in some instances, we can even prove that inherited correspondence cannot hold). We first introduce some basic notation that we will use (and have used).

For $n\in \N$, $[n]:=\{1, 2, \dots, n\}$. Let $\F^n$ be the linear space of length-$n$ \emph{row vectors} over $\F$. 
Let $\M(n, \F)$ (resp.\ $\M(m\times n,\F)$) be the linear space of $n\times n$ (resp.\ $m \times n$) matrices over a field $\F$. For a matrix $B\in\M(n,\F)$ and a vector $v\in\F^n$, \emph{$B$ acts on $v$ from the right: $vB$ is another row vector in $\F^n$.} For a subspace $U\leq \F^n$, let $B(U)=\{uB: u\in U\}\leq \F^n$.
Let $\cS=\langle 
B_1,\dots,B_m\rangle$ be the \emph{linear span} of matrices $B_1,\dots, B_m\in 
\M(n,\F)$. We call $\cS$ a \emph{matrix space}, denoted as $\cS\leq \M(n,\F)$. Let 
$\GL(n,\F)$ be the group of $n\times n$ invertible matrices over $\F$. For a 
matrix $T\in\GL(n,\F)$, let $T^{-1}$ be its inverse. For a matrix $B\in \M(n,\F)$, 
let $B^t$ be its transpose. If $\F=\mathbb{C}$, let $B^*$ be its conjugate transpose. 

A directed graph is $G=(V, E)$, where $V$ is the vertex set 
and $E\subseteq V\times V$ is the \emph{arc} set. We shall mostly work with 
directed graphs with vertex sets being $V=[n]$. 

A bipartite graph is $G=(L\cup R, E)$, where $L$ and $R$ are the left and right 
vertex sets, and $E\subseteq L\times R$ is the \emph{edge} set. 

Except where otherwise stated, $\F$ is an arbitrary field; except where explicitly specified, our results hold for all fields.

\subsubsection{Matchings in bipartite graphs and ranks of matrices}
Given a bipartite graph $G=([m] \cup [n],E)$ with vertex parts $[m]$ and $[n]$, we 
let $\cS_G = \langle \E_{i,j}\mid (i,j) \in E\rangle \leq \M(m \times n,\F)$ be 
the 
graphical matrix space corresponding to $G$. A \emph{matching} is a subset of edges where any two edges do not share common vertices. The \emph{matching number} is the size of a maximum matching, i.e., a matching containing the largest possible number of edges.
We begin by stating formally the basic correspondence which was discussed above, between perfect matchings and non-singular matrices. 
\begin{theorem}[\hspace{1sp}{\cite[Theorem 1]{Edmonds67}}]\label{thm:edm67}
  Let $G=([m]\cup [n], E)$ be a bipartite graph with $m\leq n$ and $\cS_G\leq\M(m\times n, \F)$ be the graphical matrix space associated with $G$. Then for each $r \in [m]$, the matching number of $G$ is at most $r$ if and only if the rank of every matrix in $\cS_G$ is at most $r$. 
\end{theorem}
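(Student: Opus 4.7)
The plan is to prove both directions of the equivalence separately, using König's theorem on bipartite graphs for the less obvious direction.

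For the easier direction, I would show that if the matching number of $G$ exceeds $r$ (say the contrapositive of ``rank $\le r$ for all matrices implies matching number $\le r$''), then $\cS_G$ contains a matrix of rank at least $r+1$. Given a matching $\{(i_1,j_1),\dots,(i_{r+1},j_{r+1})\}$ of size $r+1$ in $G$, the matrix $M = \sum_{k=1}^{r+1} \E_{i_k,j_k}$ lies in $\cS_G$ by definition, and by the disjointness of the matched rows and columns, it contains an $(r+1)\times(r+1)$ permutation submatrix. Hence $\rank(M) \geq r+1$, contradicting the hypothesis.

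For the other direction, suppose the matching number of $G$ is at most $r$. By König's theorem for bipartite graphs, the minimum vertex cover of $G$ has size at most $r$: there exist $S_L \subseteq [m]$ and $S_R \subseteq [n]$ with $|S_L|+|S_R|\le r$ such that every edge $(i,j)\in E$ satisfies $i\in S_L$ or $j\in S_R$. Now for any $M\in\cS_G$, split $M=M_1+M_2$ where $M_1$ collects all entries $(i,j)\in E$ with $i\in S_L$ (i.e., supported on the rows indexed by $S_L$) and $M_2$ collects the remaining entries, which by the covering property must lie in columns indexed by $S_R$. Then $\rank(M_1)\le |S_L|$ and $\rank(M_2)\le |S_R|$, so subadditivity of rank gives $\rank(M)\le |S_L|+|S_R|\le r$.

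Combining both directions yields the biconditional statement for each fixed $r$. The main obstacle, if one wants to avoid invoking König's theorem as a black box, is establishing the forward direction directly: one would need to exhibit, for every matrix $M\in\cS_G$, an explicit decomposition witnessing low rank. The appeal to König's theorem is essentially the cleanest way to produce such a decomposition, and it also makes transparent why the result is specific to bipartite graphs (the min-max duality between matchings and vertex covers fails in general graphs, and indeed one then needs the Tutte-Berge formula for the general matching case). Note also that the argument is characteristic-free since it never invokes the symbolic determinant.
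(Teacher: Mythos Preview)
Your proof is correct. Note, however, that the paper does not actually prove this theorem: it is stated as a cited result of Edmonds, so there is no ``paper's own proof'' to compare against in the strict sense.

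That said, the paper's introduction does sketch the special case $m=n$, $r=n-1$ (perfect matching versus nonsingularity) via a different mechanism: the Leibniz expansion of the determinant of the symbolic matrix. There, if $\cS_G$ contains a nonsingular matrix, the symbolic determinant is a nonzero polynomial, hence has a nonzero monomial, and each such monomial encodes a perfect matching. Edmonds' original argument for general $r$ runs the same way through $(r+1)\times(r+1)$ minors. Your route via K\"onig's theorem is genuinely different: instead of extracting a matching from a nonvanishing minor, you certify the rank bound structurally by a small vertex cover. This has two mild advantages. First, it is uniform over all fields without any polynomial-identity reasoning (no worry about whether a nonzero polynomial has a nonzero evaluation). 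Second, it dovetails with how the paper later uses K\"onig's theorem in \cref{subsec:ranks-inherited}, where the quantity $\rho(B)$ in Meshulam's lemma is exactly the matching number by K\"onig. The determinant/minor approach, on the other hand, is more self-contained (it does not invoke a min--max theorem) and is the historical route that motivated the connection to polynomial identity testing discussed in \cref{subsec:complexity}.
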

Thus, in our terminology, \cref{thm:edm67} establishes a basic correspondence between the graph property $P_r$ of the matching number being at most $r$ and the linear-algebraic property $Q_r$ of all matrices having rank at most $r$. Our next result, the more general form of~\cref{thm:basic-matching-dimension}, is the corresponding inherited correspondence between $P_r$ and $Q_r$.
\begin{theorem}\label{thm:non-matchable_subgraph}
    Let $G=([m]\cup [n], E)$ be a bipartite graph with $m\leq n$ and $\cS_G\leq\M(m\times n, \F)$ be the graphical matrix space associated with $G$. Then for each $r \in [m]$, the maximum size over subgraphs of $G$ whose matching number is at most $r$ equals the largest dimension over subspaces of $\cS_G$ in which every matrix is of rank at most $r$.
\end{theorem}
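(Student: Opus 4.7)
The direction $\geq$ is immediate from \cref{thm:edm67}: any subgraph $H \subseteq G$ with $\nu(H) \le r$ gives a subspace $\cS_H \le \cS_G$ of dimension $|E(H)|$ in which every matrix has rank at most $r$. For the direction $\leq$, I would first invoke K\"onig's theorem to reformulate the goal: a bipartite subgraph $H$ has $\nu(H) \le r$ if and only if there exist $R \subseteq [m]$ and $C \subseteq [n]$ with $|R|+|C| \le r$ such that every edge of $H$ has an endpoint in $R \cup C$. The theorem then reduces to showing that for every $\cS \le \cS_G$ with maximum rank at most $r$, there exist such $R$ and $C$ with $\dim \cS \le |E(G) \cap ((R \times [n]) \cup ([m] \times C))|$.

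The plan is to prove this by induction on $r$ and $m+n$ (lexicographically), adapting Meshulam's proof of Dieudonn\'e's theorem \cite{Mes85} from $G = K_{n,n}$ to an arbitrary bipartite graph $G$. The base case $r=0$ is immediate since $\cS = \{0\}$ and the empty cover suffices. For the inductive step, three standard reductions dispose of ``boundary'' cases: (a) if the maximum rank in $\cS$ is strictly below $r$, apply induction on $r$; (b) if for some vertex $v$ of $G$ every matrix in $\cS$ has the corresponding row (or column) identically zero, then $\cS \le \cS_{G-v}$ and induction on $m+n$ applies; (c) otherwise, the maximum rank in $\cS$ is exactly $r$ and every vertex of $G$ is ``touched'' by some matrix of $\cS$.

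In the main case (c), I would fix $B \in \cS$ of rank exactly $r$, together with $I \subseteq [m]$ and $J \subseteq [n]$ of size $r$ such that the submatrix $B_{I,J}$ is invertible. The polynomial identities coming from $\rank(B + tM) \le r$ for all $t \in \F$ and $M \in \cS$ --- equivalently, the vanishing of all $(r{+}1) \times (r{+}1)$ minors of $B+tM$ --- yield, via Schur-complement expansion of the minor indexed by rows $I \cup \{i\}$ and columns $J \cup \{j\}$, a linear formula (from the first-order coefficient in $t$) that expresses $M_{i,j}$ (for $i \notin I$, $j \notin J$) in terms of the entries of $M$ in rows $I$ and in columns $J$. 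Thus each $M \in \cS$ is determined by its restriction to $(I \times [n]) \cup ([m] \times J)$.

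The main obstacle is that this direct argument only identifies a vertex cover of size $2r$ (namely $I \cup J$), giving a bound that is weaker than required by a factor of two. To achieve the sharp cover of size exactly $r$, I would follow Meshulam's refinement, which exploits finer structure by carefully pivoting between row- and column-eliminations, iteratively collapsing one of $I$, $J$ into the other and reducing the effective cover to $r$ vertices. Adapting this refinement to the graphical setting requires each such pivot step to respect the edge set $E(G)$, and this is where augmenting-path-type reasoning from bipartite matching becomes natural, and constitutes the most delicate step of the proof.
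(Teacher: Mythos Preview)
Your sketch takes a harder road than necessary, and the place you flag as ``the most delicate step'' is in fact where essentially the entire difficulty would live; as written it is a genuine gap. The Schur-complement/Flanders-style argument you outline naturally produces a cover $I\cup J$ of size $2r$, and reducing it to size $r$ while \emph{respecting the edge set $E(G)$} is not a matter of ``careful pivoting'': the row/column changes of basis that drive such refinements destroy the support restriction, so it is unclear this route can be completed at all in the graphical setting.

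The paper avoids this completely by using Meshulam's leading-position lemma as a black box rather than reproving Dieudonn\'e from scratch. Choose a basis $A_1,\dots,A_d$ of $\cS$ by Gaussian elimination (viewing each $A_i$ as a vector in $\F^{mn}$), so that the lexicographically first nonzero positions $p(A_1),\dots,p(A_d)$ are distinct. Since each $A_i\in\cS_G$, each $p(A_i)\in E(G)$; let $H$ be the subgraph of $G$ on these $d$ edges. Meshulam's lemma states that $\langle A_1,\dots,A_d\rangle$ contains a matrix of rank at least $\rho(B_H)$, the minimum line cover of the $0/1$ indicator matrix of $H$, which by K\"onig equals $\nu(H)$. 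Hence $\nu(H)\le r$, and $H$ is the desired subgraph with $d$ edges. The entire argument is two sentences once Meshulam's lemma is quoted; no induction, no Schur complements, and no pivoting are needed.
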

This theorem is proved in~\cref{subsec:ranks-inherited}. 
The proof is based on Meshulam's proof of the Dieudonn\'e--Flanders--Meshulam 
theorem \cite{Mes85}, which gives a bound on the dimension of an $m \times n$ 
matrix space containing matrices of rank at most $r$. In fact, the 
Dieudonn\'e--Flanders--Meshulam theorem corresponds to taking the complete 
bipartite graph $G=K_{m,n}$ in~\cref{thm:non-matchable_subgraph} .

In addition to the basic and inherited correspondences above, we prove one further 
correspondence between matchings and ranks in the context of \emph{induced 
correspondences}. This is a more general version of~\cref{thm:basic-matching-order}.
We first formally define what we mean an induced matrix space here.
\begin{definition}\label{def: induced bipartite graph}
	Let $\cS\leq\M(m\times n, \mathbb{F})$ be a matrix space over the field 
	$\mathbb{F}$. For a subspace $L\leq\mathbb{F}^m$ (resp.\ $R\leq\mathbb{F}^n$) 
	of dimension $s$ (resp.\ $t$), let $T_L$ (resp.\ $T_R$) be an $s\times m$ (resp.\ 
	$t\times n$) matrix whose rows span $L$ (resp.\ $R$). The \emph{induced 
	subspace} of $\cS$ on $L$ and $R$ is defined as $\cS[L, R]:=\{T_LBT_R^t\mid 
	B\in \cS\}\leq \M(s\times t, \mathbb{F})$.\footnote{While different bases lead 
	to different subspaces of $\M(s\times t,\mathbb{F})$, these subspaces are 
	unique up to equivalence. More precisely, let $T_L'$ (resp.\ $T_R'$) be 
	another $s\times m$ (resp.\ $t\times n$) matrix whose rows span $L$ (resp.\ 
	$R$) and let $\cS[L, R]':=\{T_L'B(T_R')^t\mid B\in \cS\}\leq \M(s\times t, 
	\mathbb{F})$, we can find invertible matrices $A_L\in\GL(s,\F)$ and 
	$A_R\in\GL(t,\F)$ such that $A_LT_L=T_L'$ and $A_RT_R=T_R'$. Thus 
	$A_L\cS[L,R]A_R^t=\cS[L,R]'$.} The \emph{order} of $\cS[L, R]$ is $s+t$. 
\end{definition}
We discuss in more detail why this is a natural definition in~\cref{subsec:symmetries} and for 
now simply remark that it mimics the notion of an induced subgraph. The induced 
correspondence in this context is as follows, proved 
in~\cref{subsec:ranks-induced}.
\begin{theorem}\label{thm:non-matchable_induced_subgraph}
	Let $G=([m]\cup [n], E)$ be a bipartite graph with $m\leq n$ and 
	$\cS_G\leq\M(m\times n, \F)$ be the graphical matrix space associated with 
	$G$. Then for each $r \in [m]$, the maximum order over induced subgraphs of 
	$G$ whose matching number is at most $r$ equals the maximum order of an 
	induced subspace of $\cS_G$ in which every matrix is of rank at most $r$. 
\end{theorem}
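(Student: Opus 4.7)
The plan is to reduce both inequalities to Theorem~\ref{thm:edm67} combined with a careful choice of spanning matrices $T_L, T_R$.

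For the direction producing an induced subspace from an induced subgraph, I would take $H = G[I \cup J]$ with $I \subseteq [m]$, $J \subseteq [n]$, $|I|+|J| = s+t$, and matching number at most $r$. Setting $L = \linspan\{e_i : i \in I\}$ and $R = \linspan\{e_j : j \in J\}$, I choose $T_L$ and $T_R$ to be the matrices whose rows are the standard basis vectors indexed by $I$ and $J$, respectively. A direct computation shows $T_L \E_{i',j'} T_R^t$ vanishes unless $(i', j') \in I \times J$, in which case it equals an elementary $s \times t$ matrix. Thus $\cS_G[L, R]$ coincides (after relabeling) with $\cS_H$, and Theorem~\ref{thm:edm67} implies that every matrix in $\cS_G[L, R]$ has rank at most $r$.

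For the reverse direction, suppose $L \leq \F^m$ and $R \leq \F^n$ have dimensions $s$ and $t$ and every matrix in $\cS_G[L, R]$ has rank at most $r$. The key observation is that the freedom in choosing $T_L, T_R$ lets us put them in \emph{pivoted form}: since $T_L$ is $s \times m$ of rank $s$, there is a column set $\Lambda_L \subseteq [m]$ of size $s$ at which the $s \times s$ submatrix is invertible; left-multiplying $T_L$ by the inverse of that submatrix arranges $(T_L)_{[s], \Lambda_L} = I_s$. I would choose $\Lambda_R \subseteq [n]$ of size $t$ analogously for $T_R$. By the footnote to Definition~\ref{def: induced bipartite graph}, this replacement yields an equivalent induced subspace, so the maximum rank is still at most $r$. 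A direct computation then shows that for each $(i, j) \in E \cap (\Lambda_L \times \Lambda_R)$, the matrix $T_L \E_{i,j} T_R^t$ is precisely the elementary $s \times t$ matrix at the slot indexed by the positions of $i, j$ within $\Lambda_L, \Lambda_R$. Hence $\cS_{G[\Lambda_L \cup \Lambda_R]}$ embeds as a subspace of $\cS_G[L, R]$, so every one of its matrices has rank at most $r$. A second application of Theorem~\ref{thm:edm67} yields that the matching number of $G[\Lambda_L \cup \Lambda_R]$ is at most $r$, and this induced subgraph has order $|\Lambda_L| + |\Lambda_R| = s + t$, matching the order of $\cS_G[L, R]$.

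I do not anticipate a significant obstacle. The only nontrivial ingredient beyond Theorem~\ref{thm:edm67} is the elementary fact that any subspace of $\F^m$ has a spanning matrix in pivoted form, used together with the equivalence-invariance of the induced subspace under basis changes. The argument thus bypasses the more delicate Meshulam-style machinery required by the inherited correspondence in Theorem~\ref{thm:non-matchable_subgraph}, illustrating a pattern in which induced correspondences are typically considerably easier to establish than their inherited counterparts.
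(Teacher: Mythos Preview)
Your proposal is correct and essentially matches the paper's proof: both directions proceed exactly as you describe, with the reverse direction selecting pivot-column sets $\Lambda_L, \Lambda_R$ (the paper calls them $V_1, V_2$) and showing that $\cS_{G[\Lambda_L \cup \Lambda_R]}$ embeds, up to invertible left-right transformation, as a subspace of $\cS_G[L,R]$. The only difference is cosmetic: you normalize $T_L, T_R$ to pivoted form first using the equivalence freedom noted in Definition~\ref{def: induced bipartite graph}, whereas the paper keeps the original $T_L, T_R$ and carries the invertible pivot submatrices through the computation explicitly.
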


\subsubsection{Cycles in directed graphs and nilpotent matrices}\label{subsubsec:nil}

Given a directed graph $G=([n],E)$ with vertex set $[n]$, we let the corresponding 
graphical matrix space be $\cS_G = \langle \E_{i,j}\mid (i,j) \in E\rangle \leq 
\M(n,\F)$. Note that the natural group action on the set of directed graphs is 
that of $\S_n$, which acts by permuting the vertices. In the linear-algebraic 
setting, we will be focusing on the conjugation action of $\GL(n,\F)$ on 
$\M(n,\F)$. If we identify $\M(n,\F)$ with the set of endomorphisms of $\F^n$, 
then this action simply corresponds to changing the basis of $\F^n$. We have that 
$\S_n$ is a subgroup of $\GL(n,\F)$, corresponding to the fact that we may 
simultaneously permute the rows and columns of a matrix by a change of basis.

Recall that a directed graph is called \emph{acyclic} if it contains no directed 
cycle. We also say that a matrix space $S \leq \M(n,\F)$ is \emph{nil}\footnote{We 
adopt the terminology \emph{nil} here following the practice in algebra, where the 
distinction of nil and nilpotent algebras naturally leads to the definitions of 
nil and nilpotent matrix spaces; see~\cref{sec:nil}.} 
if every matrix in $S$ is nilpotent. We prove a number of correspondences between 
the graph-theoretic property of being acyclic and the linear-algebraic property of 
being nil, beginning with the following basic correspondence.
\begin{theorem}[Simplified version of~\cref{thm:acyclicity}]\label{thm:nil}
Let $G=([n], E)$ be a directed graph and $\cS_G\leq\M(n, \F)$ be the graphical 
matrix space associated with $G$. Then $G$ is acyclic if and only if $\cS_G$ is 
nil.
\end{theorem}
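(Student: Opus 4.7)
The plan is to prove both directions by elementary, constructive arguments, with essentially no technical obstacles. For the forward direction, I would first observe that if $G$ is acyclic, then it admits a topological ordering: there exists a permutation $\sigma \in \S_n$ such that every arc $(i,j) \in E$ satisfies $\sigma(i) < \sigma(j)$. Letting $P$ be the associated permutation matrix, conjugation by $P$ carries $\cS_G$ into the space of strictly upper triangular matrices in $\M(n,\F)$. Every strictly upper triangular matrix is nilpotent (indeed, its $n$th power is zero), and since nilpotency is preserved under conjugation, every matrix in $\cS_G$ is nilpotent.

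For the reverse direction, I would argue by contrapositive. Assuming $G$ contains a directed cycle on distinct vertices $v_0 \to v_1 \to \cdots \to v_{k-1} \to v_0$, I would exhibit an explicit non-nilpotent element of $\cS_G$, namely
\[
M \;=\; \sum_{i=0}^{k-1} \E_{v_i,\, v_{(i+1) \bmod k}}.
\]
Treating matrices as acting on row vectors from the right (per the paper's convention), $M$ sends $e_{v_i}$ to $e_{v_{(i+1) \bmod k}}$ and annihilates every standard basis vector not supported on the cycle. Consequently $M^k$ equals the projection onto $\linspan(e_{v_0}, \ldots, e_{v_{k-1}})$, which is a nonzero idempotent. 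In particular $M^N \neq 0$ for every positive multiple $N$ of $k$, so $M$ is not nilpotent, and $\cS_G$ is not nil.

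I do not anticipate any serious obstacle: the forward direction reduces to the existence of topological orderings for acyclic digraphs, and the reverse direction is a direct computation with a single explicit matrix. The one conceptual point I would emphasize in the write-up is that nilpotency is invariant under conjugation by permutation matrices, which is what allows us to assume without loss of generality that the topological ordering is the natural order on $[n]$, landing the whole matrix space inside the strictly upper triangular matrices.
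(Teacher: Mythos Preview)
Your proposal is correct and follows essentially the same approach as the paper: the forward direction uses a topological sort to conjugate $\cS_G$ into the strictly upper-triangular matrices, and the reverse direction exhibits the cyclic-shift matrix supported on a cycle as a non-nilpotent element. The only cosmetic difference is that you compute $M^k$ explicitly as an idempotent, whereas the paper simply asserts that the cyclic block is not nilpotent.
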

We are also able to prove more refined basic correspondences which generalize~\cref{thm:nil}. First, we prove a basic correspondence between the 
maximum path length in $G$ 
and the maximum nilpotent index of a matrix in $\cS_G$; see~\cref{thm:acyclicity}. Second, we prove a basic correspondence between the maximum cycle cover size 
in $G$ and the maximum number of zero eigenvalues of a matrix in $\cS_G$; see~\cref{thm:eigenval}.

Moreover, we prove a dimension version of~\cref{thm:nil}, i.e., to boost the basic correspondence to an inherited correspondence, as stated in the following theorem.
\begin{theorem}\label{thm:acyclic_subgraph}
Let $G=([n], E)$ be a directed graph and $\cS_G\leq\M(n, \F)$ be the graphical 
matrix space associated with $G$. The maximum number of arcs in an acyclic subgraph of $G$ equals the largest dimension of a nil subspace of $\cS_G$.
\end{theorem}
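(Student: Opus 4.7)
The easy direction will follow from the basic correspondence Theorem~\ref{thm:acyclicity}: for any acyclic spanning subgraph $H \subseteq G$ with $m$ arcs, $\cS_H$ is a nil subspace of $\cS_G$ of dimension exactly $m$, so the largest nil dimension is at least the largest acyclic size.

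For the reverse inequality, the plan is to induct on $n = |V(G)|$. The inductive step is straightforward when $G$ has a sink $v$ (i.e., $\outdeg(v) = 0$); the source case is symmetric. Every $A \in \cS$ then has its $v$-th row zero, and writing $A = \begin{pmatrix} A' & b \\ 0 & 0 \end{pmatrix}$ with $v$ labelled last, one computes $A^k = \begin{pmatrix} (A')^k & (A')^{k-1} b \\ 0 & 0 \end{pmatrix}$, so $A$ nilpotent forces $A'$ nilpotent. Hence the restriction $A \mapsto A'$ sends $\cS$ into a nil subspace $\cS' \le \cS_{G-v}$; its kernel, consisting of matrices supported on the column of arcs entering $v$, has dimension at most $\indeg(v)$, so $\dim \cS' \ge \dim \cS - \indeg(v)$. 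The inductive hypothesis applied to $\cS' \le \cS_{G-v}$ yields an acyclic $H' \subseteq G-v$ with $|E(H')| \ge \dim \cS - \indeg(v)$, and adjoining all $\indeg(v)$ arcs of $G$ ending at $v$ preserves acyclicity (since $v$ is a sink of $G$ and $H'$ does not touch $v$), giving an acyclic $H \subseteq G$ with $|E(H)| \ge \dim \cS$ as required.

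The main obstacle will be the remaining case in which $G$ has neither a sink nor a source, so $G$ contains a directed cycle and the simple vertex-peeling induction fails. Merely using that nil implies singular, via Theorem~\ref{thm:non-matchable_subgraph}, only bounds $\dim \cS$ by the largest subgraph of $G$ without a cycle cover, which can strictly exceed the acyclic maximum. To close this gap I plan to exploit the nil hypothesis through a Gerstenhaber-style common-kernel argument (extending scalars if necessary): a nil $\cS$ admits a common null vector $u \neq 0$, so every $A \in \cS$ satisfies a fixed linear relation $\sum_i u_i A_{\cdot, i} = 0$ on its columns. When $\supp(u) = \{v\}$, column $v$ vanishes identically across $\cS$, so $v$ becomes an effective source and we reduce to the previous case. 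When $|\supp(u)| \ge 2$, a change of basis on $\F^n$ making $u$ into a standard basis vector kills a column of $\cS$ but destroys the graphical structure, and the delicate step will be transferring an acyclic subgraph obtained inductively in the transformed space back to an acyclic subgraph of the \emph{original} $G$ of at least the same size. I expect this transfer to be the crux of the proof; candidate routes include choosing $u$ of minimum support and arguing combinatorially using $E(\cS) \subseteq E(G)$, or adapting the Meshulam-style approach behind Theorem~\ref{thm:non-matchable_subgraph} by picking a matrix of maximum nilpotency index in $\cS$, putting it into Jordan form, and polarizing identities $\trace(A^k) = 0$ across $\cS$ to peel off one dimension per arc of a minimum feedback arc set of $G$.
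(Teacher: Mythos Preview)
Your easy direction and the sink/source induction step are correct and essentially match what the paper does in spirit. The genuine gap is the remaining case, and your proposed fix does not work.

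Your ``common null vector'' idea fails because a nil matrix space need not have a common (left or right) kernel vector. The paper's own example of a nil-but-not-nilpotent space,
\[
\cS=\left\{\begin{pmatrix}0&x&0\\y&0&x\\0&-y&0\end{pmatrix}:x,y\in\F\right\},
\]
has trivial common left and right kernels (check directly), so there is no $u\neq 0$ with $Au=0$ for all $A\in\cS$. Consequently the change-of-basis step you describe cannot be carried out, and the transfer problem you flag as ``the crux'' never even gets started. The alternative routes you sketch (Jordan form of a matrix of maximal nilpotency index, polarizing trace identities) are too vague to constitute a proof and face the same obstacle: they require a basis change that destroys the support constraint, with no mechanism to recover an acyclic subgraph of the \emph{original} $G$.

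The paper closes exactly this gap with a lemma of de~Seguins~Pazzis: for \emph{any} nil $\cS\leq\M(n,\F)$ there exists a standard basis vector $e_j$ that is \emph{$\cS$-adapted}, meaning no nonzero $M\in\cS$ has $\colspan(M)=\langle e_j\rangle$ (equivalently, no nonzero $M\in\cS$ is supported entirely in row $j$). This is strictly weaker than ``$j$ is a sink of the supporting graph'' but is exactly what the induction needs. With $j=n$ (after relabeling), one restricts to $\cT=\{M\in\cS:B(M)=0\}$, where $B(M)$ is the last column truncated; on $\cT$ the block $A(M)$ is nilpotent, so $A(\cT)$ is nil in $\M(n-1,\F)$. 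The $\cS$-adapted condition says that $A(M)=B(M)=0$ forces $C(M)=0$, so $M\mapsto A(M)$ is injective on $\cT$ and $\dim\cS=\dim B(\cS)+\dim A(\cT)$. Induction on $A(\cT)$ and adjoining the arcs into $n$ then finishes as in your sink case. Crucially, this argument never changes basis, so the supporting-graph structure is preserved throughout---which is precisely what your approaches were missing.
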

See~\cref{subsec:acyclic-inherited} for its proof.
\cref{thm:acyclic_subgraph} is a generalization of a well-known theorem of Gerstenhaber
about the largest dimension of a nil matrix spaces~\cite{Ger58} (cf.~\cref{remark: Gersten}), and its proof is adapted from 
de Seguins Pazzis's proof of Gerstenhaber's theorem~\cite{Paz13}.

We also prove an induced correspondence between the properties of being acyclic 
and nil.
Here, we need to reconcile the difference between conjugation (which preserves 
nilpotency) and congruence (where restrictions are natural), so the following 
alternative definition of induced subspaces is needed. This idea originates from 
the theory of non-commutative graphs~\cite{Wea21}. 
\begin{definition}\label{def:induced}
Let $\cS\leq\M(n, \mathbb{C})$ be a matrix space 
over the complex field $\mathbb{C}$. For a subspace $U\leq\mathbb{C}^n$ of dimension 
$d$, let $T_U$ be an $d\times n$ matrix whose rows form an orthonormal basis of 
$U$. The \emph{induced subspace} of $\cS$ on $U$ is defined as $\cS[U]:=\{ T_UB 
T_U^* \mid B\in \cS\}\leq \M(d, \mathbb{C})$.\footnote{While different orthonormal bases 
lead to different subspaces of $\M(d, \mathbb{C})$, these subspaces are unique up 
to conjugacy (and congruence) by invertible unitary matrices. }
\end{definition}
We discuss its differences with~\cref{def: induced bipartite graph} and why this 
is also a natural definition in~\cref{subsec:symmetries}. The induced 
correspondence between nil and acyclicity is the following, proved in~\cref{sec: 
vertex feedback}.
\begin{theorem}\label{thm:acyclic_restriction}
    Let $G=([n], E)$ be a directed graph and $\cS_G\leq\M(n, \mathbb{C})$ be the graphical 
matrix space associated with $G$ over $\mathbb{C}$. The maximum number of vertices in an acyclic induced subgraph of $G$ equals the largest dimension of $U \leq \mathbb{C}^n$ such that $\cS_G[U]$ is nil.
\end{theorem}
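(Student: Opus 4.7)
The plan is to show the two inequalities that make up the claimed equality. For the easy lower bound, I would observe that if $S\subseteq[n]$ with $G[S]$ acyclic, then setting $U=\linspan\{e_i:i\in S\}\le\mathbb{C}^n$ makes $\cS_G[U]$ isomorphic to $\cS_{G[S]}$ (via the obvious identification of coordinates in $S$), which is nil by~\cref{thm:nil}; this realizes a nil induced subspace of dimension $|S|$.

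The main content is the reverse inequality: given $U$ of dimension $d$ with $\cS_G[U]$ nil, I would produce an acyclic induced subgraph of size $\ge d$. Let $T_U$ be a $d\times n$ matrix whose rows form an orthonormal basis of $U$, and let $c_i\in\mathbb{C}^d$ be the $i$-th column of $T_U$, so that $T_U\E_{i,j}T_U^*=c_ic_j^*$ and $\cS_G[U]=\linspan\{c_ic_j^*:(i,j)\in E\}$. Since $T_U$ has rank $d$, the columns $c_1,\dots,c_n$ span $\mathbb{C}^d$, so I can pick $S=\{i_1,\dots,i_d\}\subseteq[n]$ with $\{c_{i_1},\dots,c_{i_d}\}$ a basis of $\mathbb{C}^d$. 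The remaining task---the heart of the argument---is to show that $G[S]$ is acyclic.

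To prove this I would argue by contradiction. Suppose $G[S]$ contains a cycle $j_1\to j_2\to\cdots\to j_k\to j_1$ (with $j_{k+1}=j_1$). Because $\{c_{j_\ell}\}$ is a subset of the basis $\{c_{i_1},\dots,c_{i_d}\}$, these vectors are linearly independent (and in particular nonzero); normalize $\hat c_{j_\ell} = c_{j_\ell}/\|c_{j_\ell}\|$. Since each $\hat c_{j_\ell}\hat c_{j_{\ell+1}}^*$ is a scalar multiple of $c_{j_\ell}c_{j_{\ell+1}}^*\in\cS_G[U]$, the matrix
\[
M_0 \;=\; \sum_{\ell=1}^k \hat c_{j_\ell}\hat c_{j_{\ell+1}}^* \;\in\;\cS_G[U]
\]
must be nilpotent. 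Setting $W=\linspan\{\hat c_{j_\ell}:\ell\in[k]\}$, a direct check shows $M_0$ kills $W^\perp$ and preserves $W$, so nilpotency of $M_0$ is equivalent to nilpotency of $M_0|_W$. Expanding $M_0\hat c_{j_b}=\sum_\ell\Gamma_{b,\ell+1}\hat c_{j_\ell}$ in the basis $\{\hat c_{j_\ell}\}$, where $\Gamma_{a,b}=\hat c_{j_b}^*\hat c_{j_a}$ is the Gram matrix, identifies the matrix of $M_0|_W$ in that basis as $\Sigma^{-1}\bar\Gamma$, where $\Sigma$ is the cyclic shift $\Sigma e_\ell=e_{\ell+1}$ on $\mathbb{C}^k$. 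Linear independence of the $\hat c_{j_\ell}$'s makes $\Gamma$ positive definite, so $\det(\Sigma^{-1}\bar\Gamma)=\pm\overline{\det\Gamma}\neq 0$, contradicting nilpotency of $M_0|_W$.

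The hard part, I expect, is identifying the correct choice of $S$: the argument only closes once one realizes that $S$ should be a basis-witnessing set (rather than, say, the support $\{i:c_i\neq 0\}$), since it is precisely the linear independence along the cycle that forces positive-definiteness of the Gram matrix $\Gamma$ and thereby invertibility of $\Sigma^{-1}\bar\Gamma$. Once this is in place, the rest is a compact computation converting the nil hypothesis into a purely linear-algebraic contradiction involving cyclic shifts and Gram matrices.
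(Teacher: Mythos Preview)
Your proof is correct and follows the same overall strategy as the paper: both pick $S\subseteq[n]$ to be an index set for which the corresponding columns of $T_U$ form a basis of $\mathbb{C}^d$, and then argue by contradiction that a cycle in $G[S]$ would violate the nil hypothesis on $\cS_G[U]$. The difference lies in how the contradiction is obtained. The paper considers the \emph{product} $(c_{j_1}c_{j_2}^*)(c_{j_2}c_{j_3}^*)\cdots(c_{j_k}c_{j_1}^*)$ along the cycle, which telescopes to $\|c_{j_2}\|^2\cdots\|c_{j_k}\|^2\cdot c_{j_1}c_{j_1}^*\neq 0$; this shows that $\cS_G[U]$ is not \emph{nilpotent} as a matrix space, and the paper then invokes the result (\cref{lem: rank-1 nil space}) that matrix spaces spanned by rank-$1$ matrices are nil if and only if they are nilpotent. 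Your approach instead constructs directly a single non-nilpotent \emph{element} $M_0\in\cS_G[U]$ as the sum along the cycle, and certifies non-nilpotency of $M_0|_W$ via the invertibility of $\Sigma^{-1}\bar\Gamma$, which follows from positive definiteness of the Gram matrix. Your route is self-contained (no external lemma needed), while the paper's is a one-line telescoping argument once that lemma is available.
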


\subsubsection{Strong connectivity and irreducibility}\label{subsubsec:strong-conn-irred}

A directed graph $G=([n], E)$ 
is called \emph{strongly connected} if for any non-empty and proper $S\subset[n]$, there are 
arcs going out of $S$. Equivalently, $G$ is strongly connected if there is a directed path connecting any ordered pair of vertices.
A matrix space $\cS\leq\M(n, \F)$ is \emph{irreducible}, if it has no non-trivial invariant subspace; otherwise we call it reducible. Namely, for any non-zero and proper subspace 
$U<\F^n$, $\cS(U):=\langle \cup_{B\in \cS}B(U)\rangle$ is not contained in $U$. Similar to the properties of being acyclic and nil, we prove basic, inherited, and induced correspondences between strong connectivity and irreducibility. We begin with the basic correspondence.

\begin{theorem}[Simplified version of~\cref{thm:conn_s}]\label{thm:conn}
Let $G=([n], E)$ be a directed graph and $\cS_G\leq\M(n, \F)$ be the graphical 
matrix space associated with $G$. Then $G$ is strongly connected if and only if $\cS_G$ is 
irreducible. 
\end{theorem}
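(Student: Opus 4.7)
The plan is to establish both directions directly, exploiting the fact that the generators $\E_{i,j}$ of $\cS_G$ act on row vectors from the right via $e_k \E_{i,j} = \delta_{k,i}\, e_j$. In particular, for arbitrary $u = \sum_k u_k e_k$ we have $u \E_{i,j} = u_i\, e_j$, which means the arc $(i,j) \in E$ corresponds precisely to the move that ``tests whether $u$ has a nonzero $i$-th coordinate, and if so, deposits a multiple of $e_j$ in the image.'' This single observation drives both implications.

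For the easier ($\Leftarrow$) direction I would argue the contrapositive. Suppose $G$ is not strongly connected, so there is a non-empty proper $S \subsetneq [n]$ with no arcs leaving $S$; that is, for every $(i,j) \in E$, $i \in S$ implies $j \in S$. Define $U = \langle e_i : i \in S\rangle$, a non-zero proper subspace of $\F^n$. For any generator $\E_{i,j}$ and any $u \in U$, the computation $u\E_{i,j} = u_i e_j$ gives zero when $i \notin S$ (since $u_i = 0$) and gives a multiple of $e_j \in U$ when $i \in S$ (using the hypothesis that $j \in S$). Hence $\cS_G(U) \subseteq U$, showing $\cS_G$ is reducible.

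For the ($\Rightarrow$) direction, assume $G$ is strongly connected and let $U \leq \F^n$ be any non-zero invariant subspace; I will show $U = \F^n$. Introduce the ``support set'' $T = \{i \in [n] : \exists u \in U \text{ with } u_i \neq 0\}$. The key inductive step is: if $i \in T$ and $(i,j) \in E$, then picking $u \in U$ with $u_i \neq 0$, invariance forces $u \E_{i,j} = u_i e_j \in U$, hence $e_j \in U$ and in particular $j \in T$. Since $U \neq 0$, $T$ is non-empty; pick any $i_0 \in T$. For each vertex $k \in [n]$, strong connectivity (via the characterization in terms of directed paths) supplies a directed path $i_0 \to i_1 \to \cdots \to i_\ell = k$; iterating the step above along this path yields $e_{i_1}, \ldots, e_{i_\ell} = e_k \in U$. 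Hence every standard basis vector lies in $U$, so $U = \F^n$, as desired. The (degenerate) case $n=1$ is handled vacuously on both sides of the equivalence.

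There is no real obstacle: the only subtleties are the row-vs-column convention (the paper uses row vectors with right action, which I have tracked throughout) and the passage from ``$T$ is non-empty and closed under out-neighbors'' to ``$T = [n]$,'' which is exactly the standard graph-theoretic reformulation of strong connectivity via directed reachability.
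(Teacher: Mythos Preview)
Your proof is correct. The easy direction (not strongly connected $\Rightarrow$ reducible) matches the paper's argument essentially verbatim.

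For the harder direction, your approach differs from the paper's. The paper argues the contrapositive: given a non-trivial invariant subspace $U$ of dimension $d$, it picks a matrix $T_U$ whose rows span $U$, permutes columns so that the first $d$ columns of $T_U$ are independent (equivalently, so that $T_U$ has the form $[I_d\ U_0]$ up to row operations), and then checks by hand that an arc from $[d]$ to $[n]\setminus[d]$ would force a vector $e_j$ with $j>d$ into $\rowspan(T_U)$, a contradiction. Your argument is more direct: starting from any non-zero invariant $U$, you observe that once some coordinate $i$ is ``hit'' by a vector in $U$, the generator $\E_{i,j}$ pushes the standard basis vector $e_j$ itself into $U$, and then strong connectivity propagates this to all of $\F^n$. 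This avoids the row-reduction/permutation bookkeeping entirely and is arguably cleaner for the simplified statement. The paper's matrix-manipulation approach is there because it is really proving the stronger statement $c(G)=c(\cS_G)$ about chains of invariant subspaces, where extracting an actual vertex partition from an arbitrary flag requires the Laplace-expansion/column-selection machinery; your support-set argument does not obviously extend to that refinement. One tiny point: when you run the path argument for $k=i_0$ itself you implicitly use that a strongly connected graph on $n\ge 2$ vertices contains a closed walk of positive length through $i_0$; this is true, but it might be worth one clause to say so explicitly.
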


Next, we state the inherited correspondence between strong connectivity and 
irreducibility, whose proof is given in~\cref{subsec: arc strong connectivity}.
\begin{theorem}\label{thm:lambda}
Let $G=([n], E)$ be a directed graph and $\cS_G\leq\M(n, \F)$ be the graphical 
matrix space associated with $G$. The maximum number of arcs in a non-strongly connected subgraph of $G$ equals the largest dimension of a reducible subspace of $\cS_G$.
\end{theorem}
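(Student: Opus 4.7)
The plan is to prove the two inequalities separately. For the easy direction, let $S \subsetneq [n]$ be a non-empty subset minimizing the arc cut size $c(S) := |\{(i,j) \in E : i \in S, j \notin S\}|$, and denote this minimum by $\lambda(G)$; it is the arc-strong-connectivity of $G$, and the maximum number of arcs in a non-strongly-connected subgraph of $G$ equals $|E| - \lambda(G)$. Deleting these $\lambda(G)$ cut arcs from $G$ yields a non-strongly-connected subgraph $H$ with $|E| - \lambda(G)$ arcs, and $\cS_H \leq \cS_G$ is reducible with invariant subspace $\linspan(e_i : i \in S)$. This supplies the lower bound on the largest reducible dimension.

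For the hard direction, consider any reducible subspace $\cS \leq \cS_G$ with a non-trivial invariant subspace $U \leq \F^n$ of dimension $k$. Let $P$ be a $k \times n$ matrix whose rows form a basis of $U$, and let $Q$ be an $n \times (n-k)$ matrix of rank $n-k$ satisfying $PQ = 0$ (so the columns of $Q$ span $\ker P \subseteq \F^n$). Since the right action of $B$ preserves $U$ if and only if $PBQ = 0$, the linear map $\phi_U \colon \cS_G \to \M(k \times (n-k), \F)$ defined by $\phi_U(B) := PBQ$ has $\cS$ in its kernel. Hence $\dim \cS \leq |E| - \dim \phi_U(\cS_G)$, and the task reduces to proving $\dim \phi_U(\cS_G) \geq \lambda(G)$.

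I expect the key technical step to be exhibiting $\lambda(G)$ linearly independent elements in $\phi_U(\cS_G)$. A direct computation gives $\phi_U(\E_{ij}) = P[:, i] \cdot Q[j, :]$, the outer product of the $i$-th column of $P$ and the $j$-th row of $Q$. Since $P$ has rank $k$, I would choose a subset $S \subseteq [n]$ of size $k$ such that $P[:, S]$ is invertible. A short argument using $PQ = 0$ then shows that $Q[\bar S, :]$, with $\bar S := [n] \setminus S$, is also invertible: if $Q[\bar S, :] v = 0$, then $PQv = 0$ forces $P[:, S] \cdot Q[S, :] v = 0$, and hence $v = 0$. Consequently the rank-one matrices $\{P[:, i] \cdot Q[j, :] : i \in S, j \in \bar S\}$ form a basis of $\M(k \times (n-k), \F)$, and restricting to $(i, j) \in E$ yields $c(S)$ linearly independent elements of $\phi_U(\cS_G)$. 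Therefore $\dim \phi_U(\cS_G) \geq c(S) \geq \lambda(G)$, as required.
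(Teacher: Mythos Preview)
Your proof is correct and follows essentially the same approach as the paper: both set up the linear map $B \mapsto T_U B R$ (in the paper's notation; your $P,Q$ play the roles of $T_U,R$), identify a subset $S$ of size $k$ on which the column block of $P$ is invertible, show the complementary row block of $Q$ is invertible, and conclude that the images of the cut arcs $\E_{i,j}$ with $i\in S,\ j\in\bar S$ are linearly independent rank-one matrices. Your presentation is slightly more streamlined in that you work directly with $P,Q$ satisfying $PQ=0$ and prove the invertibility of $Q[\bar S,:]$ from this relation, whereas the paper embeds $T_U$ into a full invertible matrix $T$ and invokes the Schur complement to get the same conclusion; the underlying linear algebra is identical.
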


Finally, we state our induced correspondence result between strong connectivity 
and irreducibility, whose proof is in~\cref{sec: vertex strong connectivity}.
\begin{theorem}\label{thm:kappa}
    Let $G=([n], E)$ be a directed graph and $\cS_G\leq\M(n, \F)$ be the graphical 
    matrix space associated with $G$. The maximum number of vertices in a non-strongly connected induced subgraph of $G$ equals the largest dimension of $U \leq \mathbb{C}^n$ such that $\cS_G[U]$ is reducible.
\end{theorem}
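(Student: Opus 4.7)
The plan is to prove the two inequalities separately. The easy direction---that the maximum size of a non-strongly-connected induced subgraph of $G$ is at most the largest dimension of $U \leq \mathbb{C}^n$ with $\cS_G[U]$ reducible---proceeds by taking $W \subseteq [n]$ achieving the maximum on the left-hand side and setting $U = \linspan\{e_i : i \in W\}$. Choosing the standard basis of $\mathbb{C}^W$ as the orthonormal basis defining $T_U$, a direct computation shows that $T_U \E_{i,j} T_U^* = 0$ unless $i, j \in W$, in which case the result is the elementary matrix indexing the corresponding arc in $G[W]$; thus $\cS_G[U]$ is naturally identified with $\cS_{G[W]}$. The basic correspondence \cref{thm:conn} applied to $G[W]$ guarantees that $\cS_{G[W]}$ is reducible, so $\dim U = |W|$ witnesses the inequality.

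For the reverse inequality, suppose $\cS_G[U]$ is reducible with $d = \dim U$. My first step is a support reduction. Let $W := \supp(U) = \{i \in [n] : \exists u \in U,\; u_i \neq 0\}$; then $U \leq \mathbb{C}^W$, whence $|W| \geq d$. Since $T_U$ has zero columns outside $W$, the entries of $T_U B T_U^*$ depend only on the $W \times W$ submatrix of $B$, so $\cS_G[U] = \cS_{G[W]}[U]$ under the natural identification $\mathbb{C}^W \hookrightarrow \mathbb{C}^n$. It therefore suffices, after replacing $G$ by $G[W]$, to prove the following claim: if $\supp(U) = [n]$ and $\cS_G[U]$ is reducible, then $G$ itself is not strongly connected; this will produce the desired induced subgraph on $|W| \geq d$ vertices.

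To prove the claim, fix a proper non-zero $\cS_G[U]$-invariant subspace; identifying $\mathbb{C}^d \cong U$ via $T_U$, this corresponds to a proper non-zero $V \subsetneq U$ with $V \cdot \cS_G \subseteq V + U^\perp$ (equivalently, $\pi_U(V \cdot B) \subseteq V$ for every $B \in \cS_G$, where $\pi_U$ denotes orthogonal projection onto $U$). Applying the inclusion to $\E_{i,j}$ and using $v \E_{i,j} = v_i e_j$, the condition becomes: for every arc $(i, j) \in E$ with $i \in S := \supp(V)$, one has $\pi_U(e_j) \in V$. Setting $T := \{j : \pi_U(e_j) \in V\}$, all out-neighbors of $S$ in $G$ lie in $T$. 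The key observation is that $T \subseteq S$: under the standing assumption $\supp(U) = [n]$, the diagonal entry $(P_U)_{jj} = \|\pi_U(e_j)\|^2$ (with $P_U = T_U^* T_U$) is strictly positive and equals the $j$-th coordinate of $\pi_U(e_j)$; hence any $j \in T$ contributes to $V$ a non-zero vector with non-zero $j$-th coordinate, forcing $j \in S$. Therefore $S$ is out-closed in $G$.

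Finally I would conclude by case analysis. If $S \subsetneq [n]$, then $S$ is a non-empty proper out-closed subset of $[n]$, so $G$ is not strongly connected. Otherwise $S = [n]$, and $T \subsetneq [n]$ because $T = [n]$ would give $U = \linspan\{\pi_U(e_j) : j \in [n]\} \subseteq V$, contradicting $V \subsetneq U$; any $j^\star \notin T$ then has no incoming arc in $G$, so no arc goes from $[n] \setminus \{j^\star\}$ to $\{j^\star\}$, and again $G$ is not strongly connected. The main technical obstacle is the interplay between the orthogonal projection $\pi_U$ and the support of $V$: the identity $(P_U)_{jj} = \|\pi_U(e_j)\|^2$, combined with the reduction to the case $\supp(U) = [n]$, is what forces $T \subseteq S$ and unlocks the out-closed structure; the rest of the argument is a direct adaptation of the combinatorics underlying the basic correspondence \cref{thm:conn}.
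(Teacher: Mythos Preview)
Your argument is correct and takes a genuinely different route from the paper's proof.

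For the hard direction, the paper fixes an invariant subspace $V\le U$, conjugates by a unitary $R$ so that $RT_UBT_U^*R^*$ is block lower-triangular, then selects $d$ linearly independent columns of $RT_U$ (via a permutation of vertices) and---using a Laplace-expansion style argument---finds a partition $[d]=I_1\cup I_2$ with certain invertible submatrices $T_1,T_2$. The block structure together with the rank-$1$ form $t_it_j^*$ then rules out arcs from $I_1$ to $I_2$ inside $G[[d]]$, producing a non-strongly-connected induced subgraph on exactly $d$ vertices.

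You instead first pass to $W=\supp(U)$, reducing to the case where every column of $T_U$ is non-zero, and then work directly with $S=\supp(V)$. The key step---that $T:=\{j:\pi_U(e_j)\in V\}\subseteq S$---is where your approach diverges most sharply: rather than locating invertible submatrices, you exploit the positivity of the diagonal of the orthogonal projection $P_U=T_U^*T_U$, namely $(P_U)_{jj}=\|\pi_U(e_j)\|^2>0$ under the standing support assumption. This is exactly the place where the Hermitian structure (and hence the restriction to $\mathbb{C}$, cf.\ \cref{def:induced}) is used, and it makes the argument both shorter and more transparent than the paper's submatrix bookkeeping. As a byproduct you obtain a non-strongly-connected induced subgraph on $|W|\ge d$ vertices rather than exactly $d$; this is harmless for the inequality and in fact slightly stronger. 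The residual case $S=[n]$ is handled cleanly by noting that $T\subsetneq[n]$ forces some vertex of in-degree zero (and $d\ge2$ ensures $|W|\ge2$, so the complement is non-empty).
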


Note that the minimum number of arcs that can be deleted from $G$ to make it not 
strongly connected is a well-known graph-theoretic parameter called the 
\emph{arc-strong connectivity} of $G$. Similarly, the minimum number of vertices 
whose deletion makes $G$ not strongly connected is well-known as the 
\emph{vertex-strong connectivity} of $G$. See e.g.~\cite[Chapter 1.5]{BG08}.
In~\cite{LQ19}, connections
between the vertex and edge connectivities of undirected graphs and certain 
parameters of 
alternating matrix spaces were established. The proofs of 
\cref{thm:lambda,thm:kappa} are non-trivial adaptations of those for 
\cite[Propositions 2.5 and 2.4]{LQ19}.

\subsubsection{Isomorphism, conjugacy, and congruence}\label{subsubsec:iso}

Let $G=([n], E)$ and $H=([n], F)$ be two 
directed graphs. We say that $G$ and $H$ are \emph{isomorphic}, if there exists a 
permutation $\sigma\in \S_n$, such that $(i, j)\in E$ if and only if $(\sigma(i), 
\sigma(j))\in F$. 

For matrix spaces, there are two natural notions which can play the role of
``isomorphism''.
Let $\cS_1, \cS_2\leq\M(n, \F)$ be two matrix spaces. We say that $\cS_1$ and $\cS_2$ are 
\emph{conjugate}, if 
there exists $T\in\GL(n, \F)$, such that $\cS_1=T\cS_2 T^{-1}:=\{TCT^{-1} \mid 
C\in\cS_2\}$. We say that $\cS_1$ and $\cS_2$ are \emph{congruent}, if there exists 
$T\in \GL(n, \F)$, such that $\cS_1=T\cS_2 T^t:=\{TCT^t \mid C\in 
\cS_2\}$.\footnote{When $\F=\mathbb{C}$, or more generally, $\F$ is a quadratic extension 
of a subfield, we may adopt the conjugate transpose $T^*$ instead of $T^t$.}
The following result establishes a basic correspondence between graph isomorphism 
and both notions of matrix space ``isomorphism''.

\begin{theorem}[Combined and simplified version of~\cref{prop:congruence,prop:conjugate}]\label{thm:iso}
Let $G=([n], E)$ and $H=([n], F)$ be two directed graphs and $\cS_G,\cS_H\leq\M(n, \F)$ be their graphical 
matrix spaces, respectively. The following are equivalent: 
\begin{enumerate}
\item $G$ and $H$ are isomorphic.
\item $\cS_G$ and $\cS_H$ are conjugate. 
\item $\cS_G$ and $\cS_H$ are congruent.
\end{enumerate}
\end{theorem}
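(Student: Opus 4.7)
My plan is to handle the forward implications $(1)\Rightarrow(2)$ and $(1)\Rightarrow(3)$ simultaneously via permutation matrices: given a graph isomorphism $\sigma\in\S_n$ realizing $\sigma(G)=H$, let $P_\sigma\in\GL(n,\F)$ be the associated permutation matrix, so that $P_\sigma\E_{i,j}P_\sigma^{-1}=\E_{\sigma(i),\sigma(j)}$ and hence $P_\sigma\cS_G P_\sigma^{-1}=\cS_H$. Since $P_\sigma^t=P_\sigma^{-1}$, the same equality also gives $P_\sigma\cS_G P_\sigma^t=\cS_H$, so both the conjugate and the congruent conclusions follow at once.

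For $(2)\Rightarrow(1)$, I would assume $T\cS_G T^{-1}=\cS_H$ with $T\in\GL(n,\F)$ and extract $\sigma$ directly from the column and row supports of $T$. The key identity is $T\E_{i,j}T^{-1}=T_{(i)}(T^{-1})^{(j)}$, where $T_{(i)}$ denotes the $i$-th column of $T$ and $(T^{-1})^{(j)}$ the $j$-th row of $T^{-1}$; since the left-hand side is a rank-one element of $\cS_H$, this outer product has support in $F=E(H)$, i.e., $\supp(T_{(i)})\times\supp((T^{-1})^{(j)})\subseteq F$ for every $(i,j)\in E$. First I would treat the generic case in which neither $G$ nor $H$ contains a biclique $I\times J$ with $|I|,|J|>1$. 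Then every rank-one matrix in $\cS_H$ is supported at a single position, forcing $T_{(i)}\propto e_{\alpha(i)}$ whenever $i$ has an outgoing arc, and $(T^{-1})^{(j)}\propto e_{\beta(j)}$ whenever $j$ has an incoming arc. Expanding $T^{-1}T=I$ at the $(k,k)$ entry then pins down $\alpha(k)=\beta(k)$ whenever $k$ has both incoming and outgoing arcs, and invertibility of $T$ forces $\alpha$ and $\beta$ to be injective on their domains; assembling them and extending arbitrarily on the isolated vertices yields the required permutation $\sigma$ with $\sigma(G)=H$.

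For $(3)\Rightarrow(1)$ the analogous identity is $T\E_{i,j}T^t=T_{(i)}(T_{(j)})^t$, so only the columns of $T$ enter the analysis, and I would run exactly the same style of argument in the biclique-free case. The main technical obstacle in both implications is the biclique case: when $G$ or $H$ contains a biclique $I\times J$ with $|I|,|J|>1$, rank-one matrices in the ambient graphical matrix space can have full support on $I\times J$, and the columns of $T$ (respectively the rows of $T^{-1}$) are no longer forced to be coordinate vectors. To handle this I would exploit the stabilizer of $\cS_G$ under conjugation (respectively congruence), which always contains the invertible diagonal torus and, in the presence of bicliques, additional block-type automorphisms induced by $\GL$-actions on the biclique components; modifying $T$ within its coset modulo this stabilizer so as to minimize the column supports reduces matters to the biclique-free case, producing the desired graph isomorphism $\sigma$.
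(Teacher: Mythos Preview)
Your forward implications via permutation matrices are correct and match the paper.

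Your biclique-free step, however, has a genuine gap. From $\supp(T_{(i)})\times\supp(T_{(j)})\subseteq F$ (congruence) or $\supp(T_{(i)})\times\supp((T^{-1})^{(j)})\subseteq F$ (conjugacy) for each $(i,j)\in E$, biclique-freeness of $H$ only gives that \emph{at least one} of the two supports is a singleton, not that both are. Concretely, take $G=H$ on vertex set $[3]$ with $E=\{(1,2),(3,2)\}$ (no $2\times 2$ biclique in either graph) and
\[
T=\begin{pmatrix}1&0&0\\0&1&0\\1&0&1\end{pmatrix}.
\]
One checks $T\cS_GT^t=\cS_G$ and $T\cS_GT^{-1}=\cS_G$; vertex $1$ has an outgoing arc, yet $T_{(1)}=(1,0,1)^t$ is not proportional to any $e_k$. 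So your claimed conclusion in the biclique-free case fails. Your reduction in the biclique case (``modify $T$ within its coset modulo the stabilizer so as to minimize the column supports'') is also only a sketch: you have not specified which modification to make nor argued that the process terminates at a permutation matrix.

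For $(3)\Rightarrow(1)$ the paper bypasses all of this with a one-line argument. Since $\det T\neq 0$, there is a permutation $\sigma$ with $t_{\sigma(i),i}\neq 0$ for every $i$; then the $(\sigma(i),\sigma(j))$ entry of $T\E_{i,j}T^t\in\cS_H$ is $t_{\sigma(i),i}t_{\sigma(j),j}\neq 0$, forcing $(\sigma(i),\sigma(j))\in F$. As $\dim\cS_G=\dim\cS_H$ gives $|E|=|F|$, this injection is an isomorphism. Note that this simple trick does \emph{not} transfer to conjugacy: a permutation picked from the nonzero pattern of $T$ need not simultaneously hit nonzero entries of $T^{-1}$ in the right positions.

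For $(2)\Rightarrow(1)$ the paper accordingly uses a different and more technical route. It recasts conjugacy through the horizontal slices $C_i$ of the $n\times n\times m$ array built from the basis $\{\E_{i,j}:(i,j)\in E\}$ (so $C_i$ encodes the out-neighbourhood of $i$), proves that conjugacy is equivalent to the existence of $T\in\GL(n,\F)$ with $\colspan(C_i')\subseteq\colspan(TD_i)$ for all $i$ (where $C_i'$ is the $i$th slice after acting by $T$), and then shows by an explicit case analysis that one may replace the columns of $T$ one at a time by standard basis vectors while preserving these containments, until $T$ becomes a permutation matrix. Your ``coset modification'' idea points in a similar direction, but the paper's argument makes precise exactly which replacement is allowed at each step and why it preserves the relevant invariant.
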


The proof of the equivalence between $1$ and $3$ in \cref{thm:iso} is 
straightforward, and the proof strategy goes 
back to~\cite{HQ21}. We can actually prove a somewhat more general basic 
correspondence for congruence (see \cref{prop:congruence}), which states that $G$ is isomorphic to a subgraph 
of $H$ if and only 
if $\cS_G$ is congruent to a subspace of $\cS_H$. 

The proof of the equivalence between $1$ and $2$ in \cref{thm:iso} is elementary 
but much more complicated, and the proof strategy is inspired by 
\cite[Theorem 4.13 in arXiv version 2]{BS20}. 
Interestingly, the stronger correspondence mentioned in the previous paragraph fails 
in the conjugacy setting. That is, there exist graphs 
$G$ and $H$ such that $H$ is not isomorphic to any subgraph of $G$ but $\cS_H$ is 
conjugate to a subspace of $\cS_G$ (cf.~\cref{example: no embedding version of 
conjugacy}). This suggests a subtle difference between these two 
symmetry notions of matrix spaces.

Interestingly, the basic correspondence between isomorphism and congruence 
\emph{cannot} be boosted to an inherited correspondence, as seen in~\cref{example: no inherited version of congruence}.

\subsubsection{Vertex transitivity and conjugacy/congruence irreducibility} 
Let $G=([n], E)$ be 
a directed graph. We say that $G$ is \emph{vertex-transitive} if its automorphism 
group acts transitively on $[n]$. 
Recall that a matrix group $\cG\leq\GL(n, \F)$ is 
reducible\footnote{Note that this definition agrees with that of reducibility in~\cref{subsubsec:strong-conn-irred}: $\cG\leq \GL(n,\F)$ is reducible if and only if the linear subspace $\langle \cG\rangle \leq \M(n,\F)$ it spans is reducible as a matrix space.}, if there exists a non-zero and proper $U\leq \F^n$ such that for any $A\in \cG$, 
$A(U)\leq U$. Otherwise, we call $\cG$ \emph{irreducible}.
Let $\cS\leq\M(n, \F)$. 
Define $\Conj(\cS):=\{T\in \GL(n, \F) \mid T\cS 
T^{-1}=\cS\}\leq \GL(n, \F)$. 
We say that $\cS$ is \emph{conjugacy irreducible}, if $\Conj(\cS)$ is irreducible as a matrix group. Define $\operatorname{Cong}(\mathcal{S}):=\{T\in\operatorname{GL}(n, \mathbb{F}) \mid T\mathcal{S} T^{t}=\mathcal{S}\}\leq\operatorname{GL}(n, \mathbb{F})$. We say that $\mathcal{S}$ is \emph{congruence irreducible}, if $\operatorname{Cong}(\mathcal{S})$ is irreducible as a matrix group.  The following result establishes a basic correspondence between vertex transitivity and conjugacy/congruence irreducibility.
\begin{theorem}\label{thm:trans}
Let $\F$ be a field of order $>2$, and let $G=([n], E)$ be a directed graph. Then 
the following are equivalent:
\begin{enumerate}
	\item $G$ is vertex-transitive.
	\item $\cS_G$ is conjugacy irreducible.
	\item $\cS_G$ is congruence irreducible. 
\end{enumerate}
\end{theorem}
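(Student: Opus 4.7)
The plan is to prove the two equivalences in parallel, exploiting a common structural observation: both $\Conj(\cS_G)$ and $\Cong(\cS_G)$ contain two important families of matrices. First, every invertible diagonal matrix $D = \diag(d_1, \dots, d_n)$ satisfies $D \E_{i,j} D^{-1} = (d_i/d_j) \E_{i,j}$ and $D \E_{i,j} D^t = d_i d_j \E_{i,j}$, both nonzero multiples of $\E_{i,j}$, so the full diagonal torus $\T \le \GL(n,\F)$ preserves $\cS_G$ under both operations. Second, for $\sigma \in \Aut(G)$, the permutation matrix $P_\sigma$ satisfies $P_\sigma \E_{i,j} P_\sigma^{-1} = P_\sigma \E_{i,j} P_\sigma^t = \E_{\sigma(i), \sigma(j)}$, which lies in $\cS_G$ precisely because $\sigma$ preserves edges. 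Hence $\T$ and all $P_\sigma$ with $\sigma \in \Aut(G)$ lie in $\Conj(\cS_G) \cap \Cong(\cS_G)$.

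For the direction $(1) \Rightarrow (2), (3)$, I argue that any subspace $U \le \F^n$ invariant under $\T$ must be a coordinate subspace $\langle e_i : i \in S\rangle$ for some $S \subseteq [n]$. This is where $|\F| > 2$ enters: picking $\lambda \in \F \setminus \{0, 1\}$ and using the diagonals $\diag(1, \dots, 1, \lambda, 1, \dots, 1)$, whose $1$- and $\lambda$-eigenspaces are coordinate subspaces, forces $U$ to split along the coordinate axes. Such a coordinate subspace is invariant under $P_\sigma$ for every $\sigma \in \Aut(G)$ iff $S$ is a union of $\Aut(G)$-orbits; vertex-transitivity forces $S \in \{\emptyset, [n]\}$, so both $\Conj(\cS_G)$ and $\Cong(\cS_G)$ are irreducible.

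For the converse directions $(2) \Rightarrow (1)$ and $(3) \Rightarrow (1)$, I argue by contrapositive: assume $\Aut(G)$ has at least two orbits on $[n]$ and exhibit a non-trivial invariant coordinate subspace. Any such invariant subspace must be indexed by a union $S$ of $\Aut(G)$-orbits by the argument above, so the challenge is to find one preserved by \emph{every} element of $\Conj(\cS_G)$ (resp.\ $\Cong(\cS_G)$), including those outside the monomial subgroup generated by $\T$ and the $P_\sigma$'s. The plan is to pass to the Lie algebra $\mathfrak{g} = \{X \in \M(n,\F) : [X, \cS_G] \subseteq \cS_G\}$, which weight-decomposes under the adjoint action of $\T$ as $\mathfrak{t}$ together with a collection of root spaces $\F \E_{i,j}$ indexed by an $\Aut(G)$-invariant extended support graph $E^\ast \supseteq E$, whose arcs are characterized by explicit in/out-neighborhood conditions on $G$. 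The $\Conj(\cS_G)$-invariant coordinate subspaces are then exactly those $\langle e_i : i \in S\rangle$ with $S$ simultaneously $\Aut(G)$-invariant and out-closed under $E^\ast$; an analogous analysis using the congruence Lie algebra $\{X : X \cS_G + \cS_G X^t \subseteq \cS_G\}$ handles $\Cong(\cS_G)$. The hard part will be the combinatorial claim that such a non-trivial $S$ always exists whenever $\Aut(G)$ has multiple orbits; I would establish this by extracting a sink from the condensation of the orbit-quotient of $E^\ast$ into its strongly connected components, which lifts to the desired non-trivial $\Aut(G)$-invariant, $E^\ast$-out-closed subset of $[n]$.
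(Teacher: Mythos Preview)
Your argument for $(1)\Rightarrow(2),(3)$ is correct and is exactly the paper's (\cref{prop:transitive}): the diagonal torus $\T$ together with the $\Aut(G)$-permutation matrices already act irreducibly once $|\F|>2$ forces every $\T$-invariant subspace to be a coordinate subspace.

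For the converse you have a real gap. You assert that the $\Conj(\cS_G)$-invariant coordinate subspaces are exactly the $\langle e_i:i\in S\rangle$ with $S$ both $\Aut(G)$-invariant and out-closed under $E^\ast$, but the non-obvious direction of this --- that such a $V_S$ is preserved by \emph{every} $T\in\Conj(\cS_G)$ --- is the heart of the matter, and the Lie-algebra decomposition does not deliver it. Knowing $\mathfrak g=\mathfrak t\oplus\bigoplus_{(i,j)\in E^\ast}\F\E_{i,j}$ only tells you that $V_S$ is preserved by $\T$ and by each one-parameter root subgroup $I+t\E_{i,j}$; adding the $P_\sigma$ handles the obvious monomial elements. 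What you have not shown is that $\Conj(\cS_G)$ is \emph{generated} by these pieces. Over an algebraically closed field of characteristic zero this can be recovered from structure theory (all maximal tori of the identity component are conjugate, so every component meets the normaliser of $\T$, hence contains a monomial matrix whose permutation part must lie in $\Aut(G)$), but over an arbitrary $\F$ with $|\F|>2$ that machinery is unavailable and you offer no substitute. The congruence case is worse: $\{X:X\cS_G+\cS_G X^t\subseteq\cS_G\}$ is not the tangent space of $\Cong(\cS_G)$ in any standard sense. A secondary issue: your sink-of-condensation step produces a nontrivial $S$ only when the orbit-quotient of $E^\ast$ is not strongly connected, and you do not argue why it cannot be.

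The paper sidesteps the characterisation entirely by arguing by contradiction. Assuming $\cS_G$ is conjugacy irreducible but $G$ is not vertex-transitive, it first uses irreducibility to force $G$ to be regular (\cref{lem:regularity}) and then to force any two column indices supporting nonzero entries in the same row of a $T\in\Conj(\cS_G)$ to have identical out-neighbourhoods (\cref{same_neighbor}). With these facts in hand, a delicate single-entry perturbation of $T$ (\cref{claim:zero_block}), combined with the isomorphism-extraction statement \cref{lem:conj_conseq}, manufactures an automorphism of $G$ sending one $\Aut(G)$-orbit to another --- the desired contradiction. The congruence direction is handled by a shorter, separate argument via \cref{cor:cong_conseq}. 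Thus the paper never needs to know what generates $\Conj(\cS_G)$; it works one element $T$ at a time.
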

See~\cref{subsec:transitivity} for the proof. This generalizes 
the 
following result from 
quantum information. It is well-known that a directed graph can be embedded into a 
quantum channel. In~\cite{quasirandom}, Bannink, Bri\"{e}t, Labib, and Maassen 
showed that a directed graph is vertex-transitive if and only if the resulting 
quantum channel is ``irreducibly covariant''.~\cref{thm:trans} readily 
implies this result, and generalizes it from $\F=\mathbb{C}$ to $\F$ being any field of order $>2$.

\subsection{Perspective: symmetries of matrix spaces}\label{subsec:symmetries}
In the results above, it was sometimes natural to view the restricted support of a matrix space as a bipartite graph, and sometimes as a directed graph. Moreover, the corresponding linear-algebraic properties were defined in a number of different ways. These choices stem from different natural ways of viewing the matrix space $\M(n,\F)$, which in turn are associated with different natural group actions on them, which may preserve some properties but not others. We now explain these in detail.

Let $V$ and $W$ be two $n$-dimensional vector spaces over a field $\F$. Then there are (at least) three natural ways of viewing the matrix space $\M(n,\F)$. Associated to each of these interpretations is a natural group action, which may not preserve every natural property of matrices, but does preserve all the natural properties which are inherent to the interpretation. 
\begin{itemize}
    \item First, we may identify $\M(n,\F)$ with the set of linear maps $V \to W$. In this case, the natural group action on $\M(n,\F)$ is the left-right action of the group $\GL(n,\F) \times \GL(n,\F)$, which corresponds to changing the bases of $V$ and $W$. This group action preserves natural properties of linear maps $V \to W$, such as their rank.
    \item Secondly, we may identify $\M(n,\F)$ with the set $\End(V)$ of endomorphisms of $V$, i.e., linear maps from $V$ to itself. In this case, the natural group action is the conjugation action of $\GL(n,\F)$, which corresponds to changing the basis of $V$. This symmetry preserves additional properties, such as the spectrum, determinant, and nilpotency.
    \item Finally, we may identify $\M(n,\F)$ with the set $\Bil(V)$ of bilinear forms $V \times V \to \F$. In this case, the natural action is the congruence action of $\GL(n,\F)$, which arises from changing the basis of the Gram matrices associated with the bilinear forms. This symmetry preserves natural properties of bilinear forms, such as symmetry and non-degeneracy, but does not preserve other properties such as nilpotency.
\end{itemize}

Similarly, the natural group action on $n$-vertex directed graphs is that of $\S_n$ (permuting the vertices), and the natural group action on bipartite graphs with $m$ vertices on the left and $n$ vertices on the right in each part is that of $\S_m \times \S_n$. In each of our correspondences between graph theory and linear algebra, there is an underlying interpretation of $\M(n,\F)$, and correspondingly an underlying relationship between the symmetries. For example, in~\cref{thm:non-matchable_subgraph}, we view $\M(m \times n,\F)$ as the space of linear maps from an $m$-dimensional vector space to an $n$-dimensional vector space, with the symmetries being the left-right action of $\GL(m,\F) \times \GL(n,\F)$. Here, the (bipartite) graph property of matching size is preserved by the action of $\S_m \times \S_n$, and the corresponding matrix property of rank is preserved by the left-right action.

In contrast, in~\cref{thm:nil}, we view $\M(n,\F)$ as the space of endomorphisms of an $n$-dimensional vector space, with the conjugation action of $\GL(n,\F)$. This symmetry preserves the property of nilpotency, just as the corresponding action of $\S_n$ on the set of directed graphs preserves the property of being acyclic.

Finally, in the correspondence between directed graph isomorphism and matrix space congruence (\cref{thm:iso}), we use the third interpretation of $\M(n,\F)$, namely the identification with $\Bil(V)$. This is immediately apparent from the fact that the definition of congruence concerns the natural symmetries of this interpretation.

Note that the third interpretation naturally comes with a notion of 
restriction. 
Indeed, let $V$ and $W$ be vector spaces over $\F$, and $U_1\leq V$ and $U_2\leq 
W$ be two subspaces. Then a bilinear form $V\times W\to \F$ immediately restricts 
to a bilinear form $U_1\times U_2\to \F$.
This is why~\cref{def: induced bipartite graph} is natural for bilinear forms.
In contrast, for a linear map $f: V\to V$, one cannot restrict the codomain from $V$ 
to $U\leq V$ unless $U$ is an invariant subspace of $V$ under $f$.

This is relevant for our two induced correspondences, \cref{thm:acyclic_restriction,thm:kappa}. In both of them, we study properties of endomorphisms (nilpotency and irreducibility, respectively), and thus the symmetry must preserve these properties, which means that the underlying symmetry is the conjugation action of $\GL(n,\F)$. However, because they establish a correspondence with induced subgraphs, we also need a notion of restriction. But as discussed above, in general, restriction does not preserve these properties of endomorphisms, as the natural symmetry is that of $\Bil(V)$, rather than $\End(V)$. Thus, for these induced correspondences, we need a symmetry that is simultaneously a conjugation action and a congruence action. The natural such symmetry is that of the unitary group. This is why induced subspaces (\cref{def:induced}) are only defined when the underlying field is $\mathbb{C}$, and why~\cref{thm:acyclic_restriction,thm:kappa} are only stated over $\mathbb{C}$ as well: it is only when we have access to a symmetry that preserves \emph{both} the structure of $\End(V)$ and $\Bil(V)$ that we can expect such induced correspondences to hold. Moreover, this explains why~\cref{def:induced} is natural: by insisting that the rows of $T_U$ form an orthonormal basis of $U$, we are ensuring that the operation of passing to an induced subspace preserves all the relevant properties. 

As a final remark, we think it is interesting and noteworthy that in the first two interpretations of matrix spaces, we have examples of inherited correspondences (the rank-matching correspondence for the first interpretation, and the acyclicity-nilpotency and strong connectivity-irreducibility correspondences for the second interpretation). In contrast, under the third interpretation, we have an instance of a basic correspondence which \emph{cannot} be boosted to an inherited correspondence (isomorphism and congruence). It would be extremely interesting to classify which basic correspondences yield inherited correspondences and which ones do not; it seems possible that the first two interpretations are more amenable to such inherited correspondences.

\subsection{Perspective: computational complexity} \label{subsec:complexity}

\paragraph{Works based on the connection between matchings 
and ranks.} In 
\cref{subsec:overview}, we reviewed how the connection between perfect matchings 
and full-rank matrices served as a key to several important questions and results 
in computational complexity. Lov\'asz \cite{Lov79} used this correspondence to develop an RNC algorithm for the bipartite matching problem. The question of computing a perfect 
matching (if there exists one) in RNC was subsequently solved by Karp, Upfal and 
Wigderson~\cite{KUW86}, then simplified by Mulmuley, Vazirani and Vazirani~\cite{MVV87}. Recently, 
deterministic 
quasi-NC algorithms were devised for the perfect matching problem on bipartite 
graphs 
\cite{FGS19} and on general graphs~\cite{ST17}.

Based on the connection between bipartite matching and singularity, Edmonds \cite{Edmonds67} proposed the question of finding an efficient deterministic algorithm for determining if a matrix space contains a non-singular matrix (or, equivalently, if a symbolic matrix has a non-zero determinant). Over finite fields this problem is 
NP-complete~\cite{BFS99}, and over large enough fields this problem is in RP 
\cite{Lov79} by the Schwartz--Zippel lemma~\cite{Sch80,Zip79}. The converse 
problem, namely to decide if a matrix space contains only singular matrices, is 
now more commonly known as the symbolic determinant identity testing (SDIT) 
problem. To derandomize SDIT is a central problem in computational complexity. It 
is equivalent to the polynomial identity testing problem for algebraic branching 
programs~\cite{SY10}. Its central importance in computational complexity stems from the following surprising result of Kabanets and Impagliazzo \cite{KI04}: An efficient deterministic algorithm for SDIT implies strong circuit lower 
bounds which seem beyond the reach of current techniques. Underlying this is a foundational result of Valiant \cite{Val79}, who found a way of converting a short algebraic formula for a polynomial $f$ into a small symbolic matrix $B_f$ with $\det(B_f)=f$. An important aspect of Valiant's construction is that the symbolic matrix $B_f$ has a very restricted support; in fact, in his original paper, he explicitly viewed the entries of $B_f$ as living on the edges of a graph! This yields another reason to study matrix spaces with restricted support: They may be easier to understand, and nonetheless a deterministic algorithm for SDIT with certain restrictions on the support would yield a deterministic algorithm for polynomial identity testing. Additionally, it is possible that finding an algorithm for SDIT is easier in certain classes of restricted support, and studying such intermediate questions is a natural approach towards the general problem. One example of restricted support that is natural to study is that which captures \emph{graph rigidity}, itself a celebrated open problem (see e.g.\ \cite{RazWigderson}).

The non-commutative version of the SDIT problem, known as the non-commutative rank 
problem of symbolic matrices, has received considerable attention recently. It can 
be 
viewed as a linear algebraic analog of the problem of deciding if a bipartite 
graph has ``Hall's obstructions''~\cite{Hal35} for perfect matchings. Originally 
proposed by P. M. Cohn~\cite{Cohn75} in the context of free skew fields, this 
problem was recently shown to be in P via three solutions: over $\Q$ by 
\cite{GGOW16}, and over any field first by~\cite{IQS17,IQS18} and then by 
\cite{HH21}.

\paragraph{Discussions on the connection between cyclicity and nilpotency.}
After identifying the correspondence between cycles 
and nilpotent matrices, it is natural to examine the following questions. 
\begin{enumerate}
\item Compute a cycle in a directed graph in NC. 
\item Decide if a matrix space is nil, i.e., consisting of only nilpotent matrices. 
We call this the Symbolic Nil Testing (\MSNT) problem
\end{enumerate}

It turns out that the first problem reduces to computing a perfect matching on a 
bipartite graph in NC (\cref{prop:cyclicity_matching}), and the second problem 
reduces to the symbolic determinant 
identity testing (SDIT) problem (\cref{prop:nil_sing}). In fact, it can be shown 
that computing a cycle in a directed graph can be done in NC 
(\cref{prop:cycle_nc}). 

It is interesting to compare \MSNT{} and SDIT from the perspective of 
deterministic algorithms. 
For example, singular matrix spaces are preserved under the left-right action, and 
the nullcone of the left-right action on matrix tuples consists of those with a 
``shrunk subspace'' (a linear-algebraic analog of the ``Hall's obstruction'' for perfect matching). To determine if a matrix tuple lies in the nullcone of the 
left-right action is just the non-commutative rank problem, which is a rather 
non-trivial problem and we briefly 
touched upon this in the above.
To solve this problem in a black-box 
way is still open.

On the other hand, nil matrix spaces are 
preserved under the conjugation action, and the nullcone of the conjugation action 
on matrix tuples consists of those which can be simultaneously 
strict upper-triangularized. To determine if a matrix tuple $\vB=(B_1, \dots, 
B_m)\in\M(n, \F)^m$ can be simultaneously 
strict upper-triangularized can be solved by testing if
$\vB^n=\langle B_{i_1}B_{i_2}\cdots B_{i_n} \mid i_j\in[m]\rangle$ is  the zero 
matrix space. A black-box quasipolynomial-time algorithm for the Noether 
normalization of this action was shown by Forbes and Shpilka~\cite{FS13}. 

As another example, consider those matrix spaces that have a basis of rank-1 
matrices. In this case, SDIT can be solved in deterministic polynomial time, again 
with non-trivial algorithms~\cite{Gur04,IKS10,IKQS15}. For \MSNT, this is easy 
because it is also strictly upper-triangularizable \cite{MATHES1991215}, so the 
algorithm mentioned in the above paragraph suffices. 

To summarize, \MSNT{} seems easier than SDIT for most special cases. Given these, 
it may be surprising to note that SDIT reduces to a quantitative version of 
\MSNT{} as follows. Recall that a symbolic matrix is a matrix whose entries are 
affine linear forms. Let $B$ be a symbolic matrix. SDIT asks if $\det(B)$ is the 
zero polynomial. The nilpotency index of $B$ is the smallest $k$ such that $B^k$ 
is the zero matrix. 
\begin{proposition}\label{prop:SDIT_NilIndex}
SDIT reduces to the problem of deciding if the nilpotency index of a symbolic 
matrix is no more than a given integer.
\end{proposition}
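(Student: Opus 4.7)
The plan is to encode the determinant polynomial $\det(B)$ as the only nonzero block of a specific power of an auxiliary symbolic matrix, exploiting the classical representation of $\det$ as an iterated matrix product. Concretely, given an $n \times n$ symbolic matrix $B$, the polynomial $\det(B)$ is computable by a polynomial-size algebraic branching program; by the Berkowitz or Mahajan--Vinay construction, one can write
\[
\det(B) = A_1 A_2 \cdots A_L,
\]
where $L, d = \poly(n)$, $A_1$ is of size $1 \times d$, $A_L$ is of size $d \times 1$, and $A_2, \ldots, A_{L-1}$ are of size $d \times d$, with every $A_i$ a symbolic matrix whose entries are affine linear forms in the variables of $B$.

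Next, I would build the block strictly upper triangular symbolic matrix
\[
B' = \begin{pmatrix}
0 & A_1 & 0 & \cdots & 0 \\
0 & 0 & A_2 & \cdots & 0 \\
\vdots & & & \ddots & \vdots \\
0 & 0 & 0 & \cdots & A_L \\
0 & 0 & 0 & \cdots & 0
\end{pmatrix},
\]
with $L+1$ block rows and columns of sizes $(1, d, d, \ldots, d, 1)$. Since $B'$ is block strictly upper triangular, $(B')^{L+1} \equiv 0$ identically. Tracing the unique length-$L$ path along the superdiagonal shows that $(B')^{L}$ has a single potentially nonzero block, namely the top-right $1\times 1$ block, and that block equals the product $A_1 A_2 \cdots A_L = \det(B)$. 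Consequently $(B')^{L} \equiv 0$ as a symbolic matrix if and only if $\det(B) \equiv 0$.

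Outputting the pair $(B', L)$ is therefore a polynomial-time many-one reduction from SDIT on input $B$ to the problem of deciding whether the nilpotency index of a given symbolic matrix is at most a given integer. The only non-routine step is the first one, namely invoking the iterated-matrix-product representation of $\det$; this is a classical consequence of the equivalence between determinants and algebraic branching programs, and once it is in hand the block-triangular construction is mechanical. (If one prefers to avoid a black-box appeal to this equivalence, one can instead build the ABP for $\det$ explicitly, e.g.\ via Mahajan--Vinay's clow-sequence construction, and extract the matrices $A_i$ directly.)
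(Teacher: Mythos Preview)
Your proof is correct and follows essentially the same approach as the paper: represent $\det(B)$ via an iterated matrix product (the paper cites Berkowitz), place the factors on the superdiagonal of a block strictly upper-triangular symbolic matrix, and observe that the top power vanishes iff $\det(B)\equiv 0$. The only cosmetic difference is that you take the ABP product to be a $1\times 1$ scalar directly (with first and last blocks of size $1$), whereas the paper keeps all blocks square of size $t$ and pads with $\E_{1,1}$ on both ends to isolate the $(1,1)$ entry of the product; your variant is marginally cleaner but the idea is the same.
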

\begin{proof}
Let $B$ be a symbolic matrix of size $n\times n$. By \cite{Ber84}, there exist symbolic matrices $C_1, 
\dots, C_\ell$ of 
size $t$, $\ell, t=\poly(n)$, such that the $(1, 1)$ entry of $C_1\cdot \dots 
\cdot C_\ell$ is $\det(B)$. Let $C_0=C_{\ell+1}=\E_{1,1}$. Construct a 
$(\ell+3)\times (\ell+3)$ block matrix $T$ 
with the block sizes being $t$. For $i\in[\ell+2]$, the $(i, i+1)$th block of $T$ 
is $C_{i-1}$. We then see that the $(1, \ell+3)$th block of $T^{\ell+2}$ is 
$C_0\cdot C_1\cdot \dots \cdot C_{\ell+1}$, which is zero if and only if $\det(B)$ 
is the zero polynomial. So the nilpotency index of $T$ is $\leq \ell+2$ if and 
only if $\det(B)$ is the zero polynomial. This concludes the proof.
\end{proof}
Note that the symbolic matrix constructed in the proof of 
\cref{prop:SDIT_NilIndex} is clearly nilpotent. Therefore, it does not give us a 
reduction from SDIT to \MSNT{}.

From the above discussion, at 
present it looks to us that solving the general \MSNT{} is a good challenge. This 
is because classifying nil matrix spaces is a wild problem~\cite{Ger58} with 
implications~\cite{VF17} to the classical Albert's problem~\cite{Alb50}. This is 
also motivated by the work of Makam and Wigderson~\cite{MW21}, 
who showed that the matrix tuples spanning singular matrix spaces do not form a 
nullcone in general. Therefore, it is interesting to identify further algebraic 
varieties for which the membership problem is interesting enough as the test bed 
for further progress towards solving SDIT. 

\paragraph{Applications: NP-hardness for some linear-algebraic problems.}
One consequence of our inherited and induced correspondences is that we can prove NP-hardness results for several linear-algebraic problems, using known NP-hardness results for certain graph-theoretic problems. Namely, we have the following results.
\begin{theorem}
Let $\cS\leq\M(n, \F)$ be of dimension $m$ whose matrices have rank at most $r$. 
Let $d\in[m]$. To decide if $\cS$ admits a dimension-$d$ subspace whose matrices 
have rank at most $r-1$ is NP-hard over any field, and NP-complete over finite 
fields.
\end{theorem}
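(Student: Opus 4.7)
The plan is to use the inherited correspondence of~\cref{thm:non-matchable_subgraph} to transfer $\cc{NP}$-hardness from a bipartite graph problem to the linear-algebraic question.

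First, I would restrict to graphical matrix space inputs. Given a bipartite graph $G = (L \cup R, E)$, \cref{thm:edm67} ensures that every matrix in $\cS_G$ has rank at most the matching number $\nu(G)$, so setting $r := \nu(G)$ satisfies the rank promise. By~\cref{thm:non-matchable_subgraph}, a dimension-$d$ subspace of $\cS_G$ in which every matrix has rank at most $r - 1$ exists if and only if $G$ contains a subgraph with at least $d$ edges whose matching number is at most $\nu(G) - 1$.

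Second, I would recast the graph problem via K\"onig's theorem: a subgraph $H$ of a bipartite graph satisfies $\nu(H) \leq k$ iff $H$ admits a vertex cover of size at most $k$, and the maximum such $H \subseteq G$ is precisely the set of edges of $G$ incident to the optimal $k$-vertex subset of $V(G)$. Hence the graph problem is the decision version of the bipartite \emph{Partial Vertex Cover} (equivalently, \emph{Maximum Edge Coverage}) problem, which is known to be $\cc{NP}$-hard. Composing this reduction with the correspondence from step one yields $\cc{NP}$-hardness of the matrix-space problem over any field.

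Third, for $\cc{NP}$-completeness over finite fields, I would use a graphical witness as a succinct certificate: \cref{thm:non-matchable_subgraph} guarantees that whenever a dimension-$d$ rank-$(r - 1)$ subspace of $\cS_G$ exists, a graphical one of the form $\cS_H$ also does. The certificate is the subgraph $H \subseteq G$, and the conditions $|E(H)| \geq d$ and $\nu(H) \leq r - 1$ can be checked in polynomial time via any bipartite matching algorithm.

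The main obstacle is a parameter-matching issue: fixing $r = \nu(G)$ forces the Partial Vertex Cover budget in the graph instance to equal $k = \nu(G) - 1$, which is a restricted regime of the general problem. To handle arbitrary PVC instances $(G, k, t)$ with $k < \nu(G) - 1$, the reduction must be augmented, for instance by enlarging $\cS_G$ with auxiliary matrices supported outside the edges of $G$ that raise the maximum rank of the combined matrix space to exactly $k + 1$, without introducing spurious low-rank subspaces that would distort the dimension-versus-subgraph correspondence. Designing such auxiliary matrices --- or alternatively, arguing directly that the restricted PVC problem with $k = \nu(G) - 1$ remains $\cc{NP}$-hard on bipartite graphs --- is the delicate step of the argument.
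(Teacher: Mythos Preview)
Your overall approach matches the paper's: reduce from a bipartite subgraph problem via \cref{thm:non-matchable_subgraph}, taking $r=\nu(G)$ so that a dimension-$d$ subspace of $\cS_G$ with rank bound $r-1$ corresponds exactly to a subgraph of $G$ with at least $d$ edges and matching number at most $\nu(G)-1$.

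The ``obstacle'' you flag is not one. The graph problem you arrive at --- does $G$ have a size-$d$ subgraph with matching number at most $\nu(G)-1$ --- is precisely what the paper calls the \emph{$1$-blocker} problem, and the paper simply cites Zenklusen et al.\ (2009) for its NP-completeness on bipartite graphs. So the restricted Partial Vertex Cover regime $k=\nu(G)-1$ is already known to be hard; there is no need to pad the instance with auxiliary matrices or to reduce from general PVC budgets. Your proof plan is complete once you invoke that reference.

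One caveat on your NP-membership argument: your proposed certificate (a subgraph $H$ with $|E(H)|\ge d$ and $\nu(H)\le r-1$) only witnesses YES instances when the input $\cS$ is graphical. The theorem is stated for \emph{arbitrary} $\cS\le\M(n,\F)$, so to place the general problem in NP over finite fields you would need a certificate scheme that works for non-graphical inputs as well --- e.g., a basis of the claimed subspace together with a verification that its maximum rank is at most $r-1$. The paper's proof is equally terse on this point, so you are not being held to a higher standard, but be aware that the graphical certificate alone does not close the membership direction for the problem as stated.
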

\begin{proof}
For a bipartite graph $G=(L\cup R, E)$, a $k$-blocker is a subgraph of $G$ whose 
matching number is at most the matching number of $G$ minus $k$. It was shown 
in~\cite[Theorem 3.3]{ZENKLUSEN20094306} that deciding the existence of a size-$d$ 
$1$-blocker in bipartite graphs is NP-complete. The theorem then follows 
with~\cref{thm:non-matchable_subgraph}.
\end{proof}
\begin{theorem}
Let $\cS\leq\M(n, \F)$ be of dimension $m$, and $d\in[m]$. 
To decide if $\cS$ admits a dimension-$d$ nil subspace is NP-hard 
over any field, and NP-complete over finite fields. 
\end{theorem}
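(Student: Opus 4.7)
The plan is to mirror the proof of the preceding theorem, using \cref{thm:acyclic_subgraph} in place of \cref{thm:non-matchable_subgraph} and the NP-complete Maximum Acyclic Subgraph problem in place of the $1$-blocker problem. The Maximum Acyclic Subgraph problem asks, given a directed graph $G=([n],E)$ and an integer $d$, whether $G$ contains an acyclic subgraph with at least $d$ arcs; via complementation it is equivalent to Feedback Arc Set, one of Karp's original twenty-one NP-complete problems.

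Given an instance $(G,d)$ of Maximum Acyclic Subgraph, I would construct the graphical matrix space $\cS_G \leq \M(n,\F)$ in polynomial time. By \cref{thm:acyclic_subgraph}, the maximum number of arcs in an acyclic subgraph of $G$ equals the largest dimension of a nil subspace of $\cS_G$. Consequently, $G$ admits an acyclic subgraph with at least $d$ arcs if and only if $\cS_G$ admits a $d$-dimensional nil subspace, yielding the NP-hardness assertion over every field.

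For NP-completeness over a finite field $\F$, it remains to place the problem in NP. The natural certificate is a basis $B_1,\ldots,B_d\in\cS$ for a candidate subspace $\cS':=\langle B_1,\ldots,B_d\rangle$; the verifier must check in polynomial time that $\cS'$ is nil. To do this I would invoke the classical fact (of Kolchin--Gerstenhaber type) that a matrix subspace $\cS'\leq\M(n,\F)$ is nil if and only if $(\cS')^n=0$. The latter can be tested by iteratively computing bases of $(\cS')^k$ for $k=1,\ldots,n$: each has dimension at most $n^2$, and a spanning set of $(\cS')^{k+1}$ is obtained by forming products $BC$ with $B$ ranging over a basis of $\cS'$ and $C$ over a basis of $(\cS')^k$, then extracting a basis by Gaussian elimination. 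This runs in polynomial time, so $\cS'$ being nil is polynomially verifiable.

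The main obstacle is justifying the equivalence ``nil if and only if $n$-step nilpotent'' for subspaces of matrices over an arbitrary (not necessarily algebraically closed) field, which is exactly what the NP-membership half hinges on. This reduces to the algebraically closed case, where one produces a common null vector for $\cS'$ (since every element is nilpotent, by a Kolchin-style argument) and inducts on $n$ to obtain a common strictly invariant flag; both sides of the equivalence are preserved under field extension, so the statement descends back to $\F$. Once this ingredient is in hand, the remainder of the proof is a direct plug-in of \cref{thm:acyclic_subgraph}.
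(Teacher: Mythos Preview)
Your NP-hardness argument is correct and matches the paper's proof exactly: reduce from Feedback Arc Set (equivalently, Maximum Acyclic Subgraph) via \cref{thm:acyclic_subgraph}. The paper's proof says nothing more than this.

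Your NP-membership argument, however, contains a genuine error. You assert that a matrix subspace $\cS'\leq\M(n,\F)$ is nil if and only if $(\cS')^n=0$, where $(\cS')^k$ denotes the span of all $k$-fold products of elements of $\cS'$. This is precisely the claim that every nil matrix space is nilpotent, and it is false. The paper itself supplies a counterexample at the start of \cref{sec:nil}: the two-dimensional space
\[
\left\{\begin{bmatrix}0&x&0\\y&0&x\\0&-y&0\end{bmatrix}:x,y\in\F\right\}
\]
is nil (every element has characteristic polynomial $\lambda^3$) but not nilpotent (with $B_1$ corresponding to $(x,y)=(1,0)$ and $B_2$ to $(x,y)=(0,1)$, the alternating product $B_1B_2B_1B_2\cdots$ never vanishes). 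Your proposed Kolchin-style justification---produce a common null vector and induct---fails at the first step: a nil \emph{linear space} need not have a common null vector, because it is not closed under multiplication. Kolchin-type arguments apply to semigroups or algebras of nilpotent matrices, not to linear spaces of them.

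Note that the paper's proof does not address NP-membership either; it only records the hardness reduction. Verifying that a given subspace is nil is exactly the Symbolic Nil Testing problem discussed in \cref{subsec:complexity}, which the paper presents as an open challenge for deterministic polynomial time. So if you want to substantiate the ``NP-complete over finite fields'' clause rather than merely NP-hardness, you will need a different verification strategy from the one you proposed (and the paper does not supply one).
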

\begin{proof}
This follows from~\cref{thm:acyclic_subgraph} and Karp's classical result that the feedback arc set problem is NP-complete 
\cite{Kar72}.
\end{proof}

\begin{theorem}
    Let $\cS\leq\M(n, \mathbb{C})$, and $d\in[n]$. 
    To decide if $\cS$ admits a $d$-dimensional $U\leq \mathbb{C}^n$ such that $\cS[U]$ is 
    nil is NP-hard. 
\end{theorem}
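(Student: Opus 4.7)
The plan is to reduce directly from the directed Feedback Vertex Set problem, which Karp~\cite{Kar72} showed to be NP-complete. An instance of this problem consists of a directed graph $G = ([n], E)$ and an integer $k$, and one must decide whether some set of at most $k$ vertices can be removed so that the remaining induced subgraph is acyclic. Equivalently, setting $d = n - k$, one asks whether $G$ admits an acyclic induced subgraph on at least $d$ vertices. Note that if there is one on more than $d$ vertices, then by taking any $d$-subset we also obtain one on exactly $d$ vertices, so the ``$\geq d$'' and ``exactly $d$'' versions coincide.

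Given such an instance $(G, k)$, I would construct the graphical matrix space $\cS_G \leq \M(n, \mathbb{C})$ (which is clearly computable in polynomial time) and set $d := n - k$. The claim is that $G$ has an acyclic induced subgraph on at least $d$ vertices if and only if $\cS_G$ admits a $d$-dimensional subspace $U \leq \mathbb{C}^n$ such that $\cS_G[U]$ is nil. The forward direction is easy: if $S \subseteq [n]$ with $|S| = d$ induces an acyclic subgraph, then taking $U = \linspan\{e_i : i \in S\}$ (with its standard orthonormal basis) yields an induced subspace $\cS_G[U]$ which coincides with the graphical matrix space of the induced subgraph $G[S]$, and this is nil by~\cref{thm:nil} (or by the acyclic-nil basic correspondence). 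The reverse direction is exactly the content of~\cref{thm:acyclic_restriction}: if some $d$-dimensional $U$ makes $\cS_G[U]$ nil, then the maximum dimension of such a $U$ is at least $d$, which equals the maximum number of vertices in an acyclic induced subgraph of $G$.

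I do not expect any genuine obstacle here, since the heavy lifting has already been carried out in~\cref{thm:acyclic_restriction}, and the NP-completeness of directed Feedback Vertex Set is a classical result. The proof is therefore essentially a one-line invocation of the induced correspondence, in direct analogy with the preceding NP-hardness statements in the paper (which invoke~\cref{thm:non-matchable_subgraph} and~\cref{thm:acyclic_subgraph}, respectively). If one wished to state membership in NP as well, the obstacle would be the need to certify the nilpotency condition over $\mathbb{C}$, which is why only NP-hardness (and not NP-completeness) is claimed in the theorem statement.
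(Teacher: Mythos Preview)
Your proposal is correct and follows essentially the same approach as the paper: reduce from Karp's directed Feedback Vertex Set problem via the induced correspondence between acyclic induced subgraphs and nil induced subspaces. In fact, you cite the right theorem (\cref{thm:acyclic_restriction}), whereas the paper's own proof contains a typo and cites \cref{thm:acyclic_subgraph} instead.
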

\begin{proof}
This follows from~\cref{thm:acyclic_subgraph}
and Karp's well-known result that the 
feedback vertex set problem is NP-complete 
\cite{Kar72}.
\end{proof}

\begin{theorem}
Let $\cS_1, \cS_2\leq\M(n, \F)$. 
To decide if $\cS_1$ contains a congruent copy of $\cS_2$ as a subspace is 
NP-hard over any field, and NP-complete over finite fields. 
\end{theorem}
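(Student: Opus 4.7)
The plan is to reduce from directed Subgraph Isomorphism, which is NP-complete: given directed graphs $G$ and $H$ (padded with isolated vertices so that both have vertex set $[n]$), decide whether $G$ is isomorphic to a (not necessarily induced) subgraph of $H$. The reduction is simply to set $\cS_2 := \cS_G$ and $\cS_1 := \cS_H$. Both graphical matrix spaces come equipped with the standard basis $\{\E_{i,j} : (i,j) \in E\}$ of size equal to the arc count, so they can be written down in polynomial time. The heart of the reduction is the strengthening of the basic congruence correspondence promised in~\cref{prop:congruence} and remarked upon after~\cref{thm:iso}: $G$ is isomorphic to a subgraph of $H$ if and only if $\cS_G$ is congruent to a subspace of $\cS_H$. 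Thus an instance $(G,H)$ of directed Subgraph Isomorphism maps to an instance $(\cS_1, \cS_2)$ of our problem with the same yes/no answer, which gives NP-hardness over every field.

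For NP membership when $\F$ is finite, I would use the natural witness: a matrix $T \in \GL(n, \F)$, whose bit-complexity is polynomial in $n$ and $\log|\F|$. Given $T$ and a basis $B_1, \dots, B_m$ of $\cS_2$, the verifier computes $T B_i T^t$ for each $i$ and checks via Gaussian elimination that each lies in $\cS_1$; equivalently, that $\dim(\cS_1 + \langle T B_i T^t : i \in [m]\rangle) = \dim(\cS_1)$. This runs in polynomial time in the total input length, so the problem lies in NP and is NP-complete over finite fields.

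I do not foresee a serious obstacle. The only point to verify carefully is that~\cref{prop:congruence} delivers the subgraph-embedding version of the correspondence rather than merely the ``isomorphism iff congruence'' equivalence of~\cref{thm:iso}; the excerpt states this stronger version explicitly. Over infinite fields one loses NP membership because there is no a priori polynomial-size witness for the congruence transformation $T$, which is exactly why the theorem settles only for NP-hardness at that level of generality.
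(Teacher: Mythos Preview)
Your proposal is correct and follows essentially the same approach as the paper: reduce from (directed) Subgraph Isomorphism via the graphical matrix space construction, invoking \cref{prop:congruence} for the equivalence between subgraph embedding and congruent-subspace containment. You flesh out the NP membership argument over finite fields (polynomial-size witness $T$, verified by Gaussian elimination), which the paper leaves implicit, but this is routine and matches the intended argument.
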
 
\begin{proof}
This follows from the NP-completeness of the subgraph isomorphism problem (via 
e.g. Hamiltonian cycles of paths \cite{Kar72}) and \cref{prop:congruence} (mentioned after~\cref{thm:iso}).
\end{proof}

\subsection{Connections with quantum information theory} \label{subsec:quantum}
For this subsection, we restrict to the case $\F=\mathbb{C}$.
Graph theory 
has inspired several research topics in quantum information. For example, Hastings~\cite{PhysRevA.76.032315}, and Ben-Aroya, Schwartz, and Ta-Shma~\cite{BST10} 
introduced quantum expanders which are quantum channels satisfying certain regular 
and expanding properties. Roughly speaking, they view quantum channels as generalizations of the adjacency matrices of graphs. This leads to other research lines linking quantum information theory with graph theory, which indicate that some spectral
properties of quantum channels properly generalize certain graph-theoretic properties (of adjacency matrices). 

As another example, in the study of zero-error communications of quantum channels, Duan, Severini and Winter in  
\cite{duan2013} associated each quantum channel an operator system, which is analog to the classical setting where Shannon associated each classical channel with a graph~\cite{MR0089131}. This gives rise to another way to generalize graphs to matrix spaces. Along with this generalization, Duan, Severini, and Winter proposed quantum generalizations of independence number and the Lov\'asz theta function for operator systems. Viewing operator systems as generalizations of graphs has led to a fruitful research line which connects combinatorics, operator algebra and optimization theory; see~\cite{Wea21} and the references therein.

We point out that the results presented in this paper recover and generalize several 
interesting results relating to the above two types of generalizations. Thus, we 
believe that our framework provides a more systematical way to study connections 
between graphs and matrix space. To be more specific, we show 
that~\cref{thm:conn,thm:iso,thm:trans} can be applied to the quantum settings, 
which reveals that 
\begin{itemize}
\item the irreducibility of quantum channels (introduced in~\cite{EH78,QCO}) is a 
generalization of the strong connectivity of directed graphs~(\cref{cor: 
equivalence quantum classical irreducibility});
\item the irreducibly covariant quantum channels are generalizations of vertex 
transitive graphs~(\cref{cor: vertex-transitive}, first proved 
in~\cite{quasirandom});
\item the connectivity of operator systems (introduced 
in~\cite{CHAVEZDOMINGUEZ202137}) is a generalization of connectivity of 
(undirected) graphs~(\cref{cor: conn}, first proved 
in~\cite{CHAVEZDOMINGUEZ202137});
\item the isomorphism between operator systems (introduced 
in~\cite{ORTIZ2015128}) is a generalization of the isomorphism between graphs 
(\cref{cor: isomorphism}, first proved in~\cite{ORTIZ2015128}).
\end{itemize}
We emphasize that all the above generalizations are already shown in the 
references. However, our results simplify some of the proofs and obtain 
more general results. See~\cref{sec: quantum} for a more detailed discussion.

\subsection{Some known connections between graphs and matrix 
spaces}\label{subsec:further-connections}
Inspired by the classical 
connection between perfect matchings and full-rank matrices, 
several correspondences between 
graph-theoretic structures, and structures for alternating matrix spaces, have 
been 
 discovered recently, including:
\begin{itemize}
\item Independent sets vs. (totally) isotropic spaces, and vertex colorings vs 
(totally) isotropic 
decompositions~\cite{BCG+19}.
\item 
Connectivity vs. orthogonal indecomposability. As a consequence, 
correspondences of vertex and edge connectivities for alternating matrix spaces 
are also presented~\cite{LQ19}.
\item Isomorphism notions for graphs and alternating matrix spaces~\cite{HQ21}.
\end{itemize}

Some graph-theoretic questions were also translated to the matrix space setting, 
including:
\begin{itemize}
\item Transferring techniques for graph isomorphism to study matrix space congruence 
\cite{LQ17}.
\item Enumeration formulas of isotropic spaces and orthogonal decompositions as 
$q$-analogs of enumeration formulas of independent sets and connected graphs 
\cite{Qia21}.
\item Tur\'an and Ramsey problems for alternating matrix spaces 
\cite{Qia20_extremal}.
\item New graph polynomials from group zeta functions, through the connection between alternating matrix spaces and nilpotent $p$-groups~\cite{Rossmann19,Rossmann21}.
\end{itemize}

Some results and techniques in the above works inspire results in the present 
work. One key difference is that one emphasis in this paper is to examine the 
connection between \emph{directed graphs} and matrix spaces, whereas most works in 
the above-studied connections between undirected graphs and bipartite graphs and 
matrix spaces.

\subsection{Discussion, research directions, and open problems}\label{subsec:open}
As we have attempted to demonstrate in the introduction, there is a surprising wealth of correspondences between graph theory and matrix spaces. Our goal in this paper is to initiate a systematic study of such correspondences, and we hope that this will lead to fruitful future work. One big ``meta-question'' that we are interested in is a further development of this theory. Which other natural graph-theoretic properties have linear-algebraic analogs? Conversely, which linear-algebraic properties correspond to graph-theoretic ones in matrix spaces of restricted support? More broadly, can one develop a general theory and characterize the family of such properties? Additionally, is it possible to characterize which basic correspondences can be boosted to inherited and/or induced correspondences? Given that two of our inherited correspondences (\cref{thm:non-matchable_subgraph,thm:nil}) generalize classical results on matrix spaces (Dieudonn\'e's theorem \cite{Die48} and Gerstenhaber's theorem \cite{Ger58}, respectively), it is possible that such a general theory could imply or generalize important results in algebra.

It would also be interesting to find further applications of such correspondences. 
As mentioned above, the basic correspondence between bipartite perfect matchings 
and singularity of matrix spaces has been immensely fruitful, yielding in 
particular an efficient randomized parallel algorithm for perfect matching 
\cite{Lov79}. 
As discussed in \cref{subsec:complexity}, our new results allow us 
to prove NP-hardness 
results for certain linear-algebraic questions, and lead to the matrix space nil 
test problem which is rather interesting in the context of polynomial identity 
testing.
In \cref{subsec:quantum}, we explained how our results generalize certain results 
in quantum information theory.
The reason for applications in quantum information is that matrix tuples and 
matrix spaces are used to set up superoperators there. 
Since matrix spaces arise in numerous mathematical areas as indicated at the 
beginning of \cref{subsec:overview}, with the number and depth of the 
correspondences that are now known to exist, it seems likely that there are 
further applications waiting to be discovered.

Finally, we mention one specific conjecture whose proof or disproof we would be extremely interested in. Recall that~\cref{thm:nil} gives a basic correspondence between the properties of being acyclic and nil. There is a simple natural generalization of this, which we prove in~\cref{subsec: quantatitative}, which says the following for a directed graph $G=([n],E)$ and any non-negative integer $k$. Every collection of disjoint cycles in $G$ covers at most $k$ vertices if and only if every matrix in $\cS_G$ has at most $k$ non-zero eigenvalues. Note that the case $k=0$ corresponds precisely to $G$ being acyclic, and to $\cS_G$ being nil. Our conjecture is that this basic correspondence can be boosted to an inherited correspondence.
\begin{conjecture}\label{conj:atkinson}
    Let $G=([n],E)$ be a directed graph, and let $0 \leq k \leq n$ be an integer. The maximum number of edges in a subgraph $H$ of $G$ in which every collection of disjoint cycles covers at most $k$ vertices equals the largest dimension of a subspace of $\cS_G$ in which every matrix has at most $k$ non-zero eigenvalues.
\end{conjecture}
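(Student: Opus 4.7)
The plan is to generalize the proof of \cref{thm:acyclic_subgraph} (the $k=0$ case), which itself adapts de Seguins Pazzis's proof of Gerstenhaber's theorem. The lower bound direction is immediate from the basic correspondence established in \cref{subsec: quantatitative}: if $H \subseteq G$ is a subgraph in which every collection of vertex-disjoint cycles covers at most $k$ vertices, then every $B \in \cS_H$ has at most $k$ non-zero eigenvalues, and $\dim \cS_H = |E(H)|$, so the dimension side of the conjectured equality is at least the edge side.

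For the upper bound on $\dim \cS$, I would proceed by induction on $n$. For the inductive step, pick a vertex $v \in [n]$ and consider the restriction map $\pi_v \colon \cS \to \cS_{G-v}$ obtained by deleting the $v$-th row and column, so that $\dim \cS = \dim \ker \pi_v + \dim \pi_v(\cS)$. The kernel is spanned by matrices supported only on edges incident to $v$, while the image $\pi_v(\cS)$ is a subspace of $\cS_{G-v}$ whose matrices still have controlled non-zero-eigenvalue count (possibly with a smaller bound $k'$). With a careful choice of $v$, one would bound each summand by the corresponding extremal parameter on $G-v$ and then combine them.

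The main obstacle is that de Seguins Pazzis's argument leverages conjugation by arbitrary elements of $\GL(n,\F)$ to place a generic matrix of $\cS$ in canonical (Jordan) form, and such conjugations destroy the graphical support structure on which $\cS \leq \cS_G$ depends. A purely combinatorial reduction that stays inside $\cS_G$ will therefore be needed. A promising avenue is to exploit the strongly connected component decomposition of $G$ (in view of \cref{thm:conn,thm:lambda}): if $G$ is not strongly connected, handle the components separately and combine; if $G$ is strongly connected, use the rich cycle structure to locate a vertex $v$ whose deletion strictly decreases the cycle-cover parameter while allowing tight control on $\pi_v(\cS)$. Schur-complement-style identities relating the characteristic polynomial of $B$ to that of $\pi_v(B)$ appear useful for aligning the linear-algebraic bound with the combinatorial one.

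A complementary route is induction on $|E(G)|$ by removing an edge $e$: split cases on whether every matrix of $\cS$ vanishes at $e$ (reducing directly to $G-e$) or some matrix of $\cS$ is non-zero at $e$ (quotient by a one-dimensional subspace to reduce to $G-e$ at the cost of one unit in the dimension count). Either variant of the induction ultimately rests on a combinatorial identity for the extremal parameter $f_k(G)$, and I expect the hardest step to be a sharp analysis of how the maximum edge count of such a subgraph behaves under vertex (or edge) deletion, paired with the control on how the non-zero-eigenvalue bound transforms under the corresponding linear-algebraic operation. Verifying that these two bounds match exactly, rather than up to a slack term, is the key combinatorial ingredient needed to close the induction.
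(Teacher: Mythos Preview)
This statement is listed in the paper as \emph{Conjecture}~\ref{conj:atkinson}, not as a theorem: the authors do not prove it, and in \cref{subsec: quantatitative} they explicitly say that they tried to push the de Seguins Pazzis argument through and ``there are certain difficulties that we are unable to overcome.'' So there is no paper proof to compare against; your proposal is an attempt at an open problem.

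Your outline correctly identifies the easy direction and the natural inductive shape, but it shares the same unresolved gap that blocked the authors. In the $k=0$ proof of \cref{thm:acyclic_subgraph}, the entire argument rests on \cref{lem: elementary basis adapted}: a nil space always admits an $\cS$-adapted standard basis vector $e_j$. That lemma is what makes the projection $M\mapsto A(M)$ injective on $\cT=\{M\in\cS:B(M)=0\}$, because any $M\in\cT$ with $A(M)=0$ would have column span contained in $\langle e_j\rangle$ and hence be zero. For $k\ge 1$ this mechanism breaks: a matrix supported only on the $j$th row (and the $(j,j)$ entry) has at most one non-zero eigenvalue, so such matrices are perfectly allowed in $\cS$, injectivity of the projection fails, and you lose the equality $\dim\cT=\dim A(\cT)$ that the dimension count needs. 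No analog of the $\cS$-adapted lemma is known for spaces with bounded non-zero-eigenvalue count, and you have not proposed one.

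Your alternative routes do not close this gap either. The strongly-connected-component reduction does not help in the base case where $G$ itself is strongly connected, which is exactly where the difficulty lies. The edge-deletion induction (``quotient by a one-dimensional subspace'') does not interact well with the eigenvalue condition: knowing that every matrix in $\cS$ has at most $k$ non-zero eigenvalues tells you nothing useful about a quotient $\cS/\langle M_0\rangle$, since the property is not linear. In short, your plan reproduces the approach the authors already attempted, and the step you flag as ``the hardest'' --- matching the combinatorial deletion identity to a linear-algebraic one without slack --- is precisely the missing idea. A genuine proof will need a new structural lemma replacing $\cS$-adaptedness for $k\ge 1$, not just a more careful execution of the existing scheme.
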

If true, \cref{conj:atkinson} would generalize a well-known theorem of Atkinson 
\cite{Atk80}, who proved that if $\F$ is sufficiently large and $S\leq \M(n,\F)$ 
is a matrix space in which every matrix has at most $k$ non-zero eigenvalues, then 
$\dim S \leq nk + \binom {n-k}2$. This is precisely the statement 
of~\cref{conj:atkinson} in case $G$ is a complete directed graph, i.e., 
\cref{conj:atkinson} is the restricted-support version of Atkinson's full-support 
theorem. Note too that~\cref{conj:atkinson} generalizes~\cref{thm:nil}, just as 
Atkinson's theorem generalizes Gerstenhaber's theorem.

Some algorithmic problems about matrix spaces are also worth studying. For example, 
\cref{thm:lambda} naturally leads to the problem of finding the maximum dimension 
over reducible subspaces of a matrix space. The corresponding graph-theoretic 
problem, namely the min-cut problem for directed graphs, can be solved in 
deterministic polynomial time \cite{BG08}. The submodular optimization algorithms 
over modular lattices \cite{HH21} may be relevant for this purpose. 

\section{Preliminaries}\label{sec: preliminary}

We collect basic notation. More definitions and notation will be introduced in 
each section. 

For $n\in\N$, $[n]:=\{1, 2, \dots, n\}$. 

\paragraph{Graphs.} A directed graph is $G=(V, E)$, where $V$ is the vertex set 
and $E\subseteq V\times V$ is the \emph{arc} set. We shall mostly working with 
directed 
graphs whose vertex set is $V=[n]$. 

A bipartite graph is $G=(L\cup R, E)$, where $L$ and $R$ are the left and right 
vertex sets, and $E\subseteq L\times R$ is the \emph{edge} set. We shall mostly 
working 
with bipartite graphs with $L=R=[n]$. 

An undirected graph is $G=(V, E)$, where $V$ is the vertex set and $E\subseteq 
\binom{V}{2}:=\{\{v, v'\} \mid v, v'\in V, v\neq v'\}$ is the \emph{edge} set. 

We say two graphs $G=([n], E)$ and $H=([n], F)$ are isomorphic if there is a bijection $f:[n]\to[n]$ such that $(i,j)\in E~\Leftrightarrow~(f(i),f(j))\in F$.

We denote $E^{\prime}$ being a subset of $E$ as $E^{\prime} \subseteq E$ and $E^{\prime}$ being a \emph{proper} subset of $E$ as $E^{\prime} \subset E$. A spanning subgraph $G'=(V,E')$ of $G=(V, E)$ has the same vertex set $V$ and edge 
set $E'\subseteq E$. Sometimes we call spanning subgraph as subgraph if there is 
no 
confusion. An induced subgraph $G[V']=(V',E')$ of $G=(V, E)$ has the vertex set 
$V'\subseteq V$, and $E'=\{(v,v')\in E\mid v,v'\in V'\}$. 

\paragraph{Vector spaces.} Let $\F$ be a field. Let $\F^n$ be the vector space of 
length-$n$ \emph{row} vectors. The $i$th standard basis vector of $\F^n$ is 
denoted $e_i$. 
We use $U\leq\F^n$ to denote that $U$ is a subspace of $\F^n$ (and use $<$ for \emph{proper} subspace). For a 
set of vectors $\{v_1,\dots,v_m\}$, let $\langle v_1,\dots,v_m\rangle\leq\F^n$ be their linear 
span.

\paragraph{Matrices.} Let $\M(n, \F)$ be the linear space of $n\times n$ matrices over $\F$. Given $(i, j)\in[n]\times [n]$, the elementary matrix $\E_{i,j}\in \M(n, \F)$ is the matrix with the $(i, j)$th entry being $1$, and other entries $0$. We use $I_n$ to denote the $n\times n$ identity matrix. 

For a matrix $B\in \M(n,\F)$, let $\rowspan(B)$ be the vector space spanned by 
the rows of $B$ and $\colspan(B)$ be the vector space spanned by the 
columns of $B$. We have $\rank(B)=\dim(\colspan(B))=\dim(\rowspan(B))$. A matrix 
$B\in \M(n,\F)$ is nilpotent, if $B^k=0$ for some $k\in\N$. For $i, 
j\in[n]$, $B(i, j)$ denotes the $(i, j)$th entry of $B$. 

For a matrix $B\in \M(n,\F)$ and a vector $v\in\F^n$, we usually consider the 
right action of $B$ on $v$, and denote the result as $vB$. The 
reason to use row vectors and matrices acting on the right is to be consistent 
with directed graphs: if we use $\E_{i,j}$ to represent the arc from $i$ to $j$, 
then the right action of $\E_{i,j}$ sends $e_i$ to $e_j$.
For a subspace $U\leq \F^n$, let $B(U):=\{uB\mid u\in U\}\leq \F^n$. 

\paragraph{Matrix tuples and matrix spaces.} A matrix tuple $\tS$ of length $m$ is 
an element of $\M(n, \F)^m$. A \emph{matrix space} $\cS$ is a linear subspace of 
$\M(n, \F)$. Given $\cS\leq\M(n, \F)$ and $R, T\in\M(n, \F)$, $R\cS T:=\{RST\mid 
S\in \cS\}\leq \M(n, \F)$. 
Two matrix spaces $\cS,\hat{\cS}\leq \M(n,\F)$ are \emph{conjugate} if 
there exists $T\in\GL(n,\F)$ such that $T\cS=\hat{\cS}T$. They are 
\emph{congruent} if there exists $T\in\GL(n,\F)$ such that 
$T\cS T^t=\hat{\cS}$.

\section{Matchings in bipartite graphs and ranks of matrices}
\label{sec:matching}

\subsection{The inherited correspondence}\label{subsec:ranks-inherited}

In this subsection, we prove~\cref{thm:non-matchable_subgraph}, establishing an inherited correspondence between the matching size in a bipartite graph and the rank of matrices in its graphical matrix space.

Let $G=([m]\cup[n],E)$ be a bipartite graph with $m\leq n$. 

A matching in $G$ is defined as a subset $M\subseteq 
E$ such that for any $(i_1,j_1),(i_2,j_2)\in M$, $i_1\neq i_2$ and $j_1\neq j_2$. 

Let $B\in\M(m\times n,\F)$ be the bipartite adjacency matrix of $G=([m]\cup[n],E)$, where $B(i,j)=1$ if and only if $(i,j)\in E$; and $B(i,j)=0$ otherwise. Let $\rho(B)$ be the minimum number of columns and rows which cover all nonzero entries of $B$. Then K\"onig's theorem, a fundamental result in graph theory, says that the matching number of $G$ equals $\rho(B)$ (e.g. see~\cite[Theorem 2.1.1]{Die17}).

For $(i_1,j_1),(i_2,j_2)\in [m] \times[n]$, the lexicographic order $\prec$ is 
defined as $(i_{1}, j_{1})\prec(i_{2}, j_{2})$ if and only if $i_{1}< i_{2}$ or 
$i_{1}=i_{2}$ and $j_{1}<j_{2}$.
For $A\in \M(m\times n,\F)$, let $p(A) =\min \{(i, j): A(i,j) \neq 0\}$ be 
the position of $A$'s lexicographically first non-zero entry. An important result 
in~\cite{Mes85} is the following:

\begin{lemma}[\hspace{1sp}{\cite[Theorem 1]{Mes85}}]\label{Meshulam}
    Given matrices $A_1,\dots,A_d\in \M(m\times n,\F)$, construct $B\in \M(m\times 
    n,\F)$ to have $1$ in all the positions $p(A_1),\dots,p(A_d)$ and zeros 
    elsewhere. Then $\langle A_1,\dots,A_d\rangle$ contains a matrix of rank at least
    $\rho(B)$.
\end{lemma}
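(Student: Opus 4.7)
The plan is to apply König's theorem to $B$ to locate a matching of size $r = \rho(B)$, and then to exhibit an explicit linear combination of the $A_{k_s}$'s corresponding to this matching whose restriction to a natural $r \times r$ submatrix is nonsingular. To make the argument field-independent, the coefficients will be found via Combinatorial Nullstellensatz. So first, by König's theorem applied to $B$, I would pick a matching $\{(i_1, j_1), \ldots, (i_r, j_r)\}$ of size $r = \rho(B)$ among the $1$-positions of $B$; for each $s \in [r]$, choose an index $k_s$ with $p(A_{k_s}) = (i_s, j_s)$. The $A_{k_s}$ are automatically distinct, since distinct matching positions force distinct $p(A_{k_s})$'s. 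After relabeling, I may assume $i_1 < i_2 < \cdots < i_r$, so that in particular $i_t < i_s$ whenever $s > t$.

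Next I would study the $r \times r$ submatrix $N(c)$ of the variable linear combination $A(c) := \sum_{s=1}^r c_s A_{k_s}$ indexed by rows $R = \{i_1, \ldots, i_r\}$ and columns $C = \{j_1, \ldots, j_r\}$. Since $\rank A(c) \ge \rank N(c)$, it suffices to exhibit $c \in \F^r$ with $\det N(c) \ne 0$. The key structural observation is that for $s > t$ the entire row $i_t$ of $A_{k_s}$ vanishes (since $(i_t, j) \prec (i_s, j_s) = p(A_{k_s})$ for every column $j$), so
\[
N(c)_{t, t'} = \sum_{s=1}^{t} c_s \, (A_{k_s})(i_t, j_{t'}),
\]
and $\det N(c)$ is a polynomial in $c_1, \ldots, c_r$ of total degree at most $r$.

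I would then extract the coefficient of the monomial $c_1 c_2 \cdots c_r$ in $\det N(c)$. Since the only sequence $(s_1, \ldots, s_r)$ with $s_t \le t$ that is a permutation of $[r]$ is the identity, this coefficient equals $\det M'$, where $M'_{t, t'} := (A_{k_t})(i_t, j_{t'})$. \emph{The main obstacle is showing $\det M' \ne 0$.} I would handle this by using that $p(A_{k_t}) = (i_t, j_t)$ forces $(A_{k_t})(i_t, j_{t'}) = 0$ whenever $j_{t'} < j_t$, while $(A_{k_t})(i_t, j_t) \ne 0$. Applying to both the rows and the columns of $M'$ the permutation that sorts $(j_1, \ldots, j_r)$ into increasing order then turns $M'$ into an upper-triangular matrix with nonzero diagonal, whence $\det M' \ne 0$.

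Finally, since the coefficient of $c_1 c_2 \cdots c_r$ in $\det N(c)$ is nonzero and each variable $c_s$ appears in that monomial with degree $1$, Alon's Combinatorial Nullstellensatz (requiring only $|\F| \ge 2$, which is automatic) supplies $c \in \F^r$ with $\det N(c) \ne 0$. Hence $A(c) \in \langle A_1, \ldots, A_d\rangle$ has rank at least $r = \rho(B)$, as required.
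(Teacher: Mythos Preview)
The paper does not give a proof of this lemma; it is quoted verbatim as \cite[Theorem 1]{Mes85} and used as a black box in the proof of \cref{thm:non-matchable_subgraph}. So there is no in-paper proof to compare against.

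Your argument is correct. The structural core --- sort the matching rows as $i_1<\cdots<i_r$, observe that row $i_t$ of $A_{k_s}$ vanishes whenever $s>t$, and hence the $r\times r$ minor $N(c)$ depends on $c_1,\dots,c_r$ in a ``lower-triangular'' way --- is exactly the right idea. The extraction of the $c_1c_2\cdots c_r$ coefficient via multilinearity, and the verification that the resulting matrix $M'$ becomes upper-triangular with nonzero diagonal after simultaneously permuting rows and columns by the sort of $(j_1,\dots,j_r)$, are both clean and correct. Your final appeal to the Combinatorial Nullstellensatz (needing only $|S_s|\ge 2$ since each $c_s$ has exponent $1$) is a neat way to make the conclusion field-independent; Meshulam's 1985 paper predates Alon's formulation of the Nullstellensatz, so his original argument handles the last step by a different device, but the triangular structure you isolate is the same engine.
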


Let $\mnss(G,r)$ be the maximum size of a spanning subgraph of $G$ whose matching number is at most $r$ and $\lsss(\cS_{G},r)$ be the largest dimension of a subspace of $\cS_{G}$ in which any matrix has rank at most $r$. We are ready to establish the following result.

\paragraph{\cref{thm:non-matchable_subgraph}, restated.}
    \textit{Let $G=([m]\cup [n], E)$ be a bipartite graph with $m\leq n$, and $\cS_G\leq\M(m\times n, 
    \F)$ be the graphical matrix space associated with $G$. Then for each $r\in[m]$, $\mnss(G, r)=\lsss(\cS_G, r)$.}

\begin{proof}
    To see $\mnss(G,r)\leq\lsss(\cS_{G},r)$, take a 
    maximum spanning subgraph $G'$ with matching number $\leq r$. By~\cref{thm:edm67}, we know that any matrix in $\cS_{G^{\prime}}$ has rank $\leq r$. Thus, $\mnss(G,r)=\dim(\cS_{G^{\prime}})\leq\lsss(\cS_{G},r)$.
    
     To see $\lsss(\cS_{G},r)\leq\mnss(G,r)$, take the 
     largest subspace $W$ of $\cS_{G}$ in which any matrix has rank $\leq r$. Let $d=\dim(W)$. Without loss of generality, we choose a basis $A_1,\dots,A_d$ of $W$ such that 
     $p(A_1),\dots,p(A_d)$ are all distinct. This can be done by Gaussian elimination (and viewing $A_1,\dots,A_d$ as $mn$-dimensional vectors). Construct $B$ as in~\cref{Meshulam} and let $H$ be the bipartite graph whose bipartite adjacency matrix is $B$. By~\cref{Meshulam}, $W=\langle A_1,\cdots,A_d\rangle$ contains a matrix of rank at least $\rho(B)$, which is the matching number of $H$. Thus, the matching number of $H$ cannot be greater than $r$, otherwise $W$ would contain a matrix of rank greater than $r$. Since $B$ has exactly $d$ nonzero entries, $H$ has exactly $d$ edges. Moreover, since each $A_i$ was supported on the edges of $G$, $B$ is supported on the edges of $G$ as well, and thus $H$ is a spanning subgraph of $G$. These imply that $\lsss(\cS_{G})\leq\mnss(G)$.
\end{proof}
\begin{remark}\label{remark: dfm thm}
\cref{thm:non-matchable_subgraph} can be seen as a generalization of the Dieudonn\'{e}--Flanders--Meshulam theorem~\cite{Die48,Fla62,Mes85}, which states that the largest dimension of a matrix space of $\M(m\times n,\F)$ ($m\leq n$) with bounded rank $r$ is $rn$. Taking $G$ to be the complete bipartite graph, then $\cS_G=\M(m\times n,\F)$. In this case, $\lsss(\cS_G,r)$ is given by the maximum size of spanning subgraphs of $G$ whose matching number is $r$, which is exactly $rn$. This recovers the Dieudonn\'{e}--Flanders--Meshulam theorem.
\end{remark}

\subsection{The induced correspondence}\label{subsec:ranks-induced}
In this subsection, we prove~\cref{thm:non-matchable_induced_subgraph}, establishing an induced correspondence between the matching size in a bipartite graph and the rank of matrices in its graphical matrix space.

Suppose we have $\cS\leq\M(m\times n, \F)$, $L\leq \F^m$, $R\leq\F^n$, $\dim(L)=s$ 
and $\dim(R)=t$. The \emph{order} of $\cS$ is $m+n$. Let $T_L$ (resp.\ $T_R$) be an $s\times m$ (resp.\ $t\times n$) 
matrix whose rows span $L$ (resp.\ $R$). Define $\cS[L, R]:=\{T_LBT_R^t\mid B\in 
\cS\}$ be the induced subspace of $\cS$ with respect to $L$ and $R$. 

Let $r\in \N$. For a graph $G$, let $\MaxBdMatOrder(G, r)$ be the maximum order 
over induced subgraphs of $G$ with matching number at most $r$. For a matrix 
space $\cS$, let $\MaxBdRankOrder(\cS, r)$ be the maximum order over induced 
subspaces of $\cS$ with maximum rank at most $r$. 

\paragraph{\cref{thm:non-matchable_induced_subgraph}, restated.}
\textit{Let $G=([m]\cup [n], E)$ be a bipartite graph with $m\leq n$, and $\cS_G\leq\M(m\times n, \F)$ be the graphical matrix space associated with $G$. Then for each $r\in[m]$, $\MaxBdMatOrder(G, r)=\MaxBdRankOrder(\cS_G, r)$.}
\begin{proof}
	To see $\MaxBdMatOrder(G, r)\leq\MaxBdRankOrder(\cS_G, r)$, let $V_1\cup V_2$ where $V_{1}\subseteq[m]$ and $V_{2}\subseteq[n]$ be vertex subsets of maximum order such that the matching number of $G[V_1\cup V_2]$ is at most $r$. Take $L=\langle e_i\mid i\in V_{1}\rangle\leq \F^m$ and $R=\langle e_i\mid i\in V_{2}\rangle\leq \F^n$. Also take an $s\times m$ matrix $T_{L}$ whose rows are exactly $\{e_i\mid i\in V_{1}\}$ and a $t\times n$ matrix $T_{R}$ whose rows are exactly $\{e_i\mid i\in V_{2}\}$. By~\cref{thm:edm67}, any matrix in $\cS_G[L,R]$ is of rank at most $r$, as $\cS_G[L,R]=\cS_{G[V_{1}\cup V_{2}]}$ and any matching of $G[V_{1}\cup V_{2}]$ has size at most $r$.

	To see $\MaxBdRankOrder(\cS_G, r)\leq\MaxBdMatOrder(G, r)$, let $L\leq\F^m$ with $\dim(L)=s$, $R\leq\F^n$ with $\dim(R)=t$, and $\dim(L)+\dim(R)=s+t=\operatorname{\MaxBdRankOrder}(\cS_G, r)$ such that $\cS_G[L,R]$ is an induced subspace of $\cS_G$ in which any matrix has rank at most $r$. Let $T_L$ be an $s\times m$ matrix whose rows span $L$ and $T_R$ be a $t\times n$ matrix whose rows span $R$. Then there exists $V_1\subseteq [m]$ (resp.\ $V_2\subseteq[n]$) such that the columns in $T_L$ (resp.\ $T_R$) with indices in $V_1$ (resp.\ $V_2$) are linearly independent. 
    We want to show that the matching number of $G[V_1\cup V_2]$ is at most $r$. Let $A$ (resp.\ $B$) be the $s\times s$ (resp.\ $t\times t$) submatrix of $T_L$ (resp.\ $T_R$) with column indices in $V_1$ (resp.\ $V_2$). This ensures that $A$ and $B$ are both full-rank.

    Now we claim that $A\cS_{G[V_1\cup V_2]}B^t$ is a subspace of $\cS_G[L, R]$. To see this, we denote by $E_L$ (resp.\ $E_R$) the $m\times s$ (resp.\ $n \times t$)  matrix formed of columns $\{e_i^t:i\in V_1\}$ (resp.\ $\{e_i^t:i\in V_2\}$) such that $A = T_LE_L$ (resp.\ $B^t = E_R^t T_R^t$). Note that $\cS_{G[V_1\cup V_2]}=E_L^t\cS_GE_R$ and $A\cS_{G[V_1\cup V_2]}B^t=T_LE_LE_L^t \cS_G E_R E_R^t T_R^t$. Since $E_LE_L^t$ and $E_RE_R^t$ are both diagonal, we have that $E_L E_L^t \cS_G E_R E_R^t \leq \cS_G$, implying $A\cS_{G[V_1\cup V_2]}B^t \leq T_L \cS_G T_R^t = \cS_G[L, R]$.
    It follows that the maximal rank of matrices in $\cS_{G[V_1\cup 
    V_2]}$ is at most the maximal rank of matrices in $\cS_G[L, R]$, which is at most $r$. By~\cref{thm:edm67}, $G[V_1\cup V_2]$ is an induced subgraph of order-$(s+t)$ whose matching number is at most $r$, which concludes the proof.
\end{proof}

\section{Cycles in directed graphs and nilpotent matrices}
\label{sec:nil}

In this section, we prove correspondences between cycles in directed graphs and nilpotency of matrices in their associated graphical matrix spaces. We begin with the basic correspondences, then prove the inherited correspondence in~\cref{subsec:acyclic-inherited} and the induced correspondence in~\cref{sec: vertex feedback}.

\subsection{The basic correspondences}

\subsubsection{Maximum walk lengths and nil/nilpotent indices}

Let $G=([n], E)$ be a directed graph. A \emph{walk of length $k$} in $G$ is a 
sequence of $k+1$ vertices $v_0,v_1,\dots,v_k\in[n]$, where $(v_{i-1},v_{i})\in E$ 
for all $i\in[k]$. We call $v_0$ and $v_k$ the starting and ending vertices, 
respectively, and $v_1,\dots,v_{k-1}$ are called the intermediate vertices. 
A \emph{path} in $G$ is a walk with no repeated vertices. 
A \emph{cycle} in $G$ is a path $v_0,\dots,v_k$ with $v_0=v_k$. We 
say that $G$ is \emph{cyclic} if there exists a cycle in $G$, and \emph{acyclic} 
otherwise.

Let 
$\mwl(G)$ be the maximum length of walks in a directed graph $G=([n], E)$. If $G$ is cyclic, define
$\mwl(G)=\infty$. From a walk of length $n$ in $G$ we can obtain a cycle 
in $G$. Thus for an acyclic graph $G$, $\mwl(G)\leq n-1$. Moreover, if $G$ is 
acyclic, every walk in $G$ is a path. We shall relate cycles in directed graphs with nilpotent matrices in matrix 
spaces. 
\begin{definition}
A matrix space $\cS\leq\M(n, \F)$ is \emph{nil}, if any $B\in \cS$ is a nilpotent matrix. The \emph{nil index} of $\cS$, $\nil(\cS)$, is defined as the smallest integer $k$ such that $B^k=0$ for any $B\in\cS$. If $\cS$ is not nil, then $\nil(\cS)=\infty$.

A matrix space $\cS\leq\M(n, \F)$ is \emph{nilpotent}, if there exists $k\in \N$, such that for any $B_1, 
\dots, B_k\in \cS$, $\prod_{i\in[k]}B_i=0$. The smallest such $k$ is called the \emph{nilpotent index} of $\cS$, denoted by $\nilp(\cS)$. If $\cS$ is not nilpotent, then $\nilp(\cS)=\infty$.
\end{definition}
Clearly, nilpotent matrix spaces are also nil. Thus $\nil(\cS)\leq\nilp(\cS)$ for 
any matrix space $\cS$. The converse is false; for example, it is easy to verify that the space $\left\{\smat{0&x&0\\y&0&x\\0&-y&0}:x,y \in \F\right\}$ is nil but not nilpotent.

The following theorem establishes the basic correspondence between the properties of 
being acyclic, nilpotent, and nil. In particular, it recovers~\cref{thm:nil} in~\cref{subsubsec:nil}.
\begin{theorem}\label{thm:acyclicity}
Let $G=([n], E)$ be a directed graph, and let $\cS_G\leq\M(n, \F)$ be the graphical matrix 
space associated with $G$. Then the following are equivalent: 
\begin{enumerate}
\item[$(1)$] $G$ is acyclic;
\item[$(2)$] $\cS_G$ is nilpotent;
\item[$(3)$] $\cS_G$ is nil.
\end{enumerate}
Furthermore, we have $\mwl(G)+1=\nil(\cS_G)=\nilp(\cS_G)$. 
\end{theorem}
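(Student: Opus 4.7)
The plan is to prove the quantitative equality $\mwl(G)+1 = \nil(\cS_G) = \nilp(\cS_G)$ in full generality, from which the three equivalences follow immediately: $G$ is acyclic iff $\mwl(G) < \infty$, $\cS_G$ is nilpotent iff $\nilp(\cS_G) < \infty$, and $\cS_G$ is nil iff $\nil(\cS_G) < \infty$, with the natural convention $\infty+1 = \infty$.

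The first step is to translate powers of $\cS_G$ into walks in $G$. Using the multiplication rule $\E_{i,j}\E_{i',j'} = \delta_{j,i'}\E_{i,j'}$, a product $\E_{i_1,j_1}\E_{i_2,j_2}\cdots \E_{i_k,j_k}$ is nonzero precisely when $j_t = i_{t+1}$ for every $t$, in which case it equals $\E_{i_1,j_k}$ and the sequence $i_1, i_2, \dots, i_k, j_k$ is a walk of length $k$ in $G$. Expanding an arbitrary product $B_1 \cdots B_k$ with $B_i \in \cS_G$ in the elementary-matrix basis therefore shows that the linear span of all such products is spanned by those $\E_{a,b}$ for which a walk of length $k$ from $a$ to $b$ exists in $G$. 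In particular, this span is zero iff $G$ contains no walk of length $k$, and taking the smallest such $k$ yields $\nilp(\cS_G) = \mwl(G)+1$ directly.

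The remaining task is $\nil(\cS_G) = \mwl(G)+1$. The inequality $\nil(\cS_G) \le \nilp(\cS_G)$ is automatic, so I only need $\nil(\cS_G) \ge \mwl(G)+1$, for which it suffices to exhibit a single element of $\cS_G$ with the required non-nilpotence. If $G$ contains a cycle $v_1 \to v_2 \to \cdots \to v_\ell \to v_1$, I take $B = \sum_{t=1}^{\ell} \E_{v_t,v_{t+1}}$ (indices mod $\ell$); a direct computation using $e_{v_s} B = e_{v_{s+1}}$ shows that $B^\ell$ has a $1$ in each diagonal position $(v_s,v_s)$, so $B$ is not nilpotent and $\nil(\cS_G) = \infty = \mwl(G)+1$. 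If instead $\mwl(G) = m$ is finite, then $G$ is acyclic (any walk of length $\ge n$ must repeat a vertex and hence contain a cycle), and a longest walk $v_0 v_1 \cdots v_m$ necessarily has pairwise distinct vertices. Setting $B = \sum_{t=0}^{m-1} \E_{v_t,v_{t+1}}$, the distinctness of the $v_t$ forces the expansion of $B^m$ to collapse to the single nonzero term $\E_{v_0,v_m}$, giving $B^m \ne 0$ and $\nil(\cS_G) \ge m+1$.

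The step requiring the most care is the cyclic case: one must rule out characteristic-dependent cancellations that might make the cycle-summand $B$ accidentally nilpotent. I bypass this pitfall by the direct structural computation of $B^\ell$ on the basis vectors supported on the cycle, an argument that is insensitive to the underlying field. Everything else is routine bookkeeping with the basis $\{\E_{i,j} : (i,j) \in E\}$.
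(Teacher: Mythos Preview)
Your proof is correct and follows essentially the same approach as the paper: both use the elementary-matrix multiplication rule $\E_{i,j}\E_{i',j'}=\delta_{j,i'}\E_{i,j'}$ to identify products with walks for the $\nilp$ side, and both exhibit an explicit matrix supported on a cycle (respectively, a longest path) to lower-bound $\nil$. The only organizational difference is that the paper first proves the qualitative equivalences $(1)\Leftrightarrow(2)\Leftrightarrow(3)$ (using a topological sort for $(1)\Rightarrow(2)$) and then the quantitative equality in the acyclic case, whereas you go straight for the quantitative equality in all cases and read off the equivalences; this is a mild streamlining, not a different argument.
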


\begin{proof}
\paragraph{(1) $\Rightarrow$ (2):} Suppose $G$ is acyclic. This implies that $G$ has a topological sort, that is, a linear ordering of the vertices such that all edges are directed from left to right. Thus, there exists a 
permutation matrix $P$, such that $P^{-1}\cS_G P$ is strictly 
upper-triangular. Then $P^{-1} \cS_GP$ is nilpotent, which implies that $\cS_G$ is 
also nilpotent. 

\paragraph{(2) $\Rightarrow$ (3):} This is straightforward by their definitions.

\paragraph{(3) $\Rightarrow$ (1):} We show that if $G$ is cyclic, then $\cS_G$ is 
not nil. Up to relabeling the vertices, suppose the vertex sequence 
$(1,2,3,\dots,k,1)$ forms a cycle in $G$, for some $k\geq 1$. Then the following 
matrix is in $\cS_G$: $B=\begin{bmatrix}B_0 & 0\\ 0 & 0\end{bmatrix}$ where 
$B_0\in 
\M(k,\F)$ is 
\[
B_0=\begin{bmatrix}
0& 1 &  &  & \\
& 0 & 1 &  & \\
&  & \ddots & \ddots & \\
&  &  & 0 & 1\\
1 &  &  &  & 0
\end{bmatrix}_{k\times k}.
\]
Clearly, $B_0$ is not nilpotent. It follows that $B$ is not nilpotent and $\cS_G$ is not nil.

The above shows that if $G$ has a cycle, then $\mwl(G)$, $\nil(\cS_G)$ and $\nilp(\cS_G)$ are all infinite. Thus $\mwl(G)+1=\nil(\cS_G)=\nilp(\cS_G)$ holds whenever $G$ is cyclic.

We now prove that $\mwl(G)+1=\nilp(\cS_G)=\nil(\cS_G)$ when $G$ is acyclic. In this case, $\mwl(G)$, $\nil(\cS_G)$, and $\nilp(\cS_G)$ are finite. 

\paragraph{For $\mwl(G)+1\geq \nilp(\cS_G)$.} Let $r=\mwl(G)$. Note that 
for a sequence of $r+1$ elementary matrices, their product is non-zero if and only 
if their indices form a walk of length $r+1$. Since there exist no walks of length 
greater than $r$ in $G$, any product of a 
sequence of $r+1$ such elementary matrices, which come from a linear basis of $\cS_G$, must be 
zero. It follows 
that $r+1\geq \nilp(\cS_G)$.

\paragraph{For $\nilp(\cS_G)\geq \nil(\cS_G)$.} This is straightforward by their 
definitions.

\paragraph{For $\nil(\cS_G)\geq \mwl(G)+1$.} Let $r=\mwl(G)$. We construct a 
matrix $B\in \cS_G$, such that $B^r\neq 0$, which would imply that 
$\nil(\cS_G)\geq r+1$. Since $r$ is finite, every walk in $G$ is a path. By 
relabeling the vertices, suppose $(1, 2, \dots, r+1)$ is a path of length $r$ in 
$G$. 
Then the following 
matrix is in $\cS_G$: $B=\begin{bmatrix}B_0 & 0\\ 0 & 0\end{bmatrix}$ where 
$B_0\in 
\M(r+1,\F)$ is 
\[
B_0=\begin{bmatrix}
0& 1 &  &  & \\
& 0 & 1 &  & \\
&  & \ddots & \ddots & \\
&  &  & 0 & 1\\
 &  &  &  & 0
\end{bmatrix}_{(r+1)\times (r+1)}.
\]
Clearly, $B_0^r$ is non-zero, so $B^r$ is non-zero. We then obtain the desired 
$B$, concluding the proof.
\end{proof}

\begin{remark}
Note that $G$ being acyclic implies that the adjacency matrix $A_G$ of $G$ is 
nilpotent. The reverse direction only holds when $A_G$ is defined over 
fields of appropriate characteristics. For instance, let $G$ be the $n$-vertex complete directed graph with self-loops, 
whose adjacency matrix is the $n \times n$ all-one matrix. Then $A_G$ is nilpotent over any field of characteristic dividing $n$, although $G$ is cyclic. On the other hand,~\cref{thm:acyclicity} holds over any field. 
\end{remark}

\subsubsection{Cycle covers and the number of zero eigenvalues}\label{subsec: quantatitative}

Let $G$ be a directed graph of order $n$, and $r\in\{0\}\cup[n]$. We say $G$ 
is \emph{$r$-acyclic} if any collection of vertex-disjoint cycles of $G$ covers at most $n-r$ vertices. Note that (1) $r=0$ corresponds to $G$ having a cycle cover; and (2) $r=n$ corresponds to $G$ being acyclic. The following basic correspondence generalizes~\cref{thm:acyclicity}.
\begin{theorem}\label{thm:eigenval}
    Let $G=([n],E)$ be a directed graph and $\cS_G\leq \M(n,\F)$ be the associated 
    graphical matrix space. Then $G$ is $r$-acyclic if and only if every matrix 
    $B\in\cS_G$ has at least $r$ zero eigenvalues.
\end{theorem}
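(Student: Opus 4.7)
The plan is to translate both sides of the equivalence into statements about the coefficients of characteristic polynomials of matrices in $\cS_G$, and then to read those coefficients off via the Leibniz formula as signed counts of vertex-disjoint cycle configurations in $G$.

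First I would observe that for any $B\in\M(n,\F)$, the characteristic polynomial expands as
\[
\det(xI-B)\;=\;\sum_{k=0}^{n}(-1)^{k}e_{k}(B)\,x^{\,n-k},
\]
where $e_{k}(B)=\sum_{I\subseteq[n],\,|I|=k}\det B[I,I]$ is the sum of all $k\times k$ principal minors of $B$ (with $e_{0}=1$). The algebraic multiplicity of $0$ as an eigenvalue of $B$ equals $n$ minus the largest $k$ for which $e_{k}(B)\neq 0$. Consequently, $B$ has at least $r$ zero eigenvalues if and only if $e_{k}(B)=0$ for every $k>n-r$. So the theorem reduces to showing, for each $k$, that $e_{k}$ vanishes identically on $\cS_{G}$ exactly when $G$ contains no collection of vertex-disjoint cycles covering $k$ vertices.

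Next, expanding each principal minor by the Leibniz formula gives
\[
e_{k}(B)\;=\;\sum_{\substack{I\subseteq[n]\\|I|=k}}\sum_{\sigma\in\operatorname{Sym}(I)}\sgn(\sigma)\prod_{i\in I}B(i,\sigma(i)).
\]
Viewing $B\in\cS_{G}$ as a generic symbolic matrix whose free variables are indexed by the edges of $E$, each nonzero term corresponds to a pair $(I,\sigma)$ with $(i,\sigma(i))\in E$ for all $i\in I$, i.e.\ to a collection of vertex-disjoint cycles in $G$ that covers exactly the set $I$. The key observation is that a permutation is recovered from its set of arrows, so distinct pairs $(I,\sigma)$ contribute distinct monomials; no cancellation can occur between different cycle configurations. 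Therefore, as a polynomial in the edge variables, $e_{k}|_{\cS_{G}}$ is the zero polynomial if and only if $G$ has no vertex-disjoint cycle configuration covering $k$ vertices.

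The forward direction is now immediate: if $G$ is $r$-acyclic then for every $k>n-r$ the polynomial $e_{k}|_{\cS_{G}}$ is identically zero, hence every $B\in\cS_{G}$ has at least $r$ zero eigenvalues. For the converse I would exhibit a witness. Suppose $G$ has a collection $\cC$ of vertex-disjoint cycles with vertex set $I$ of size $k^{\ast}>n-r$, and let $B_{0}\in\cS_{G}$ be the $0/1$ matrix supported exactly on the arcs of $\cC$. In the Leibniz expansion of $e_{k^{\ast}}(B_{0})$, a term $(J,\tau)$ survives only when the arrows of $\tau$ lie in $\cC$; since $\cC$ is itself a permutation on $I$, this forces $J=I$ and $\tau$ to be the permutation induced by $\cC$. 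Hence exactly one monomial survives, with value $\sgn(\tau)=\pm 1\neq 0$ in any field, so $B_{0}$ has at most $n-k^{\ast}<r$ zero eigenvalues. The main obstacle in the write-up is the non-cancellation claim above; once that is phrased clearly (distinct permutations give distinct arrow multisets, hence distinct monomials in the edge variables), both the identical vanishing and the explicit witness become routine.
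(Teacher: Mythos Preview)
Your proof is correct and follows the same overall architecture as the paper's: reduce to showing that the coefficients $e_{k}(B)$ (sums of $k\times k$ principal minors) vanish for all $k>n-r$, and for the converse exhibit the $0/1$ matrix supported on a large cycle collection. The one substantive difference is in how the vanishing of the principal minors is established. The paper argues, for each fixed $I$ of size $k>n-r$, that $G[I]$ has no cycle cover, hence the associated bipartite graph on $I\times I$ has no perfect matching, and then invokes the Edmonds correspondence (\cref{thm:edm67}) to conclude $\det B[I,I]=0$ for every $B\in\cS_{G}$. You instead expand $e_{k}$ as a single polynomial in the edge variables via Leibniz and observe that distinct pairs $(I,\sigma)$ yield distinct squarefree monomials (since the arrow set $\{(i,\sigma(i)):i\in I\}$ recovers both $I$ and $\sigma$), so no cancellation occurs and the polynomial is formally zero iff there is no size-$k$ cycle configuration. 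Your route is self-contained and avoids the detour through bipartite matchings; the paper's route is more modular and ties the result back into the matching/rank correspondence developed earlier. Either way the argument is elementary and field-independent.
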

\begin{proof}
We first show that if every matrix $B\in\cS_G$ has at least $r$ zero 
eigenvalues, then $G$ is $r$-acyclic. If $G$ is not $r$-acyclic, we can find 
disjoint cycles 
$C_1,\dots,C_\ell$ 
    which cover $k\geq n-r+1$ vertices. Let $B\in\cS_G$ be the matrix whose $(i,j)$th 
    entry is $1$ if $(i,j)$ is an edge in any of the cycles $C_1,\dots,C_\ell$ and $0$ 
    elsewhere. Since $C_1,\dots,C_\ell$ are disjoint, $B$ can be written as the direct sum of the adjacency matrices of $C_1,\dots,C_\ell$ (after a suitable relabeling of the vertices). Then $B$ has $(n-k)$ zero eigenvalues, where $n-k\leq r-1$, a 
    contradiction. 
    
    We then show that if $G$ is $r$-acyclic, then every $B\in\cS_G$ has at 
    least $r$ zero eigenvalues. Note that any matrix $B\in\M(n,\F)$ having at least $r$ zero eigenvalues is equivalent to that the characteristic 
    polynomial $P_B(x)$ of $B$ is of the form $x^{r}Q_B(x)$ for some polynomial 
    $Q_B(x)$ of degree $(n-r)$. On the other hand, write the characteristic polynomial of 
    $B$ as
    \[
    P_B(x)=x^n+(-1)E_1(B)x^{n-1}+\cdots+(-1)^{n-1}E_{n-1}(B)x+(-1)^{n}E_{n}(B),
    \]
    where for $k\in[n]$, $E_k(B)$ is the sum of all $k\times k$ principal minors 
    of $B$ (see e.g.~\cite[Eq. (1.2.13)]{horn_johnson_1985}). To show $B$ 
    has at least $r$ 
    zero eigenvalues, we need to prove that $E_{k}(B)=0$ for all $k\geq n-r+1$. 
    
    Fix $k\geq n-r+1$, and let $I\subseteq [n]$ be a set of size $k$. Let 
    $B_I$ be the principal submatrix of $B$ indexed by $I\times I$ and 
    $\cS[I]=\{B_I\mid B\in\cS\}$. Let $G[I]=(I,E[I])$ be the induced subgraph of $G$ on 
    $I$, where $E[I]=\{(i,j)\in E\mid i,j\in I\}$. Observe that $\cS_G[I] =\cS_{G[I]}\leq 
    \M(k,\F)$. Since $G$ is $r$-acyclic, then for any $I$, $G[I]$ cannot be covered by disjoint cycles, implying the bipartite graph corresponding to $G[I]$, i.e., $(I\times I, E[I])$, does not have a perfect matching. By~\cref{thm:edm67}, we have every matrix in $\cS_{G[I]}$ has rank less than $k$. This shows that every matrix $B_I\in\cS_G[I]=\cS_{G[I]}$ has a zero determinant. Further note that 
    \[
    E_k(B)=\sum_{I\subseteq[n],~|I|=k}\det(B_I).
    \]
    This implies that $E_k(B)=0$ for any $k\geq n-r+1$ and any $B\in\cS_G$. Thus we 
    conclude that $B\in\cS_G$ has at least $r$ zero eigenvalues.
\end{proof}

Starting from a directed graph $G=([n],E)$, we can construct a bipartite graph $B(G)=([n]\times [n],E)$. It is not hard to see that $G$ can be covered by disjoint cycles if and only if $B(G)$ has perfect matchings (which we have used in the second half of the proof).~\cref{thm:eigenval} then can be thought of as a directed graph version of~\cref{thm:edm67}. The difference (of the statements) is caused by the different underlying symmetries of directed graphs and bipartite graphs: although $G$ and $B(G)$ have the same set of arcs (edges), we may allow different permutations on the left and right vertices in $B(G)$, while there is only one permutation on the vertices of $G$. Reflecting this fact on the matrix space side, although the directed graph $G$ and the bipartite graph $B(G)$ share the same matrix space $\cS_G$, the symmetries of $G$ and $B(G)$ induce conjugation action and left-right action on $\cS_G$, respectively. Note that the number of zero eigenvalues of $B\in\cS_G$, which corresponds to the algebraic multiplicity of the zero eigenvalue of $B\in\cS_G$, is invariant under conjugation action; and the rank of $B\in\cS_G$, which corresponds to the geometric multiplicity of the zero eigenvalue of $B\in\cS_G$, is invariant under left-right action. This justifies the difference between~\cref{thm:eigenval,thm:edm67}. 

We remark here again that we expect the basic correspondence in~\cref{thm:eigenval} to yield an inherited correspondence, as stated in~\cref{conj:atkinson}. However, at the moment, we are unable to prove this. We believe that a proof technique like that of~\cref{thm:acyclic_subgraph} (proved below) may suffice to prove~\cref{conj:atkinson}, but there are certain difficulties that we are unable to overcome.

\subsection{The inherited correspondence}\label{subsec:acyclic-inherited}

For a directed graph $G=([n],E)$, let $\mas(G)$ be the maximum size of an acyclic spanning
subgraph of $G$ and let $\lns(\cS_G)$
be the largest dimension of a nil subspace of $\cS_G$. 
For any subspace $\cS$ of $\M(n,\F)$, its \emph{supporting (directed) graph} 
$H=([n],E)$ is defined as follows: $(i,j)\in E$ if there is $A\in \cS$ such that 
$A(i,j)\neq 0$. For two matrix spaces $\cS,\cT\leq \M(n,\F)$, $\cS\leq \cT$ implies the
corresponding supporting graph $H_\cS$ is a spanning subgraph of $H_\cT$.

The following notion was introduced by de Seguins Pazzis in~\cite{Paz13}.
\begin{definition}
For a nil matrix space $\cS\leq \M(n,\F)$, we say a non-zero column vector $v$ is 
$\cS$-adapted, if for any $X\in \cS$, $\colspan(X)\neq\langle v\rangle$.  
\end{definition}
Roughly speaking, the notion of an $\cS$-adapted vector is analogous to the notion 
of a sink in a directed graph. Indeed, if vertex $j$ in a directed graph $G$ is a 
sink, then $j$ has no out-neighbors, which means that the $j$th row of the 
adjacency matrix of $G$ contains only zeros. Similarly, if the standard column basis 
vector $e_j$ is $\cS$-adapted, then no non-zero element of $\cS$ is supported in the 
$j$th row of $\cS$.

An important result in \cite{Paz13} is the following. 
Both the statement and the proof can be seen as analogs of the basic fact that 
an acyclic directed graph has a sink.
\begin{lemma}[\hspace{1sp}{\cite[Lemma 5]{Paz13}}]\label{lem: elementary 
basis adapted}
For any nil subspace $\cS\leq \M(n,\F)$, there is some $j\in[n]$ such that the $j$th column standard basis vector $e_j$ is 
$\cS$-adapted.
\end{lemma}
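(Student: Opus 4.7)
My plan is to argue by contradiction: assume no standard basis vector $e_j$ is $\cS$-adapted, and construct a matrix in $\cS$ that is not nilpotent. The strategy parallels the classical fact that every acyclic directed graph has a sink --- failure of the ``sink-like'' condition produces a directed cycle, which I will realize inside a single matrix of $\cS$.

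If no $e_j$ is $\cS$-adapted, then for each $j \in [n]$ I pick $X_j \in \cS$ with $\colspan(X_j) = \langle e_j \rangle$. Such an $X_j$ is supported only in its $j$-th row, whose entries form a non-zero row vector $v_j$. A one-line computation gives $X_j^2 = (v_j)_j \cdot X_j$, so the nilpotency of $X_j$ forces $(v_j)_j = 0$. Assembling these rows into an $n \times n$ matrix $V$ whose $j$-th row is $v_j$, the diagonal of $V$ is zero, and I consider the directed graph $H$ on $[n]$ with arc $i \to j$ whenever $V_{ij} \neq 0$. Because each $v_j$ is non-zero yet vanishes in position $j$, every vertex of $H$ has positive out-degree, so $H$ contains a directed cycle.

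I then take a \emph{shortest} directed cycle in $H$; after relabelling, call its vertices $1, 2, \ldots, \ell$ in cyclic order and set $\beta_i := V_{i,\, i+1 \bmod \ell} \neq 0$. The key graph-theoretic observation is that such a shortest cycle is chord-free: any additional arc $i \to j$ within $\{1, \ldots, \ell\}$, combined with the appropriate stretch of the original cycle, would produce a strictly shorter cycle. Consequently the principal $\ell \times \ell$ submatrix $V'$ of $V$ indexed by $\{1, \ldots, \ell\}$ is a weighted cyclic permutation, so $\det(V') = \pm\, \beta_1 \beta_2 \cdots \beta_\ell \neq 0$.

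Finally, set $X := X_1 + X_2 + \cdots + X_\ell \in \cS$. Its only non-zero rows are those indexed by the cycle, and they coincide with the corresponding rows of $V$. Writing the coefficient of $\lambda^{n-\ell}$ in the characteristic polynomial of $X$ as a sum (up to sign) of $\ell \times \ell$ principal minors of $X$, any index set $S$ of size $\ell$ other than $\{1, \ldots, \ell\}$ contributes zero because the corresponding submatrix has an entirely zero row; the unique surviving term equals $\det(V') \neq 0$. Hence the characteristic polynomial of $X$ is not $\lambda^n$, so $X$ is not nilpotent --- contradicting the assumption that $\cS$ is nil. The only step demanding real care is the chord-freeness of a shortest cycle; the rest is a direct principal-minor computation valid in any characteristic.
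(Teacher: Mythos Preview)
Your proof is correct. The paper does not supply its own proof of this lemma; it simply cites de Seguins Pazzis \cite{Paz13}. Your argument is self-contained and works over any field: the reduction to rank-one matrices $X_j$ supported on row $j$, the observation that $(v_j)_j=0$ from $X_j^2=(v_j)_jX_j$, the extraction of a shortest (hence chord-free) cycle from the auxiliary digraph, and the principal-minor computation for the coefficient of $\lambda^{n-\ell}$ in $\det(\lambda I - X)$ are all valid. In particular, your use of a \emph{shortest} cycle is exactly the right move to guarantee that the principal $\ell\times\ell$ block is a weighted cyclic permutation with non-zero determinant, and the vanishing of all other $\ell\times\ell$ principal minors (because some row indexed outside $\{1,\dots,\ell\}$ is zero) is immediate. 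This realizes precisely the analogy the paper mentions---that the lemma is the matrix-space version of ``every acyclic digraph has a sink''---by literally building the digraph and finding the cycle.
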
 
We utilize this lemma to prove the following:
\begin{lemma}\label{lem: nil to acyclic graph}
For any nil subspace $\cS\leq \M(n,\F)$ with supporting graph $H=([n],F)$, there is 
an acyclic spanning subgraph $H'=([n],F')$ of $H$ with $\dim(\cS)\leq|F'|$.
\end{lemma}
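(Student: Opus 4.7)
The plan is to induct on $n$, with the base case $n=1$ being immediate: the only nilpotent $1\times 1$ matrix is zero, so $\cS=\{0\}$ and the empty spanning subgraph satisfies $\dim\cS=0\leq 0=|F'|$.

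For the inductive step, I would first invoke~\cref{lem: elementary basis adapted} to obtain some $j\in[n]$ for which the standard column basis vector $e_j$ is $\cS$-adapted; after relabeling, assume $j=n$. The key reduction is to pass to the subspace $\cS'\leq\cS$ consisting of those matrices whose $n$-th column vanishes. Writing a general element of $\cS'$ in block form as $X=\smat{A & 0\\ c & 0}$ with $A\in\M(n-1,\F)$ and $c\in\F^{n-1}$, I would project via $X\mapsto A$ to obtain a subspace $\cS_A\leq\M(n-1,\F)$. The $\cS$-adaptedness of $e_n$ guarantees that the kernel of this projection---which consists of those matrices in $\cS$ supported only in row $n$---is trivial, so $\dim\cS'=\dim\cS_A$. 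A direct computation $X^k=\smat{A^k & 0\\ cA^{k-1} & 0}$ shows that $X$ is nilpotent exactly when $A$ is, so $\cS_A$ is nil. The inductive hypothesis then produces an acyclic spanning subgraph $H'_A$ of the supporting graph of $\cS_A$ on vertex set $[n-1]$ with $|E(H'_A)|\geq \dim\cS_A$; since that supporting graph is contained in the subgraph of $H$ induced on $[n-1]$, $H'_A$ can be viewed as an acyclic subgraph of $H$ on $[n-1]$.

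Finally I would assemble $H'$ on vertex set $[n]$ by taking $E(H'_A)$ together with every in-edge of $n$ in $H$. Counting gives $|E(H')|=|E(H'_A)|+\indeg_H(n)\geq\dim\cS_A+\indeg_H(n)\geq\dim\cS$, where the last inequality uses that the codimension of $\cS'$ in $\cS$ is at most $\indeg_H(n)$ (the ``extract $n$-th column'' map $\cS\to\F^n$ lands in the span of the standard basis vectors indexed by in-neighbors of $n$ in $H$). Acyclicity of $H'$ is clear because $n$ is a sink: all edges added in this last step point into $n$, and $H'_A$ does not involve $n$ at all, so no directed cycle can use the vertex $n$. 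The main obstacle is choosing the right reduction so that dimension and nilpotency are simultaneously preserved; zeroing out column $n$ (rather than row $n$) is essential, because it makes $X$ block upper-triangular in a way that renders nilpotency of $X$ equivalent to nilpotency of the top-left block $A$, while $\cS$-adaptedness ensures that no information is lost in the projection $X\mapsto A$.
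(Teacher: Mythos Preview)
Your proposal is correct and follows essentially the same argument as the paper's proof: induct on $n$, use \cref{lem: elementary basis adapted} to place an $\cS$-adapted $e_n$, pass to the subspace with vanishing $n$-th column, project onto the top-left $(n-1)\times(n-1)$ block (injective by $\cS$-adaptedness, nil because the block form forces it), apply induction, and then add all in-arcs of $n$ in $H$ while bounding the codimension by $\indeg_H(n)$. The only cosmetic difference is that the paper first sets $\cT=\{M:B(M)=0\}$ and then observes $D(M)=0$ for $M\in\cT$, whereas you zero out the whole $n$-th column at once; the resulting subspaces coincide.
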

\begin{proof}
The lemma holds for $n=1$. Assume for any nil subspace $\cS\leq \M(n-1,\F)$ with 
supporting graph $H=([n-1],F)$, such $H'$ exists. Consider a nil subspace $\cS\leq 
\M(n,\F)$. By~\cref{lem: elementary basis adapted}, without loss of generality, 
assume $e_n$ is $\cS$-adapted. 
Let $M\in \cS$ be written in the following form
\[
M=\begin{bmatrix}
A(M)&B(M)\\C(M)&D(M)
\end{bmatrix}
\]
where $A(M)\in \M(n-1,\F)$, $B(M)^t,C(M)\in \F^{n-1}$ and $D(M)\in\F$. Denote the 
linear space $A(\cS)=\{A(M):M\in \cS\}$. 
Let $\cT=\{M\in \cS: B(M)=0\}$. Since $\cS$ is nil, we have $D(M)=0$ and $A(M)$ is 
nilpotent for any $M\in \cT$. Consider the linear map $\Phi: \M(n,\F)\to \M(n,\F)$ 
which maps $M$ to $\begin{bmatrix}
0&B(M)\\0&0
\end{bmatrix}$. By the rank-nullity theorem, 
\[
\dim(\cS)=\dim(B(\cS))+\dim(\ker(\Phi)|_\cS)=\dim(B(\cS))+\dim(\cT).
\]
On the other hand, for any matrix $M\in \cT$, if $A(M)=0$, we have $C(M)=D(M)=0$ since 
$e_n$ is $\cS$-adapted. This shows that the projection mapping $M \in \cT$ to $A(M)$ is injective, which implies that $\dim(\cT)=\dim(A(\cT))$. 

By the induction hypothesis, since $A(\cT)\leq \M(n-1,\F)$ is nil, there is an 
acyclic spanning subgraph $H'_{A(\cT)}=([n-1],F'_{A(\cT)})$ of its supporting graph 
$H_{A(\cT)}=([n-1],F_{A(\cT)})$ satisfying $\dim(A(\cT))\leq|F'_{A(\cT)}|$. Note that 
$H_{A(\cT)}$ is an induced subgraph of the supporting graph $H=([n],F)$ of $\cS$. 
Construct $F_\cS$ by adding all possible arcs pointing to $n$ to 
$F'_{A(\cT)}$ and let $H'_\cS=([n],F_\cS)$. Since $H'_{A(\cT)}$ is acyclic and $n$ is a new vertex, $H'_\cS$ is also 
acyclic and $|F_\cS|$ equals $|F'_{A(\cT)}|$ plus the in-degree of $n$ in $H$. Note 
that the latter is lower bounded by $\dim(B(\cS))$. Now we have
\[
|F_\cS|\geq|F'_{A(\cT)}|+\dim(B(\cS))\geq \dim(A(\cT))+\dim(B(\cS))=\dim(\cS).\qedhere
\]
\end{proof}
Now we are ready to prove~\cref{thm:acyclic_subgraph}.

\paragraph{\cref{thm:acyclic_subgraph}, restated.}
\textit{Let $G=([n], E)$ be a directed graph, and $\cS_G\leq\M(n, \F)$ be the matrix space 
associated with $G$. Then $\mas(G)=\lns(\cS_G)$.}
\begin{proof}
It is straightforward to see $\mas(G)\leq\lns(\cS_G)$. Taking a maximum acyclic spanning
subgraph $G'=([n],E')$, we know that $\cS_{G'}$ is nil 
by~\cref{thm:acyclicity} and $\cS_{G'}\leq\cS_G$. Thus 
$\mas(G)=\dim(\cS_{G'})\leq\lns(\cS_G)$.

To see $\lns(\cS_G)\leq\mas(G)$, let $\cS$ be the largest nil subspace of $\cS_G$ with 
supporting graph $H$. By~\cref{lem: nil to acyclic graph}, there is an acyclic 
subgraph $H'$ of $H$ of size at least $\dim(\cS)$. Noting that $H$ is a spanning subgraph of 
$G$, then $H'$ is also a spanning subgraph of $G$. Thus $\dim(\cS)\leq\mas(G)$.
\end{proof}
\begin{remark}\label{remark: Gersten}
\cref{thm:acyclic_subgraph} can be thought of as a generalization of Gerstenhaber's theorem on dimensions of nil matrix spaces~\cite{Ger58}. Gerstenhaber proved that the largest dimension of a nil matrix spaces is $\frac{n(n-1)}{2}$.\footnote{Gerstenhaber's original proof required the underlying field to be sufficient large~\cite{Ger58}, this restriction was removed later by~\cite{zbMATH03952966}. Alternative proofs can be found in, e.g.~\cite{MATHES1991215,MACDONALD20122210,Paz13} and our proof adapts the strategy in~\cite{Paz13}.} Taking $G$ to be the complete directed graph (with self-loops on every vertex), then $\cS_G=\M(n,\F)$. In this case, $\lns(\M(n,\F))$ is given by the size of the maximum acyclic spanning subgraph of $G$, which is $\frac{n(n-1)}{2}$. This recovers Gerstenhaber's theorem.
\end{remark}

\subsection{The induced correspondence}\label{sec: vertex feedback}

For a directed graph $G=([n],E)$, let $\mai(G)$ be the maximum order of an acyclic 
induced subgraph of $G$. 
For $\cS \leq \M(n,\mathbb C)$ and $u \leq \mathbb C^n$, recall the definition of the induced subspace $\cS[U]$ from \cref{def:induced}.
Let $\mrns(\cS)$ be the
largest dimension over $U$ such that $\cS[U]$ is nil.

The following lemma is a reformulation of~\cite[Theorem 4]{MATHES1991215}.

\begin{lemma}[\hspace{1sp}{\cite{MATHES1991215}}]\label{lem: rank-1 nil space}
Let $\cS\leq\M(n, \F)$ be a matrix space spanned by rank-$1$ matrices. Then 
$\cS$ is nilpotent if and only if $\cS$ is nil.
\end{lemma}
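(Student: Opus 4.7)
The forward implication is immediate from the definitions, so the plan is to prove the reverse: if $\cS$ is spanned by rank-$1$ matrices and is nil, then $\cS$ is nilpotent. The approach is to recast the problem as a statement about a directed graph built from a chosen basis, in the spirit of~\cref{thm:acyclicity}. First I would fix a basis $A_1,\dots,A_m$ of $\cS$ consisting of rank-$1$ matrices and write $A_i=u_iv_i^t$ for column vectors $u_i,v_i\in\F^n$. Next I would define an auxiliary directed graph $H$ on $[m]$ by placing an arc from $i$ to $j$ precisely when $v_i^tu_j\neq 0$; equivalently, when $A_iA_j\neq 0$. The identity
\[
A_{i_1}A_{i_2}\cdots A_{i_r} \;=\; \prod_{j=1}^{r-1}\bigl(v_{i_j}^tu_{i_{j+1}}\bigr)\cdot u_{i_1}v_{i_r}^t
\]
shows that a product of $r$ basis matrices is non-zero iff $(i_1,\dots,i_r)$ traces a walk in $H$.

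Granting the key claim that \emph{$\cS$ nil implies $H$ acyclic}, nilpotence of $\cS$ is immediate: every walk in an acyclic graph on $m$ vertices uses at most $m$ vertices, so every product of $m+1$ basis matrices vanishes, and by multilinearity every product of $m+1$ elements of $\cS$ is zero, giving $\nilp(\cS)\leq m+1$. So the main task is proving this claim, and I would do so by contradiction. Choose a cycle in $H$ of minimum length, say $i_1\to i_2\to\cdots\to i_k\to i_1$. If $k=1$ (a self-loop), then $v_{i_1}^tu_{i_1}\neq 0$ forces $A_{i_1}^2\neq 0$, contradicting nil. For $k\geq 2$ the vertices $i_j$ are distinct, and minimality rules out chords: any arc $i_a\to i_b$ with $b\not\equiv a+1\pmod k$ would yield the strictly shorter cycle $i_a\to i_b\to i_{b+1}\to\cdots\to i_a$ of length $1+((a-b)\bmod k)<k$, and any self-loop would give a length-$1$ cycle.

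Consequently, the only arcs inside $\{i_1,\dots,i_k\}$ are the $k$ cycle arcs themselves. Writing $\alpha_a=v_{i_a}^tu_{i_{a+1}}$ (indices mod $k$) and $C:=\prod_{a=1}^k\alpha_a\neq 0$, I would set $B=A_{i_1}+\cdots+A_{i_k}\in\cS$ and expand $B^{k+1}$. The only surviving monomials in this expansion come from length-$k$ walks inside the cycle, each of which must be a once-around traversal $(a,a+1,\dots,a+k)\bmod k$ returning to its starting vertex and contributing exactly $C\cdot A_{i_a}$. Summing over the $k$ starting vertices gives $B^{k+1}=C\cdot B$, and iterating yields $B^{jk+1}=C^jB\neq 0$ for every $j\geq 1$, so $B$ is not nilpotent, contradicting nil. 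The main technical care is in the chord-and-self-loop bookkeeping that reduces the expansion of $B^{k+1}$ to this clean cyclic sum; there is no issue with the underlying field, as $B^{k+1}=CB$ is a purely combinatorial product identity independent of $\F$.
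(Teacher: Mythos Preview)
Your proof is correct. The paper does not give its own proof of this lemma; it is stated as a reformulation of \cite[Theorem~4]{MATHES1991215} and used as a black box. Your argument---building the auxiliary digraph $H$ on the rank-$1$ basis, identifying nonzero products with walks, and using a minimal-cycle-with-no-chords argument to produce a non-nilpotent $B\in\cS$---is clean and self-contained. It is also very much in the spirit of the paper: the same ``product along a cycle equals a nonzero scalar times a rank-$1$ matrix'' computation appears in the proof of \cref{thm:acyclic_restriction}, just specialized to the graphical setting. One minor remark: in the $k=1$ self-loop case you write ``$A_{i_1}^2\neq 0$, contradicting nil''---strictly speaking what contradicts nil is that $A_{i_1}^r=(v_{i_1}^tu_{i_1})^{r-1}A_{i_1}\neq 0$ for all $r$, but this is implicit in your displayed product identity.
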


\paragraph{\cref{thm:acyclic_restriction}, restated.} \textit{Let $G=([n], E)$ be a 
directed graph, and $\cS_G\leq\M(n, \mathbb{C})$ be the associated graphical 
matrix space. Then $\mai(G)=\mrns(\cS_G)$.}

\begin{proof}
To see $\mai(G)\leq\mrns(\cS_G)$, let $S\subseteq[n]$ be a vertex subset of the 
maximum order such that the induced subgraph $G[S]$ is acyclic. Let $U=\langle e_i\mid i\in S\rangle$. 
Then $\cS_G[U]$ is nil, as 
$\cS_G[U]=\cS_{G[S]}$ and $G[S]$ is acyclic.

To see $\mrns(\cS_G)\leq\mai(G)$, let $U\leq 
\mathbb{C}^n$ be a subspace of the largest dimension such that $\cS_G[U]$ is nil. 
Supposing  
$\dim(U)=k$, 
let $T_U\in \M(k\times n, \mathbb{C})$ be a matrix whose rows form an 
orthonormal basis 
of $U$. For $i\in[n]$, let $r_i$ be the $i$th column of 
$T_U$. Then $\cS_G[U]=\langle r_ir_j^*\mid \ (i,j)\in E\rangle$. In particular, 
$\cS_G[U]$ is spanned by rank-$1$ matrices.

Let $S\subseteq[n]$, 
$|S|=k$, such that $\{r_i\mid i\in S\}$ is a set of linearly independent column 
vectors. In particular, for $i\in S$, $r_i$ is 
non-zero. 
We claim that $G[S]$ is acyclic. If not, after a possible relabeling of vertices, 
let $(1, \dots, \ell,1)$ be a cycle in 
$G[S]$. Then for any $j\in[\ell-1]$, 
$r_{j}r_{j+1}^*$ is in $\cS_G[U]$ and $r_{\ell}r_{1}^*\in\cS_G[U]$. 
Note that
\[
(r_{1} r_{2}^*)(r_{2} r_{3}^*)\cdots 
(r_{\ell}r_{1}^*)=r_{1}(r_{2}^*r_{2})\dots(r_{\ell}^*r_{\ell})r_{1}^*
=\alpha r_{1}r_{1}^*,
\]
where $\alpha=(r_{2}r_{2}^*)\dots(r_{\ell}r_{\ell}^*)\neq 0$. 
This implies that $(r_{1} r_{2}^*)(r_{2} r_{3}^*)\cdots (r_{\ell} r_{1}^*)$ is not 
nilpotent, so $\cS[U]$ is not nilpotent. As $\cS[U]$ is spanned by rank-$1$ matrices, 
by~\cref{lem: 
rank-1 nil space}, $\cS[U]$ is not nil. This is a contradiction to our assumption, 
concluding the proof. 
\end{proof}

\section{Strong connectivity and irreducibility} \label{sec:conn}
In this section, we establish the correspondences between strong connectivity and irreducibility. We begin with the basic correspondences, then prove the inherited correspondence in~\cref{subsec: arc strong connectivity}, and finally the induced correspondence in~\cref{sec: vertex strong connectivity}.

\subsection{The basic correspondences}

Let $G=([n], E)$ be a directed graph. A set of vertices $V\subseteq [n]$ is 
\emph{invariant} in $G$, if there is no arc from $V$ to $[n]\setminus 
V$. Recall that $G$ is \emph{strongly connected}, if the only non-empty invariant vertex set 
is $[n]$ itself. 
A strongly connected component decomposition is a partition $[n]=V_1\cup \dots \cup V_k$, such that for any $i\in[k]$, $G[V_i]$ 
is a maximal strongly connected induced subgraph. It is clear that for a graph 
$G$, there exists a unique strongly connected component decomposition by choosing $V_1 <_t V_2 <_t \dots <_t V_k$, where $<_t$ is the topological ordering of the strongly connected components of $G$. Let $c(G)$ be the number of strongly connected components in $G$. 

Let $\cS\leq\M(n, \F)$ be a matrix space. A subspace $U\leq \F^n$ is \emph{invariant} in $\cS$ if $\cS(U):=\langle \cup_{B\in \cS}B(U)\rangle\leq U$. If $\cS$ admits a nontrivial invariant subspace (i.e., not $\{0\}$ and $\F^n$), we say $\cS$ is \emph{reducible}; otherwise, we say $\cS$ is \emph{irreducible}.
Let $0=U_0< U_1< \cdots < U_k=\F^n$ be a chain of invariant subspaces of $\cS$. This chain is maximal if for 
any $i\in[k]$, the induced matrix space of $\cS$ on $U_i/U_{i-1}$ is irreducible. 
By the Jordan--H\"older theorem (see e.g.~\cite[Theorem 3.5]{lang2005algebra}), two maximal chains of invariant subspaces of $\cS$ 
are of the same length. Let $c(\cS)$ be the length of a maximal chain of invariant 
subspaces. 

\begin{theorem}\label{thm:conn_s}
Let $G=([n], E)$ be a directed graph, and $\cS_G\leq\M(n, \F)$ be the graphical 
matrix space associated with $G$. Then 
$c(G)=c(\cS_G)$. In particular, $G$ is strongly 
connected if and only if $\cS_G$ is irreducible.
\end{theorem}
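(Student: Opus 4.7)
The plan is to first establish the basic correspondence ($G$ strongly connected iff $\cS_G$ irreducible), and then use it twice to determine the length of a maximal chain of invariant subspaces of $\cS_G$. The key identity throughout is that for any row vector $v \in \F^n$ and any $(i,j) \in E$, the right action gives $v \E_{i,j} = v_i e_j$; in particular, whenever $v_i \ne 0$, applying $\E_{i,j}$ produces $e_j$ up to a nonzero scalar.

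For the basic correspondence, the easy direction is that a nontrivial invariant vertex set $S$ of $G$ yields the nontrivial coordinate subspace $U = \langle e_j : j \in S\rangle$, which is invariant under $\cS_G$ by the key identity. Conversely, suppose $U$ is a nontrivial invariant subspace. Let $T = \{j \in [n] : e_j \in U\}$. If $T$ is nonempty and proper, then $T$ itself is a nontrivial invariant vertex set, since $j \in T$ and $(j,k) \in E$ give $e_k = e_j \E_{j,k} \in U$, forcing $k \in T$. If $T = \emptyset$, pick any nonzero $v \in U$; the key identity shows no arc can leave $\supp(v)$, since otherwise some $e_k$ would land in $U$ and join $T$. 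Hence $\supp(v)$ is invariant in $G$; if it is proper, it is the desired nontrivial invariant vertex set, while if it equals $[n]$ then $G$ has no arcs at all and so is not strongly connected once $n \ge 2$. In every case, $G$ fails to be strongly connected.

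For the equality $c(G) = c(\cS_G)$, order the strongly connected components $V_1, \ldots, V_k$ of $G$ so that all arcs between distinct components run from later to earlier, and set $U_i = \langle e_j : j \in V_1 \cup \cdots \cup V_i\rangle$. Each $U_i$ is invariant by the easy direction of the basic correspondence. The induced action of $\cS_G$ on the quotient $U_i/U_{i-1}$ is naturally identified with $\cS_{G[V_i]}$: for $(\ell,m) \in E$, the induced action of $\E_{\ell,m}$ on $U_i/U_{i-1}$ is nonzero only when both $\ell, m \in V_i$, since cross-arcs from $V_i$ to some $V_b$ with $b < i$ become zero modulo $U_{i-1}$, and arcs to later components are forbidden by the chosen ordering. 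Since $G[V_i]$ is strongly connected, $\cS_{G[V_i]}$ is irreducible by the already-proved basic correspondence, so $0 = U_0 < U_1 < \cdots < U_k = \F^n$ is a maximal chain of invariant subspaces of $\cS_G$. The Jordan--H\"older theorem then gives $c(\cS_G) = k = c(G)$. The step I expect to require the most care is the $T = \emptyset$ case of the basic correspondence, where one must squeeze out either a nontrivial invariant vertex set from the support of a chosen vector in $U$ or else deduce that $G$ has no arcs at all; identifying the induced action on each $U_i/U_{i-1}$ with $\cS_{G[V_i]}$ is a secondary but essential bookkeeping step.
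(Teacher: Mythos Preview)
Your proof is correct, and in both halves you take a somewhat different route from the paper. For the hard direction of the basic correspondence, the paper puts a matrix $T_U$ whose rows span $U$ into the shape $[I_d\ U_0]$ via column permutation and row operations, and then argues that no arc in the relabelled graph can go from $[d]$ to $[n]\setminus[d]$ because $e_j$ for $j>d$ cannot lie in the row span of $[I_d\ U_0]$. Your argument via $T=\{j:e_j\in U\}$ is more elementary and avoids the linear-algebraic normal form entirely; the only point to tighten is that you should note explicitly that $T=[n]$ is impossible (since then $U=\F^n$), so your two cases really are exhaustive, and that in the $T=\emptyset$ case the key identity actually forces every vertex of $\supp(v)$ to have out-degree zero (not merely that no arc leaves $\supp(v)$), which is what you need to conclude $E=\emptyset$ when $\supp(v)=[n]$. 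For $c(G)=c(\cS_G)$, the paper proves the two inequalities separately: it builds the coordinate chain from the SCCs (without checking maximality) to get $c(G)\le c(\cS_G)$, and then, from a maximal chain of invariant subspaces, uses a Laplace-expansion argument to extract nested vertex sets with no outgoing arcs, yielding $c(G)\ge c(\cS_G)$. Your approach---verifying that the induced action on each $U_i/U_{i-1}$ is $\cS_{G[V_i]}$, which is irreducible by the already-proved basic correspondence, hence the coordinate chain is maximal, hence Jordan--H\"older gives $c(\cS_G)=k$---is more direct and avoids the second combinatorial extraction entirely. The paper's extraction argument has the mild advantage of showing how to read off invariant vertex sets from an arbitrary maximal chain, but for the theorem as stated your route is cleaner.
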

\begin{proof}
We begin by proving the special case, that $G$ is strongly connected if and only if $\cS_G$ is irreducible (cf.~\cref{thm:conn}). 
This will then be used in the proof of the more general result that $c(G)=c(\cS_G)$.

If $G$ is not strongly connected, then there is a non-empty subset $V\subset [n]$ which is invariant in $G$. Without loss of generality assume $V=[d]$ for some $d\in[n-1]$. Consider the subspace $U=\langle e_1,\dots,e_d\rangle\leq \F^n$. It is straightforward to verify that $U$ is an invariant subspace. Thus $\cS_G$ is not irreducible.

If $\cS_G$ is reducible, then there exists a subspace $U\leq\F^n$ of 
dimension $d$, $d\in[n-1]$, such that for any $B\in\cS_G$, $B(U)\leq U$. Let 
$T_U\in \M(d\times n,\F)$ be a matrix whose rows span $U$. 
As $\rk(U)=d$, there exist a permutation matrix $P\in\GL(n,\F)$ and some invertible matrix
$T\in\GL(d,\F)$ such that $\hat{U}=TUP$ is of the form $\begin{bmatrix} 
I_d&U_0\end{bmatrix}$ for some matrix $U_0\in \M(d\times (n-d),\F)$. Then the 
row span of $\hat{U}$ is invariant under the action of $P\cS_G 
P^{-1}=\cS_{\hat{G}}$, 
where $\hat{G}$ is isomorphic to $G$ with respect to the permutation $P$. 

We claim that $\hat{G}$ is not strongly connected, which implies that $G$ is not 
strongly connected either. 
To see this, we show that $[d]$ is invariant, namely there is no 
arc from $[d]$ to $[n]\setminus [d]$. By way of contradiction, suppose there exists 
$(i,j)\in E(\hat{G})$ with 
$i\in[d]$ and $j\in[n]\setminus[d]$. Thus $\E_{i,j}\in\cS_{\hat{G}}$. Note 
that the $i$th row vector of $\hat{U}$ is 
$\hat{u}_i=e_i+(0,u_{0,i})\in\rowspan(\hat{U})$, where $u_{0,i}\in\F^{n-d}$.
The image of $\hat{u}_i$ under $\E_{i, j}$ is $\hat{u}\E_{i,j}=e_j$. As $\rowspan(\hat{U})$ is invariant under $\cS_{\hat{G}}$, we have 
$e_j\in\rowspan(\hat{U})$. This is impossible, because a non-zero vector in 
$\rowspan(\hat{U})$ has at least one non-zero entry in the first $d$ 
coordinates. To summarize, there is no arc from $[d]$ to $[n]\setminus [d]$, 
which shows that $\hat{G}$ is not strongly connected.

To see $c(G)=c(\cS_G)$, we first show that $c(G)\leq c(\cS_G)$. Suppose $[n]=V_1\cup\dots\cup V_k$ is the strongly connected component decomposition of $G$, where $V_1 <_t V_2 <_t \dots <_t V_k$. This naturally 
gives rise to a chain of subspaces $0=U_0< U_1< U_2<\cdots< U_k=\F^n$ 
where $U_i=\langle \{e_i \mid i\in V_1\cup\dots\cup V_i\}\rangle$. It is straightforward to verify that this gives rise to a chain of invariant 
subspaces of $\cS_G$. 

We then show that $c(G)\geq c(\cS_G)$. Let $0=U_0< U_1< U_2< \cdots< 
U_k=\F^n$ be a maximal chain of invariant subspaces of $\cS_G$. Suppose 
$\dim(U_i)=d_i$. Let $T\in \GL(n, \F)$ be a matrix whose first $d_i$ columns span 
$U_i$. By a repeated use of Laplace expansion (starting from the last 
$d_k-d_{k-1}$ columns), there exists $\emptyset=V_0\subset V_1 \subset V_2\subset 
\dots \subset V_k=[n]$, such that for any $i\in[k]$, the submatrix of $T$ with row 
indices $V_i$ and column indices $[d_i]$ is full-rank. By the argument above, 
there are no arc in $G$ that go from $ V_i$ to $[n]\setminus V_i$. It follows 
that for any $i\in[k]$, $V_i\setminus V_{i-1}$ is invariant, and $c(G)\geq 
c(\cS_G)$.
\end{proof}

\subsection{The inherited correspondence}\label{subsec: arc strong connectivity}
For a directed graph $G=([n],E)$, let $\Nsc(G)$ be the maximum number of arcs in a 
non-strongly-connected subgraph of $G$. Note that 
$\lambda(G):=|E|-\Nsc(G)$ is known as the \emph{arc-strong 
connectivity} of $G$ (see e.g.~\cite[Chap. 1.5]{BG08}), which is the minimum 
number of arcs of $G$ whose removal makes $G$ not strongly connected. 

For a matrix space $\cS\leq \M(n,\F)$, let $\Rdc(\cS)$ be the 
largest dimension of a reducible subspace of $\cS$. 
We are ready to establish the following result:

\paragraph{\cref{thm:lambda}, restated.} \textit{Let $G=([n], E)$ be a directed graph, and $\cS_G\leq\M(n, \F)$ be the matrix space associated with $G$. Then $\Nsc(G)=\Rdc(\cS_G)$.}

\begin{proof}
To see that $\Nsc(G)\leq\Rdc(\cS_G)$, take a maximum spanning
subgraph $G'$ of $G$ which is not strongly connected. Then $\cS_{G'}\leq \cS_G$ 
and the reducibility of $\cS_{G'}$ follows from~\cref{thm:conn_s}.

To show that $\Rdc(\cS_G)\leq\Nsc(G)$, suppose that $\cC\leq\cS_G$ admits a 
non-trivial and proper invariant subspace $U\leq\F^n$, where
$\dim(\cC)=\Rdc(\cS_G)$, $\dim(\cS_G)=m$, and $\dim(U)=d$. Let $T\in \GL(n, \F)$ be an invertible matrix such that $T=\begin{bmatrix}
T_U\\
T_V
\end{bmatrix}$, where $T_U\in \M(d\times n, \F)$ (resp.\ $T_V\in \M((n-d)\times n, \F)$) whose rows span $U$ (resp.\ a complementary subspace $V$). Let $T^{-1}=\begin{bmatrix}
L & R 
\end{bmatrix}$ where $L\in \M(n\times d, \F)$ and $R\in\M(n\times (n-d),\F)$. 

Note that for $B\in\M(n, \F)$, $TBT^{-1}=\begin{bmatrix}
T_UBL & T_UBR\\
T_VBL & T_VBR
\end{bmatrix}$. 
We are interested in $\cD:=T_U\cS_G R\leq \M(d\times (n-d), \F)$. Since $U$ is an 
invariant subspace of $\cC\leq \cS_G$, for any $C\in \cC$, $T_UCR=0\in\M(d\times (n-d), \F)$. Therefore, for any $C\in \cC$, $TCT^{-1}$ is of the form $\begin{bmatrix}
C_1& 0 \\
C_2& C_3
\end{bmatrix}$ where $C_1\in\M(d\times d, \F)$. It follows from the rank-nullity theorem that $\dim(\cD)\leq m - \dim(\cC)= m - \Rdc(\cS_G)$. In fact, we have $\dim(\cD)=m - \Rdc(\cS_G)$, as otherwise there exists another reducible $\cC'\leq \cS_G$ of dimension greater than $\Rdc(\cS_G)$ as witnessed by $U$.

Our goal is to construct a set of arcs in $G$ of size at most $m-\Rdc(\cS_G)$, whose removal makes $G$ not strongly connected. To achieve this, we examine 
$T=\begin{bmatrix}
T_U\\
T_V
\end{bmatrix}$. Since $T_U\in\M(d\times n, \F)$ is of rank $d$, there exists a 
$d\times d$ full-rank submatrix in $T_U$. Therefore, by a permutation of columns of $T$ and relabeling the vertices of $G$ if necessary, we can put $T$ in the form $
\begin{bmatrix}
T_{1,1} & T_{1,2} \\
T_{2,1} & T_{2,2}
\end{bmatrix}$ where $T_{1,1}\in\GL(d, \F)$. Then $T^{-1}$ is of the form $
\begin{bmatrix}
S_{1,1} & S_{1,2} \\
S_{2,1} & S_{2,2}
\end{bmatrix}$, where $S_{2,2}=T_{2,2}-T_{2,1}T_{1,1}^{-1}T_{1,2}$ is known as the 
Schur complement of $T_{1,1}$~\cite{Zha06}. The basic observation that $S_{2,2}$ is 
invertible will be crucial in the following. 
 
We claim that in $G=([n], E)$ (after a suitable relabeling), the number of arcs going from $[d]$ to $[n]\setminus [d]$ is 
no more than $\dim(\cD)=m - \Rdc(\cS_G)$. Let $T_{1,1}=\begin{bmatrix}
c_1^t & \dots & c_d^t
\end{bmatrix}$ where $c_i\in \F^d$, and $S_{2,2}=\begin{bmatrix}
r_1 \\
\vdots \\
r_{n-d}
\end{bmatrix}$ where $r_i\in \F^{n-d}$. Since $T_{1,1}$ and $S_{2,2}$ are 
invertible, $c_i$'s (resp.\ $r_j$'s) are 
linearly independent. It follows that $\{T_U\E_{i,j}R \mid i\in[d], 
j\in[n]\setminus [d]\}\subseteq\M(d\times (n-d), \F)$ are linearly independent, as 
$T_U\E_{i,j}R=\begin{bmatrix}
T_{1,1} & T_{1,2}
\end{bmatrix}\E_{i,j}\begin{bmatrix}
S_{1,2}\\ S_{2,2}
\end{bmatrix}=c_i^t r_{j-d}\in \M(d\times (n-d), \F)$. Let the arcs in $G=([n], 
E)$ from $[d]$ to $[n]\setminus [d]$ be $(i_1, j_1), \dots, (i_\ell, j_\ell)$, where
$i_1,\dots,i_\ell\in[d]$ and $j_1,\dots,j_\ell\in[n]\setminus [d]$. Then $\cD$ contains $T_U\E_{i_k, j_k}R$, 
$k\in[\ell]$, which are linearly independent. It follows that $\ell\leq 
\dim(\cD)=m-\Rdc(\cS_G)$. That is, by removing $\ell\leq m-\Rdc(\cS_G)$ 
arcs, $G$ becomes not strongly connected. This concludes the proof. 
\end{proof}

\subsection{The induced correspondence}\label{sec: vertex strong connectivity}
For a directed graph $G=([n],E)$, let $\Insc(G)$ be the maximum order of a 
non-strongly-connected induced subgraph of $G$. Note that 
$\kappa(G):= n-\Insc(G)$ is known as the \emph{vertex-strong 
connectivity} of $G$ (see e.g.~\cite[Chap. 1.5]{BG08}), which is the minimum 
number of vertices of $G$ whose removal makes $G$ not strongly connected. 

For a matrix space $\cS\leq \M(n,\mathbb{C})$, let $\Irdc(\cS)$ be the largest dimension of $U\leq\mathbb{C}^n$ such that $\cS[U]$ is reducible. We are ready to establish the following result:

\paragraph{\cref{thm:kappa}, restated.} \textit{Let $G=([n], E)$ be a directed graph, and $\cS_G\leq\M(n, \mathbb{C})$ be the matrix space associated with $G$. Then $\Insc(G)=\Irdc(\cS_G)$.}

\begin{proof}
To see that $\Insc(G)\leq\Irdc(\cS_G)$, take a maximum induced
subgraph $G'=(V(G'),E(G'))$ of $G$ which is not strongly connected. Let $U$ be the 
subspace spanned by $\{e_i\mid i\in V(G')\}$. Then by~\cref{thm:conn_s}, $\cS_G[U]\leq \M(|V(G')|,\mathbb{C})$ is 
reducible since $G'$ is not strongly connected.

To see that $\Insc(\cS_G)\leq\Irdc(G)$, take a subspace $U$ of dimension $d$ 
such that $\cS_G[U]$ admits a nontrivial invariant subspace $V\leq U$. Let $T_U\in \M(d\times n,\mathbb{C})$ be the matrix whose rows are an orthonormal basis of $U$. Take a unitary matrix
 $R\in \M(d,\mathbb{C})$ which maps the first $b=\dim(V)$ orthonormal basis vectors of $U$ to an orthonormal basis of $V$. Then for any matrix 
$B\in\cS_G$, 
\begin{equation}\label{eq: block-upper-triangular}
RT_UBT_U^*R^*=\begin{bmatrix}B_{1,1}&0\\B_{2,1}&B_{2,2}\end{bmatrix}
\end{equation}
for some $B_{1,1}\in \M(b,\mathbb{C})$, $B_{2,1}\in \M(b\times (d-b),\mathbb{C})$ and 
$B_{2,2}\in   
\M(d-b,\mathbb{C})$. 
Without loss of generality, assume the first $d$ columns of $RT_U\in \M(d\times 
n,\mathbb{C})$ are linearly independent and denote this submatrix as $T\in\GL(d,\mathbb{C})$. (Otherwise, we 
can permute the columns of $T_U$ and permute the vertices of $G$ accordingly). 
Moreover, there is a partition $[d]=I_1\cup I_2$ with $|I_1|=b$, such that the 
submatrix $T_1$ (resp.\ $T_2$) of $T$ with column indices from $I_1$ (resp.\ $I_2$) 
and row indices from $[b]$ (resp.\ $[n]\setminus[b]$) is invertible.

We shall prove that there is no edge of $G[[d]]$ going from $I_1$ to $I_2$. 
Suppose for contradiction that we have $(i,j)\in E$ for some $i\in I_1$ and $j\in I_2$. Let the 
$i$th column of $RT_U$ be $t_i$. Then 
$RT_U\E_{i,j}T_U^*R^*=t_it_j^*$ is of the form as in~\cref{eq: 
block-upper-triangular}. Thus, the $k$th coordinate of $t_{i}$ is $0$ for all 
$k\in[b]$ or the $\ell$th coordinate of $t_{j}$ is $0$ for all 
$\ell\in[d]\setminus[b]$. In the first case, we have the $i$th column of $T_1$ is $0$, 
contradicting $T_1$ being invertible; and in the second case, we have the $j$th 
column of $T_2$ is $0$, contradicting $T_2$ being invertible. Thus there cannot be an edge from $I_1$ to $I_2$, which proves that $G[[d]]$ is not strongly connected, and $\Insc(\cS_G)\leq\Irdc(G)$ follows.
\end{proof}

\section{Isomorphism, congruence, and conjugacy}\label{sec:iso}

Recall that two directed graphs $G=([n], E)$ and $H=([n], F)$ are isomorphic, if 
there exists a bijective map $f:[n]\to [n]$, such that $(i, j)\in E$ if and only 
if $(f(i), f(j))\in F$. In this section we extend this notion to graphical matrix 
spaces.
Recall that two matrix spaces $\cS, \cC\leq \M(n, 
\F)$ are conjugate (resp.\ congruent) if and only if there exists $T\in \GL(n, 
\F)$, such that $\cS=T\cC T^{-1}$ (resp.\ $\cS=T\cC T^{t}$). When $\F=\mathbb{C}$, $\cS, \cC\leq \M(n, 
\F)$ are congruent if and only if there exists $T\in \GL(n, 
\F)$, such that $\cS=T\cC T^{*}$. 

Note that the action of $\S_n$, when considered as a subgroup of $\GL(n,\F)$, is 
compatible with both the conjugacy and congruence actions of $\GL(n,\F)$. 
This immediately gives us the following. 
\begin{observation}
Let $G=([n], E)$ and $H=([n], F)$ be two directed graphs and $\cS_G$ and $\cS_H$ be the associated graphical matrix spaces, respectively. If $G$ is isomorphic to a subgraph of $H$, then $\cS_G$ is conjugate to (resp.\ congruent to) a subspace of $\cS_H$.
\end{observation}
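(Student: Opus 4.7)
The plan is to reduce the claim to the simple observation that conjugation by a permutation matrix implements a relabeling of vertices on the generating elementary matrices. Since the statement uses symmetric groups embedded in $\GL(n,\F)$, and permutation matrices $P$ satisfy $P^{-1} = P^t$ (and $P^{-1}=P^*$ over $\mathbb{C}$), one and the same matrix $P$ will realize both the conjugacy and congruence claims simultaneously.

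First, I unpack the hypothesis. The statement ``$G$ is isomorphic to a subgraph of $H$'' means there is a spanning subgraph $H' = ([n], F')$ of $H$, with $F' \subseteq F$, together with a bijection $f: [n] \to [n]$ such that $(i,j) \in E$ if and only if $(f(i), f(j)) \in F'$. In particular, $\cS_{H'} \leq \cS_H$.

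Next, I introduce the permutation matrix. Let $P \in \GL(n,\F)$ be the permutation matrix associated to $f$, chosen so that $e_i P = e_{f(i)}$ (consistent with the paper's row-vector, right-action convention). A direct check using the identity $e_k \E_{i,j} = \delta_{ki} e_j$ then gives
\[
P \E_{i,j} P^{-1} = \E_{f(i), f(j)} \qquad \text{for every } (i,j) \in [n]\times[n].
\]
Applying this to the spanning set of $\cS_G$,
\[
P \cS_G P^{-1} = \langle P \E_{i,j} P^{-1} : (i,j) \in E \rangle = \langle \E_{f(i), f(j)} : (i,j) \in E \rangle = \langle \E_{i',j'} : (i',j') \in F' \rangle = \cS_{H'},
\]
and $\cS_{H'} \leq \cS_H$ gives that $\cS_G$ is conjugate to a subspace of $\cS_H$.

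For congruence, observe that $P$ is a permutation matrix, so $P^{-1} = P^t$ (and $P^{-1}=P^*$ in the complex-conjugate-transpose convention). Hence $P \cS_G P^t = P \cS_G P^{-1} = \cS_{H'} \leq \cS_H$, so $\cS_G$ is congruent to a subspace of $\cS_H$ as well. There is no genuine obstacle here; the only thing to watch is that the chosen convention for $P$ matches the right-action convention on row vectors used in the preliminaries, so that the key identity $P \E_{i,j} P^{-1} = \E_{f(i), f(j)}$ holds with the correct indices and no unwanted inverse on $f$.
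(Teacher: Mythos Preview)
Your approach is exactly what the paper has in mind: it states the observation with no proof beyond the remark that the action of $\S_n$, embedded in $\GL(n,\F)$, is compatible with both the conjugacy and congruence actions, and you have simply spelled this out.

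One small correction: with your convention $e_iP=e_{f(i)}$, the identity you actually get is $P\E_{i,j}P^{-1}=\E_{f^{-1}(i),f^{-1}(j)}$, not $\E_{f(i),f(j)}$ (compute $e_k(P\E_{i,j}P^{-1})=\delta_{f(k),i}\,e_{f^{-1}(j)}$). The remedy is exactly the caveat you anticipated: either take $P$ with $e_iP=e_{f^{-1}(i)}$, or conjugate by $P^{-1}=P^t$ instead, after which the rest of your argument goes through verbatim for both conjugacy and congruence.
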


\subsection{Isomorphism and congruence}

We show that graphical matrix space congruence implies graph isomorphism by 
proving a stronger result: $G$ is isomorphic to a subgraph of $H$ if and only if 
$\cS_G$ is congruent to a subspace of $\cS_H$.
\begin{proposition}\label{prop:congruence}
Let $G=([n], E)$ and $H=([n], F)$ be directed graphs. Let $\cS_G$ and $\cS_H$ be 
the associated graphical matrix spaces, respectively. If $\cS_G$ is congruent to a subspace of $\cS_H$, then $G$ is isomorphic to a subgraph of $H$.
\end{proposition}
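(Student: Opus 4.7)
The plan is to extract the desired embedding $\sigma : [n] \to [n]$ directly from the invertible matrix $T \in \GL(n, \F)$ witnessing the congruence $T \cS_G T^t \leq \cS_H$. The crux is a simple rank-one identity: writing $c_1, \dots, c_n$ for the columns of $T$, one computes
\[
T \E_{i,j} T^t \;=\; c_i\, c_j^t,
\]
so that the support of this rank-one matrix is exactly $S_i \times S_j$, where $S_i := \supp(c_i) \subseteq [n]$. Since every matrix in $\cS_H$ is supported on $F$, the congruence hypothesis forces
\[
S_i \times S_j \;\subseteq\; F \qquad \text{for every } (i,j) \in E.
\]

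With this translation in hand, it suffices to find a bijection $\sigma : [n] \to [n]$ satisfying $\sigma(i) \in S_i$ for every $i \in [n]$. Indeed, given such a $\sigma$, for every $(i, j) \in E$ we would have $(\sigma(i), \sigma(j)) \in S_i \times S_j \subseteq F$, which is exactly the statement that $\sigma$ is an isomorphism from $G$ onto a subgraph of $H$.

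The existence of this bijection is a clean application of Hall's marriage theorem to the bipartite graph on $[n] \cup [n]$ with edge set $\{(i, k) : k \in S_i\}$. Hall's condition is verified by a purely linear-algebraic argument: for any $I \subseteq [n]$, the columns $\{c_i : i \in I\}$ are linearly independent (since $T$ is invertible) and all lie in the coordinate subspace determined by $\bigcup_{i \in I} S_i$, whose dimension is $|\bigcup_{i \in I} S_i|$. Hence $|I| \leq |\bigcup_{i \in I} S_i|$, as required.

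I do not anticipate any real obstacle: everything hinges on the rank-one identity $T \E_{i,j} T^t = c_i c_j^t$, which converts the congruence hypothesis into a combinatorial support condition that Hall's theorem resolves immediately. It is worth noting that this factorization is precisely the feature of the congruence action that is absent for conjugacy (where $T \E_{i,j} T^{-1}$ does not factor as the outer product of a column of $T$ with a fixed row vector), and this asymmetry is consistent with the subsequent remark in the paper that the analogous strengthening fails in the conjugacy setting.
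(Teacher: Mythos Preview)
Your proof is correct and follows essentially the same route as the paper: both compute $T\E_{i,j}T^t = c_i c_j^t$ (the paper writes it entrywise as $[t_{k,i}t_{\ell,j}]_{k,\ell}$), deduce the support constraint, and then extract a permutation $\sigma$ with $\sigma(i)\in\supp(c_i)$ for all $i$. The only cosmetic difference is in justifying the last step: the paper appeals directly to invertibility of $T$ (implicitly via a nonvanishing term in the Leibniz expansion of $\det T$), whereas you phrase the same fact through Hall's theorem.
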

\begin{proof}
Let $T\in\GL(n,\F)$ such that $T\cS_GT^t\leq \cS_H$. Let $t_{i,j}\in\F$ be the $(i,j)$th entry of $T$. Then for any $(i,j)\in E$, we have
\[
T\E_{i,j}T^t
=\begin{bmatrix}t_{1,i}\\\vdots\\t_{n,i}\end{bmatrix}\begin{bmatrix}t_{1,j}&\cdots&t_{n,j}\end{bmatrix}=[t_{k,i}t_{\ell,j}]_{k,\ell\in[n]}\in\cS_H.
\]
By the structure of $\cS_H$, if the $(k,\ell)$th entry $t_{k,i}t_{\ell,j}$ of 
$T\E_{i,j}T^t$ is 
non-zero, then $(k,\ell)\in F$. As $T$ is 
invertible, there exists a permutation $\sigma:[n]\to[n]$ such that 
$t_{\sigma(i),i}\neq 0$ for any $i\in[n]$. 
In particular, $t_{\sigma(i),i}t_{\sigma(j),j}\neq 0$ for 
any $i,j\in[n]$. Thus, we have that for $(i,j)\in E$, 
$(\sigma(i),\sigma(j))\in F$. In other words, $\sigma$ is an injective map from 
vertices of $G$ to vertices of $H$ which preserves arcs, i.e., an embedding of $G$ 
into $H$. This shows that $G$ is a subgraph of $H$, as claimed.
\end{proof}

\begin{remark}
	Recall that when $\F=\mathbb{C}$, we define congruence using $T^*$ rather than 
	$T^t$. It can be verified that the above argument works verbatim in case 
	$\F=\mathbb{C}$, by simply replacing every instance of a transpose by a 
	conjugate transpose and by replacing $t_{\ell,j}$ by $\overline{t_{\ell,j}}$.
\end{remark}
The proof of \cref{prop:congruence} yields the following 
corollary which will be useful in \cref{subsec:transitivity}.
\begin{corollary}\label{cor:cong_conseq}
	Let $G=([n], E)$ and $H=([n], F)$ be directed graphs with associated graphical matrix spaces $\mathcal{S}_G$ and 
	$\mathcal{S}_H$, respectively. If $T\mathcal{S}_GT^{t}=\mathcal{S}_H$ for some $T\in\GL(n,\F)$, then any permutation $\sigma$ satisfying that the $(i,\sigma(i))$th entry of $T$ is non-zero for each $i\in[n]$ gives an isomorphism between $G$ and $H$.
\end{corollary}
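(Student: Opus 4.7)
My plan is to adapt the argument in the proof of Proposition~\ref{prop:congruence} to the equality setting, promoting the edge-preserving embedding produced there to a full isomorphism. The key extra ingredient is just a dimension count: from $T\mathcal{S}_GT^{t} = \mathcal{S}_H$, we obtain $\dim\mathcal{S}_G = \dim\mathcal{S}_H$, and hence $|E| = |F|$ by the definition of graphical matrix spaces.

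Next, given any $\sigma$ with $t_{i,\sigma(i)} \neq 0$ for every $i$, I would set $\tau := \sigma^{-1}$. Substituting $j = \sigma(i)$ in the condition on $\sigma$ shows that $t_{\tau(j),j} \neq 0$ for all $j$, so $\tau$ satisfies precisely the non-vanishing condition used in the proof of Proposition~\ref{prop:congruence}. Replaying that argument verbatim: for each $(i,j) \in E$, the matrix $T\E_{i,j}T^{t}$ lies in $\mathcal{S}_H$, and its $(\tau(i),\tau(j))$-entry equals $t_{\tau(i),i}\,t_{\tau(j),j}$, which is non-zero. Since $\mathcal{S}_H$ is supported on $F$, this forces $(\tau(i),\tau(j)) \in F$, so $\tau$ is a vertex bijection that sends edges of $G$ into edges of $H$.

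Finally, since $\tau$ induces an injection from $E$ into $F$ and $|E| = |F|$, this injection is a bijection, so $\tau$ is an isomorphism $G \to H$; equivalently, $\sigma = \tau^{-1}$ is an isomorphism $H \to G$, and in either formulation $\sigma$ gives an isomorphism between $G$ and $H$. I do not anticipate a serious obstacle here: the only subtle point is noticing that the corollary's convention on $\sigma$ (via non-vanishing $(i,\sigma(i))$-entries) differs from that in Proposition~\ref{prop:congruence} (via non-vanishing $(\sigma(i),i)$-entries) exactly by taking inverses, after which the proof reduces to a direct reprise of the proposition's argument together with a one-line dimension count.
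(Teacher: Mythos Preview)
Your proposal is correct and essentially matches the paper's approach: the paper simply states that the proof of Proposition~\ref{prop:congruence} yields this corollary, and what you have written is exactly how one fills in the details—replay the edge-preserving argument, then upgrade the embedding to an isomorphism via the dimension count $|E|=\dim\cS_G=\dim\cS_H=|F|$. Your observation that the corollary's convention on $\sigma$ differs from the proposition's by inversion is the only wrinkle, and you handle it correctly.
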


The next natural thing to examine is whether this basic correspondence can be 
boosted to an inherited correspondence. Namely, is the maximum size over subgraphs 
of $G$ that do not contain an isomorphic copy of $H$ equals the largest dimension 
over subspaces of $\cS_G$ that do not contain a congruent copy of $\cS_H$? 
Interestingly, this is not true in general, as seen in the following example.
\begin{example}\label{example: no inherited version of congruence}
	Let $G=([2],\{(1,2),(2,1)\})$ and $H=([2],\{(1,2)\})$. Then any non-empty 
	subgraph of $G$ contains an isomorphic copy of $H$, thus the maximum size over 
	subgraphs of $G$ with no isomorphic copy of $H$ is $0$. On the other hand, 
	take the one-dimensional subspace 
	$\cS=\left\{\begin{bmatrix}0&x\\x&0\end{bmatrix}:~x\in\F\right\}$ of 
	$\cS_G$. As congruence preserves symmetry, $\cS_H$ cannot be a subspace of any 
	congruent copy of $\cS$. 
	Then the largest dimension over subspaces of $\cS_G$ that do not contain a 
	congruent copy of $\cS_H$ is at least $1$. 
\end{example}

\subsection{Isomorphism and conjugacy}
We now show that graphical matrix space conjugacy implies graph isomorphism. The proof is inspired by~\cite[Theorem 4.13 in arXiv version 2]{BS20} and is much more complicated than that of~\cref{prop:congruence}. To prepare for its proof, we need the following notation.

A $3$-way array is a rectangular cuboid of field elements. Let $\T(\ell\times 
n\times m, \F)$ be the linear space 
of $\ell\times n\times m$ 3-way arrays over $\F$. We use the fixed-width 
teletypefont for 3-way 
arrays, such as $\tA$, $\tens{B}$, and so on.

Given $\tA\in \T(\ell\times n\times m, \F)$, the $(i,j,k)$th entry of $\tA$ is 
denoted as $\tA(i,j,k)\in \F$. We can slice $\tA$ along one direction and obtain 
several 
matrices, which are called slices. For example, slicing along the first 
coordinate, we obtain the \emph{horizontal} slices, namely $\ell$ matrices $A_1, 
\dots, A_\ell\in \M(n\times m, \F)$, where $A_i(j,k)=\tA(i,j,k)$. Similarly, we 
also obtain the \emph{vertical} slices by slicing along the second coordinate, and 
the \emph{frontal} slices by slicing along the third coordinate. 

A $3$-way array allows for general linear group actions in three directions. 
Given $P\in \M(\ell, 
\F)$ and $Q\in \M(n, \F)$, let 
$P\tA Q$ be the $\ell \times 
n\times m$ $3$-way array whose $k$th frontal slice is $P A_k Q$. For 
$R=(r_{i,j})\in 
\GL(m, \F)$, let $\tA^R$ be the $\ell\times n\times m$ $3$-way array whose 
$k$th 
frontal slice is $\sum_{k'\in[m]}r_{k',k}A_{k'}$.  

Given $\tA\in \T(\ell\times n\times m, \F)$, sometimes it is convenient to work 
with the associated 3-way arrays obtained by permuting the three indices. For 
example, from $\tA$ above we can construct a 3-way array $\tens{B}\in \T(n\times 
m\times \ell, \F)$, such that the $i$th frontal slice of $\tens{B}$ is the $i$th 
horizontal slice of $\tA$. 

Let $\cA, \cB\leq\M(n, \F)$ and suppose $\dim(\cA)=\dim(\cB)=m$. We can form 
$3$-way arrays $\tA$ and $\tB$ in $\T(n\times n\times m, \F)$, by taking ordered 
bases of $\cA$ and $\cB$, respectively. Then $\cA$ and $\cB$ are conjugate, if and 
only if there exist $T\in\GL(n, \F)$ and $R\in\GL(m, \F)$, such that 
$T\tA T^{-1}=\tB^R$.

We now turn to graphical matrix spaces, and deduce a key lemma 
(\cref{lem:conj_key}) that characterizes conjugacy of graphical matrix 
spaces. The following notation will be used several times in slightly different contexts, 
so we collect them as a definition for future references. 

\begin{definition}\label{def:conj}
Let $G=([n], E)$ and $H=([n], F)$ be 
directed graphs of size $m$. Let $\cS_G$ and $\cS_H$ be the associated 
graphical matrix spaces, respectively.
Construct $\tS_G\in \T(n\times n\times m, \F)$ by setting its frontal slices as $(\E_{i, j} \mid (i, 
j)\in E)$, where edges in $G$ are ordered lexicographically.
Similarly construct $\tS_H\in\T(n\times n\times m, \F)$. 
Let $\tC\in \T(n\times m\times n, \F)$ be the 3-way array whose frontal slices are 
the horizontal slices of $\tS_G$. That is, $\tC(i, j, k)=\tS_G(k, i, j)$. 
Similarly, let $\tD\in \T(n\times m\times n, \F)$ be the 3-way array whose frontal 
slices are the horizontal slices of $\tS_H$. 
Let $C_i$'s 
be the frontal slices of $\tC$, and $D_i$'s the frontal slices of $\tD$.
\end{definition}

\begin{observation}\label{obs:non-overlapping}
Let $C_i$ and $D_i$ be as in~\cref{def:conj}. The non-zero columns of 
$C_i$'s are non-overlapping. That is, for any 
$k\in[m]$, there exists one and only one $i\in[n]$, such that the $k$th column of 
$C_i$ is non-zero. 
The above statement holds for $D_i$'s too. 
In particular, for any $T\in\GL(n, \F)$, the non-zero columns of 
$TD_i$'s are also non-overlapping. 
\end{observation}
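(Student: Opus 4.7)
The plan is to prove the observation by unpacking the definition of $\tC$ and $\tD$ in terms of the edges of $G$ and $H$, respectively, and then noting that left multiplication by an invertible matrix preserves which columns are zero.

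First I would trace through the construction of the $C_i$'s. By definition, the $k$th frontal slice $S_k$ of $\tS_G$ equals $\E_{i_k,j_k}$, where $(i_k,j_k)$ denotes the $k$th edge of $G$ in lexicographic order. Since $\tC(i,j,k) = \tS_G(k,i,j)$, the $i$th frontal slice $C_i$ of $\tC$ is the $i$th horizontal slice of $\tS_G$, i.e.\ the $n\times m$ matrix whose $(j,k)$-entry equals $\tS_G(i,j,k) = [\E_{i_k,j_k}](i,j) = [i=i_k]\cdot[j=j_k]$. Thus the $k$th column of $C_i$ is $e_{j_k}^t$ when $i=i_k$, and the all-zero column otherwise.

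From this description the first conclusion is immediate: for each $k\in[m]$, the $k$th column of $C_i$ is non-zero precisely when $i=i_k$, so there is exactly one $i\in[n]$ for which this holds. The same verbatim argument, applied to $\tD$ with the edges of $H$ in place of the edges of $G$, shows that the non-zero columns of the $D_i$'s are non-overlapping.

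For the final statement, I would use the fact that $T\in\GL(n,\F)$ is invertible, so for any $v\in\F^n$, $Tv=0$ if and only if $v=0$. Applied column-by-column, this gives that the $k$th column of $TD_i$ is zero if and only if the $k$th column of $D_i$ is zero. Hence the set of non-zero column indices of $TD_i$ coincides with that of $D_i$, and the non-overlapping property is inherited from the $D_i$'s. Since each step is a direct unpacking of definitions, there is essentially no obstacle here; the value of stating the observation lies in isolating this structural fact for repeated use later.
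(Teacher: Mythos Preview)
Your proof is correct and follows essentially the same approach as the paper's: both identify the $k$th column of $C_i$ as the transpose of the $i$th row of the $k$th frontal slice $\E_{i_k,j_k}$ of $\tS_G$, which is $e_{j_k}$ when $i=i_k$ and zero otherwise. Your treatment of the $TD_i$ case is slightly more explicit than the paper's (which simply says ``and therefore $TD_i$ too''), but the underlying reasoning is identical.
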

\begin{proof}
Suppose the $k$th edge by lexicographic order in $G$ is $(i, j)$. So the $k$th 
frontal slice of $\tS_G$ is $\E_{i,j}$. The $k$th 
column of $C_i$ is the transpose of the $i$th row of $\E_{i,j}$, namely $e_j$. For 
any $i'\neq i$, the $k$th 
column of $C_{i'}$ is the transpose of the $i'$th row of $\E_{i,j}$, which is the 
all-zero vector. The same argument works for $D_i$, and therefore $TD_i$ too.
\end{proof}

An important consequence of~\cref{obs:non-overlapping} is the following.
\begin{lemma}\label{lem:conj_key}
Let $\cS_G, \cS_H\leq\M(n, \F)$ with $\dim(\cS_G)=\dim(\cS_H)=m$, $\tS_G, 
\tS_H\in\T(n\times n\times m, \F)$, and 
$\tC, \tD\in \T(n\times m\times n, \F)$ be in~\cref{def:conj}. Then 
$\cS_G$ and $\cS_H$ are conjugate, if and only if there exists $T\in 
\GL(n, \F)$, such that the following holds: 
letting $C_i'$ be the $i$th 
frontal slice of $\tC^{T}$, then for any $i\in[n]$, 
$\colspan(C_i')\subseteq \colspan(TD_i)$.
\end{lemma}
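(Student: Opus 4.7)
The plan is to prove the following per-$T$ equivalence: for each fixed $T\in\GL(n,\F)$, the stated column-span condition is equivalent to the subspace inclusion $T^{\top}\cS_G (T^{\top})^{-1}\subseteq \cS_H$. Granted this, the lemma follows immediately. For the ``only if'' direction, any invertible $S$ conjugating $\cS_G$ to $\cS_H$ yields $T:=S^{\top}$ satisfying the column-span condition; for the ``if'' direction, the inclusion $T^{\top}\cS_G(T^{\top})^{-1}\subseteq \cS_H$ is upgraded to equality by comparing dimensions (both $m$), exhibiting $T^{\top}$ as a conjugator.

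For the forward computation, I would first make the lemma's condition explicit. Writing $C_i'=\sum_{k'} t_{k',i}\,C_{k'}$ and invoking \cref{obs:non-overlapping} (which tells us that the $\ell$-th column of $C_{k'}$ is $e_{b_\ell}$ exactly when the $\ell$-th edge $(a_\ell,b_\ell)\in E$ has $a_\ell=k'$, and is zero otherwise), the $\ell$-th column of $C_i'$ becomes $t_{a_\ell, i}\,e_{b_\ell}$. Hence
\[
\colspan(C_i')=\langle e_j : \exists\,(a,j)\in E \text{ with } t_{a,i}\neq 0\rangle,
\]
and analogously $\colspan(TD_i)=T\langle e_b:(i,b)\in F\rangle$. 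Applying $T^{-1}$ to both sides, the lemma's condition rewrites as the set-theoretic statement: for every $(a,j)\in E$,
\[
\supp(T_{a,\cdot})\times \supp\big((T^{-1})_{\cdot,j}\big) \subseteq F.
\]

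On the conjugation side, a direct entrywise computation using $(T^{\top})_{i,a}=t_{a,i}$ and $((T^{\top})^{-1})_{j,\ell}=(T^{-1})_{\ell,j}$ gives
\[
\big(T^{\top}\E_{a,j}(T^{\top})^{-1}\big)_{i,\ell}=t_{a,i}\,(T^{-1})_{\ell,j}.
\]
Requiring this rank-$1$ matrix to lie in $\cS_H$ for every generator $\E_{a,j}$ of $\cS_G$ is exactly the same support inclusion displayed above. This establishes the per-$T$ equivalence, and the lemma follows as outlined.

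The main obstacle is purely bookkeeping: one has to carefully unwind the index permutation in the definition of $\tC$ (so that $\tC^T$ acts on the third coordinate via the \emph{columns} of $T$), and keep straight which transposes and inverses pair with which rows and columns of $T$. The non-overlapping structure from \cref{obs:non-overlapping} is what lets $\colspan(C_i')$ be described so cleanly in terms of standard basis vectors, thereby enabling the side-by-side comparison with the elementary formula for $T^{\top}\E_{a,j}(T^{\top})^{-1}$.
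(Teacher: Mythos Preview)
Your argument is correct. The per-$T$ equivalence you establish—namely that the column-span condition holds for a given $T$ if and only if $T^{\top}\cS_G(T^{\top})^{-1}\subseteq\cS_H$—is valid, and the dimension count finishes the proof exactly as you say. The computation of the $\ell$-th column of $C_i'$ as $t_{a_\ell,i}\,e_{b_\ell}$, the resulting description of $\colspan(C_i')$ in terms of standard basis vectors, and the entrywise formula $(T^{\top}\E_{a,j}(T^{\top})^{-1})_{i,\ell}=t_{a,i}(T^{-1})_{\ell,j}$ are all correct; the two conditions do indeed collapse to the same support inclusion $\supp(T_{a,\cdot})\times\supp((T^{-1})_{\cdot,j})\subseteq F$ for each $(a,j)\in E$.

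The paper takes a different route. Rather than reducing both sides to a support condition, it works directly in the $3$-way array formalism: in the ``if'' direction it uses \cref{obs:non-overlapping} to partition $[m]$ into blocks $S_1,\dots,S_n$ (according to which $TD_i$ owns each column) and then \emph{constructs} a matrix $R\in\M(m,\F)$ block-by-block so that $C_i'=TD_iR$ simultaneously for all $i$; this yields $\tC^{T}=T\tD R$, hence $T^{t}\tS_G=\tS_H^{R}T^{t}$, and invertibility of $R$ is forced by the equal dimensions. Your approach is more elementary and gives the sharper per-$T$ statement, avoiding the explicit construction of $R$. The paper's approach, on the other hand, keeps the change-of-basis matrix $R$ on the frontal-slice index visible throughout; this is the language used in the subsequent proof of \cref{prop:conjugate} (see \cref{claim:conj_technical}), where one perturbs $T$ column-by-column and repeatedly re-invokes the lemma.
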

\begin{proof}
For the only if direction, note that $\cS_G$ and $\cS_H$ being conjugate implies 
that there exist $T\in\GL(n, \F)$ and $R\in\GL(m, \F)$, such that 
$T^t\tS_G=\tS_H^RT^t$. (The use of $T^t$ is to make some notation in the following 
easier.) This translates to $\tC^T=T\tD R$. Therefore, for any $i\in[n]$, 
$C_i'=TD_iR$, which implies that $\colspan(C_i')\subseteq \colspan(TD_i)$.

For the if direction, by~\cref{obs:non-overlapping}, the non-zero 
columns of $TD_i$ over 
$i\in[n]$ are non-overlapping. Let $[m]=S_1\cup\dots\cup S_n$ be a partition such 
that the indices of non-zero columns in $TD_i$ are exactly in $S_i$. Suppose 
$TD_i=[u_1, \dots, u_m]$, where 
$u_i\in\F^n$. Let
$R=(r_{i,j})_{i,j\in[m]}\in\M(m, \F)$. 
Then the $k$th column of $TD_iR$ is $\sum_{j\in [m]}r_{j,k}u_j=\sum_{j\in 
S_i}r_{j,k}u_j$, where the equality is due to the fact that $u_{j}=0$ for $j\not\in 
S_i$. 
In other words, for any $i\in[n]$, only the columns of $R$ whose indices in $S_i$ are effective for the columns $TD_i$. Using the condition that
$\colspan(C_i')\subseteq \colspan(TD_i)$, we can set the rows of $R\in\M(m, 
\F)$ with indices in $S_i$, such that $C_i'=TD_iR$, for each $i\in[n]$. That is,  
$\tC^{T}=T\tD R$, which implies that $T^t\tS_G=\tS_H^{R}T^t$. Now we recall 
that the frontal slices of $\tS_G$ and $\tS_H$ are bases of $\cS_G$ and $\cS_H$ 
which are both $m$-dimensional. It follows that $R$ must be invertible, and 
$T^t\cS_G T^{-t}=\cS_H$.
\end{proof}

We are now ready to show that conjugacy implies isomorphism. 

\begin{proposition}\label{prop:conjugate}
Let $G=([n], E)$ and $H=([n], F)$ be directed graphs of size $m$. Let $\cS_G$ and 
$\cS_H$ be the associated graphical matrix spaces, respectively. If $\cS_G$ and $\cS_H$ are conjugate, then
$G$ and $H$ are isomorphic.
\end{proposition}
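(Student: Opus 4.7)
The plan is to leverage~\cref{lem:conj_key}, which converts conjugacy into a concrete column-span containment, and then extract an isomorphism from the nonzero pattern of $T$. By~\cref{lem:conj_key} I would fix $T\in\GL(n,\F)$ with $\colspan(C_i')\subseteq \colspan(TD_i)$ for all $i\in[n]$, where $C_i'=\sum_k T_{k,i}C_k$ is the $i$th frontal slice of $\tC^T$. Using~\cref{obs:non-overlapping} and the definition of $\tS_G$, if the $k$th edge of $G$ is $(u_k,v_k)$, then the $k$th column of $C_{k'}$ is $e_{v_k}$ when $k'=u_k$ and is zero otherwise, so the $k$th column of $C_i'$ equals $T_{u_k,i}\,e_{v_k}$. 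Hence $\colspan(C_i')=\langle e_v:v\in V_i\rangle$, where $V_i=\{v:\exists\,u,\ (u,v)\in E\text{ with }T_{u,i}\neq 0\}$, while $\colspan(TD_i)=T\langle e_w:w\in N_H^+(i)\rangle$ has dimension $|N_H^+(i)|$.

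Next, I would fix any permutation $\sigma$ of $[n]$ with $T_{\sigma(i),i}\neq 0$ for every $i$, which exists by the nonvanishing Leibniz expansion of $\det T$. Choosing $u=\sigma(i)$ in the definition of $V_i$ gives $V_i\supseteq N_G^+(\sigma(i))$, so $|N_G^+(\sigma(i))|\leq|V_i|\leq|N_H^+(i)|$. Summing over $i$ and using the identity $\sum_i|N_G^+(\sigma(i))|=|E|=\dim\cS_G=\dim\cS_H=|F|=\sum_i|N_H^+(i)|$ forces equality everywhere, so $V_i=N_G^+(\sigma(i))$ and moreover $\langle e_v:v\in N_G^+(\sigma(i))\rangle=T\langle e_w:w\in N_H^+(i)\rangle$ as subspaces of $\F^n$.

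Finally, for any $(i,w)\in F$ the $w$th column of $T$ lies in $T\langle e_{w'}:w'\in N_H^+(i)\rangle=\langle e_v:v\in N_G^+(\sigma(i))\rangle$, forcing $T_{k,w}=0$ for every $k\notin N_G^+(\sigma(i))$. Taking $k=\sigma(w)$, where $T_{\sigma(w),w}\neq 0$ by the choice of $\sigma$, forces $\sigma(w)\in N_G^+(\sigma(i))$, that is, $(\sigma(i),\sigma(w))\in E$. Hence $(i,w)\mapsto(\sigma(i),\sigma(w))$ injects $F$ into $E$, and since $|E|=|F|$ it is a bijection, so $\sigma^{-1}$ is the desired isomorphism $G\to H$.

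I expect the degree-summation step to be the main obstacle: the local double inequality $|N_G^+(\sigma(i))|\leq|V_i|\leq|N_H^+(i)|$ is too weak on its own, since if $V_i$ strictly contained $N_G^+(\sigma(i))$ for some $i$, the last step would only place $\sigma(w)$ inside $V_i$ rather than inside the specific out-neighborhood that records an edge of $G$. The global identity $|E|=|F|$ coming from $\dim\cS_G=\dim\cS_H$ is exactly what collapses every local inclusion into an equality simultaneously, and it is the place where the hypothesis that we are working with graphical (i.e., axis-aligned) matrix spaces is really used.
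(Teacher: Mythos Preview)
Your argument is correct. Both proofs start from \cref{lem:conj_key}, but then diverge. The paper proceeds iteratively: it shows that one can replace the first column of $T$ by a standard basis vector $e_i$ (chosen via the Laplace expansion) while preserving the containments $\colspan(C_i')\subseteq\colspan(TD_i)$, and then repeats this for each column until $T$ becomes a permutation matrix; the verification (Claim in the paper) requires a four-case analysis tracking how the modification affects both sides. Your approach is direct: you read off a permutation $\sigma$ from the nonzero pattern of $T$ in one step and use the global identity $\sum_i|N_G^+(\sigma(i))|=|E|=|F|=\sum_i|N_H^+(i)|$ to upgrade every local containment $N_G^+(\sigma(i))\subseteq V_i$ and $\colspan(C_i')\subseteq\colspan(TD_i)$ to an equality simultaneously. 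This collapses the iterative column-replacement and case analysis into a single counting step, and it also yields \cref{lem:conj_conseq} immediately: any permutation $\sigma$ with $T_{\sigma(i),i}\neq 0$ for all $i$ gives an isomorphism, which is exactly the hypothesis there (Laplace expansion of $T_{(i,j)}$ produces such a $\sigma$ through the prescribed entry). The paper's approach, on the other hand, shows more explicitly that the conjugating matrix itself can be deformed to a permutation matrix while staying a conjugacy, which is a slightly stronger structural statement about $\Conj$.
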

\begin{proof}
From $\cS_G$ and $\cS_H$, we construct $\tS_G, \tS_H\in\T(n\times n\times m, \F)$ 
and $\tC, \tD\in\T(n\times m\times n, \F)$ as in~\cref{def:conj}.
Since $\cS_G$ and $\cS_H$ are 
conjugate, there exist $T\in\GL(n, \F)$ and $R\in \GL(m, \F)$ such that 
$T^t\tS_G=\tS_H^RT^t$, which yields 
$\tC^{T}=T\tD R$. 

Denote $T=\begin{bmatrix}
t_1 & t_2 &\cdots & t_n
\end{bmatrix}$, where $t_i=(t_{i,1}, \cdots, t_{i,n})^t$ is the $i$th 
column. By the Laplace expansion, there exists $i\in[n]$, such that $t_{1,i}\neq 
0$ and 
$T'=\begin{bmatrix}
e_i & t_2 & \cdots & t_n
\end{bmatrix}$ is also invertible. 
\begin{claim}\label{claim:conj_technical}
Let $C_i'$ be the $i$th frontal slice of $\tC^{T'}$. Then for any $i\in[n]$, 
$\colspan(C_i')\subseteq \colspan(T'D_i)$. 
\end{claim}
Once~\cref{claim:conj_technical} holds, it would follow that 
$T'^t\cS_GT'^{-t}=\cS_H$  by~\cref{lem:conj_key}. We could then apply this procedure to $T'$ to set $t_2$ 
to be a standard basis vector, and so on, until we get a permutation matrix $P$ 
such that $P^{-1}\cS_GP=\cS_H$. This would allow us to conclude that $G$ and $H$ 
are isomorphic. 

We now prove~\cref{claim:conj_technical}.
\begin{proof}[{Proof of~\cref{claim:conj_technical}.}]
As the difference between $T$ and $T'$ lies on the first column, changing 
from $T$ to $T'$ has the following possible consequences: 
\begin{enumerate}
\item For $\tC^T$ and $\tC^{T'}$, their first frontal slices are different. 
\item For those frontal slices $D_j$ containing $e_1$ as 
a column, the $j$th frontal slices of $T\tD$ and $T'\tD$ are different.
\end{enumerate}
We now distinguish the following cases. 
\begin{enumerate}
\item[(a)] Suppose $e_1\notin\colset(D_j)$ for any $j\geq 2$. This is when neither change happens. Consider $\sum_{k\in[n]}t_{k, j}C_k$, which is equal to $TD_jR$. It is clear that $\colspan(C_j')=\colspan(\sum_{k\in[n]}t_{k, j}C_k)=\colspan(TD_j)=\colspan(T'D_j)$.

\item[(b)] Suppose $e_1\in\colset(D_j)$ for some $j\geq 2$. Then $t_1\in 
\colset(TD_j)$. 
Consider $\sum_{k\in[n]}t_{k, j}C_k$, which is equal to $TD_jR$. That is, 
$\colspan(\sum_{k\in[n]}t_{k, j}C_k)=\colspan(TD_j)$. By~\cref{obs:non-overlapping}, the columns of $\sum_{k\in[n]}t_{k, j}C_k$ 
are scaled standard basis vectors. As the $i$th entry of $t_1\in\colspan(\sum_{k\in[n]}t_{k, j}C_k)$ is non-zero, it follows 
that the $i$th standard basis vector $e_i$ is in $\colset(\sum_{k\in[n]}t_{k, 
j}C_k)$. It follows that $\colspan(\sum_{k\in[n]}t_{k, j}C_k)\supseteq 
\colspan(T'D_j)$. Since $T$ and $T'$ are both full-rank, we have 
$\dim(\colspan(\sum_{k\in[n]}t_{k, 
j}C_k))=\dim(\colspan(TD_j))=\dim(\colspan(T'D_j))$. We then have 
$\colspan(C_j')=\colspan(\sum_{k\in[n]}t_{k, j}C_k)= \colspan(T'D_j)$, where the 
first equality is due to the assumption that $j\geq 2$.

\item[(c)] Suppose $e_1\not\in \colset(D_1)$. The first frontal slice of $\tC^T$ 
is $\sum_{k\in[n]}t_{k, 1}C_k=TD_1 R$. Note that $t_{i,1}\neq 
0$, and $C_1'=C_i$. We have $\colspan(C_1')=\colspan(C_i)\subseteq 
\colspan(\sum_{k\in[n]}t_{k, 1}C_k)=\colspan(TD_1)=\colspan(T'D_1)$, where the inclusion is due to~\cref{lem:conj_key} and the
last equality is due to the assumption that $e_1\not\in \colset(D_1)$.

\item[(d)] Suppose $e_1\in\colset(D_1)$. This is when both changes happen: the 
first 
frontal slice changes from $\sum_{k\in[n]}t_{k, 1}C_k$ to $C_i$, and 
$t_1\in\colset(TD_1)$ changes to $e_i\in\colset(T'D_1)$. Still, we have $\colspan(\sum_{k\in[n]}t_{k, 1}C_k)= \colspan(T'D_1)$ by the argument for case (b), and $\colspan(C_1')=\colspan(C_i)\subseteq\colspan(\sum_{k\in[n]}t_{k, 1}C_k)$ by the argument for case (c). These allow us to deduce that 
$\colspan(C_1')\subseteq \colspan(T'D_1)$.\qedhere
\end{enumerate}
\end{proof}
The proof of~\cref{claim:conj_technical} concludes the proof of~\cref{prop:conjugate}.
\end{proof}

The proof of~\cref{prop:conjugate} also yields the following.
\begin{corollary}\label{lem:conj_conseq}
Let $G=([n], E)$ and $H=([n], F)$ be directed graphs. Let $\cS_G$ and 
$\cS_H$ be 
the corresponding 
graphical matrix spaces, respectively. Let $T=(t_{i,j})\in\GL(n, \F)$ 
satisfy that $T\cS_GT^{-1}=\cS_H$. For $i, j\in[n]$, let $T_{(i,j)}$ be the 
submatrix of $T$ obtained by deleting the $i$th row and the $j$th column. If 
$t_{i,j}\neq0$ and $T_{(i,j)}$ is invertible, then there exists an isomorphism $\pi$ from $G$ to $H$, 
such that $\pi(i)=j$. 
\end{corollary}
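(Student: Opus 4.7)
The plan is to adapt the iterative procedure in the proof of~\cref{prop:conjugate}. That proof shows that starting from any invertible matrix satisfying the conjugacy, one can perform a sequence of column replacements, each substituting a standard basis vector for a column and preserving the conjugacy relation by~\cref{claim:conj_technical}; after $n$ such replacements one obtains a permutation matrix $P$ which directly encodes an isomorphism between $G$ and $H$.

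For the corollary, I would observe that this iteration enjoys two sources of freedom that were not exploited in the proof of~\cref{prop:conjugate}: the columns can be processed in any order, and although~\cref{claim:conj_technical} is stated for the first column, its proof (the case analysis (a)--(d) combined with~\cref{lem:conj_key} and~\cref{obs:non-overlapping}) is completely symmetric in the choice of column and therefore applies verbatim to any fixed column. Exploiting these two freedoms, I would carry out the iteration by performing, as its very first step, the replacement that targets the column of the eventual permutation matrix which will encode the rule $i\mapsto j$ under the resulting isomorphism. The corollary's hypotheses $t_{i,j}\neq 0$ and $T_{(i,j)}$ invertible are exactly the ``non-zero entry plus invertible minor'' condition required by the appropriate column analog of~\cref{claim:conj_technical} to make this targeted replacement preserve the conjugacy. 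Once this step is carried out, the remaining columns can be processed in any order: the Laplace-expansion argument used in the proof of~\cref{prop:conjugate} continues to furnish a valid choice at each subsequent step, and the iteration produces a permutation matrix $P$ in the same conjugation orbit as $T$. Reading off the isomorphism from $P$ as in the proof of~\cref{prop:conjugate} then yields the desired $\pi$ with $\pi(i)=j$.

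The main obstacle is notational rather than conceptual. The proof of~\cref{prop:conjugate} is written in the form $T^t\cS_GT^{-t}=\cS_H$ for convenience, whereas the corollary is stated as $T\cS_GT^{-1}=\cS_H$. The hard part will be to carefully track the transpositions and the non-standard indexing convention used there (in which the column-vector notation $t_i=(t_{i,1},\dots,t_{i,n})^t$ is related by transpose to the standard matrix entries) to confirm that the corollary's precise condition on $(i,j)$ matches the iteration step that ultimately forces $\pi(i)=j$ in the isomorphism extracted from the final permutation matrix, rather than some transposed or dualized variant such as $\pi(j)=i$ or $\pi^{-1}(i)=j$.
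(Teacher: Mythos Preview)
Your proposal is correct and matches the paper's approach exactly: the paper gives no separate proof but simply records that the corollary is yielded by the proof of \cref{prop:conjugate}, which is precisely what you outline---run the same column-replacement iteration but make the hypothesised position $(i,j)$ the target of the very first replacement, noting (as you do) that \cref{claim:conj_technical} is symmetric in the choice of column. Your caveat about carefully tracking the transpose conventions to pin down whether the resulting isomorphism satisfies $\pi(i)=j$ rather than $\pi(j)=i$ is well placed; the paper is silent on this bookkeeping, and in its only application (\cref{claim:zero_block}) one has $G=H$, so either direction suffices after passing to $\pi^{-1}$ if necessary.
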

In contrast to~\cref{prop:congruence}, there is no embedding version of this correspondence: the following example illustrates that there exist directed graphs $G$ and $H$ such that $G$ is not isomorphic to any subgraph of $H$ while $\cS_G$ is conjugate to a subspace of $\cS_H$.
\begin{example}\label{example: no embedding version of conjugacy}
Let $G=([3],\{(1,1),(2,2)\})$ and $H=([3],[3]\times[3]\setminus\{(1,1),(3,3)\})$. Their graphical matrix spaces are 
\[
\cS_G=\left\{\begin{bmatrix}a&0&0\\0&b&0\\0&0&0\end{bmatrix}:~a,b\in\F\right\}~\text{and}~\cS_H=\left\{\begin{bmatrix}0&x_1&x_2\\y_1&z&x_3\\y_2&y_3&0\end{bmatrix}:~x_1,x_2,x_3,y_1,y_2,y_3,z\in\F\right\}.
\]
As $G$ has two self-loops while $H$ only has one, $G$ is not isomorphic to any subgraph of $H$. However, setting $T=\begin{bmatrix}1&0&1\\1&1&1\\0&1&1\end{bmatrix}\in\GL(3,\F)$, we have 
\[
T\cS_GT^{-1}=\left\{\begin{bmatrix}0&a&-a\\-b&a+b&-a\\-b&b&0\end{bmatrix}:~a,b\in\F\right\},
\]
which is a subspace of $\cS_H$.
\end{example}

\section{Vertex transitivity and conjugacy/congruence irreducibility}\label{subsec:transitivity}

Let $G=([n], E)$ be a directed graph. Let $\Aut(G)\leq\S_n$ be the automorphism 
group of $G$. Recall that $G$ is \emph{vertex-transitive}, if $\Aut(G)$ is a 
transitive 
group. 
Let $\cS\leq\M(n, \F)$. Let $\Conj(\cS):=\{T\in\GL(n, \F) \mid T\cS 
T^{-1}=\cS\}\leq\GL(n, \F)$. We say that $\cS$ is \emph{conjugacy irreducible}, if 
$\Conj(\cS)$ is irreducible as a matrix group. Let 
$\operatorname{Cong}(\mathcal{S}):=\{T\in\operatorname{GL}(n, \mathbb{F}) \mid 
T\mathcal{S} T^{t}=\mathcal{S}\}\leq\operatorname{GL}(n, \mathbb{F})$. We say that 
$\mathcal{S}$ is \emph{congruence irreducible}, if 
$\operatorname{Cong}(\mathcal{S})$ is irreducible as a matrix group.

\begin{proposition}\label{prop:transitive}
	Let $\F$ be a field of order $>2$. Let $G=([n], E)$ be a directed 
	graph, and $\cS_G\leq\M(n, \F)$ be the matrix space 
	associated with $G$. If $G$ is vertex-transitive, then $\cS_G$ is 
    conjugacy irreducible and congruence irreducible.
\end{proposition}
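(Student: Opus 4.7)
The plan is to exhibit a subgroup $\mathcal{H}\le\GL(n,\F)$ that is contained in both $\Conj(\cS_G)$ and $\Cong(\cS_G)$ and that already acts irreducibly on $\F^n$; irreducibility of the two larger groups then follows immediately. Specifically, I would take $\mathcal{H}$ to be generated by the invertible diagonal subgroup $D(n,\F)=(\F^*)^n$ together with the permutation matrices $\{P_\sigma:\sigma\in\Aut(G)\}$.

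The containment $\mathcal{H}\subseteq \Conj(\cS_G)\cap\Cong(\cS_G)$ is a direct computation on the spanning set $\{\E_{i,j}:(i,j)\in E\}$. For $D=\diag(d_1,\ldots,d_n)$ one has $D\E_{i,j}D^{-1}=(d_i/d_j)\E_{i,j}$ and $D\E_{i,j}D^t=d_id_j\E_{i,j}$, both of which still lie in $\cS_G$. For a permutation matrix $P_\sigma$ with $\sigma\in\Aut(G)$, the identity $P_\sigma^{-1}=P_\sigma^t$ together with a short calculation yields $P_\sigma\E_{i,j}P_\sigma^{-1}=P_\sigma\E_{i,j}P_\sigma^t=\E_{\sigma(i),\sigma(j)}$, which lies in $\cS_G$ precisely because $\sigma$ is an automorphism of $G$. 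Hence $\mathcal{H}$ sits inside both $\Conj(\cS_G)$ and $\Cong(\cS_G)$.

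Next I would show that $\mathcal{H}$ acts irreducibly on $\F^n$. Suppose, toward a contradiction, that $U\le\F^n$ is a non-zero, proper $\mathcal{H}$-invariant subspace, and pick $u\in U\setminus\{0\}$ with support $S=\{k:u_k\ne 0\}$. Because $|\F|>2$, I may choose $a\in\F^*$ with $a\ne 1$; for any fixed $i\in S$, the diagonal matrix $D_\lambda$ defined by $\lambda_i=a$ and $\lambda_k=1$ for $k\ne i$ satisfies $uD_\lambda-u=(a-1)u_ie_i\in U$, a non-zero scalar multiple of $e_i$. Thus $e_i\in U$ for some $i\in[n]$. Since $\Aut(G)$ acts transitively on $[n]$, the corresponding permutation representation acts transitively on $\{e_1,\ldots,e_n\}$, so $e_j\in U$ for every $j\in[n]$, forcing $U=\F^n$, a contradiction. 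This proves $\mathcal{H}$ is irreducible, and hence so are $\Conj(\cS_G)$ and $\Cong(\cS_G)$.

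No substantial obstacle arises in the argument; the only delicate point is the role of the hypothesis $|\F|>2$, which is exactly what allows the diagonal subgroup to separate the coordinates of an arbitrary $u\in U$ and to extract a standard basis vector from $U$. Over $\F_2$ the diagonal subgroup is trivial and this step breaks down, so this is where the restriction on $|\F|$ is genuinely used.
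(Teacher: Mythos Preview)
Your proof is correct and follows essentially the same approach as the paper: exhibit the group generated by the invertible diagonal matrices and the permutation matrices of $\Aut(G)$ inside both $\Conj(\cS_G)$ and $\Cong(\cS_G)$, and show this group already acts irreducibly on $\F^n$ using $|\F|>2$ together with the transitivity of $\Aut(G)$. Your irreducibility argument (extracting a standard basis vector via a well-chosen diagonal matrix) is a slightly more explicit version of the paper's terser observation that the diagonal action forces any invariant subspace to be a coordinate subspace.
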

\begin{proof}
	Let $P=\Aut(G)\leq\S_n$ be a transitive group. Embed 
	$P$ as a subgroup of $\GL(n, \F)$, and let $D$ be the group of invertible diagonal matrices 
	in $\GL(n, \F)$. By $P, D\leq\Conj(\cS_{G})$ (resp.\ $P, D\leq\Cong(\cS_{G})$), the group $H=\langle P, D\rangle$ is in 
	$\Conj(\cS_{G})$ (resp.\ $\Cong(\cS_{G})$). Note that $H$ is a subgroup of the monomial group. We claim that $H$ 
	is irreducible. To see this, by the action of $D$ and $|\F|>2$,\footnote{If $|\F|=2$, $H$ is reducible because of the all-one vector.} the only possible 
	invariant subspaces are sums of coordinate subspaces. Then the action of $P$ 
	ensures that these invariant subspaces cannot be proper. This proves the claim and implies $\cS_{G}$ is conjugacy irreducible and congruence irreducible.
\end{proof}

Now we prove that conjugacy and congruence irreducibility imply vertex-transitivity. To this end, we first show the graphs are regular and relate the automorphisms of graphs to the invertible matrices in $\Conj(\cS_{G})$ and $\Cong(\cS_{G})$. First of all, we recall some notation. Let $\tS_G\in\T(n\times n\times m, \F)$, 
$\tC\in\T(n\times m\times n, \F)$, and $C_i\in \M(n\times m, \F)$ be as in~\cref{def:conj}. Note that $C_i$ records the information of the 
out-neighbors of the vertex $i$, and $\rank(C_i)=\dim(\colspan(C_i))$ is equal to the out-degree of $i$.
Let $T\in\Conj(\cS_G)$ (resp.\ $T\in\Cong(\cS_G)$) and write $T=\begin{bmatrix}
t_1 \\ \vdots \\ t_n
\end{bmatrix}$, where $t_i=(t_{i,1}, \dots, t_{i,n})$ is the $i$th row of $T$. Then $\tC^{T^t}=T^t\tC R$ (resp.\ $\tC^{T^t}=T^{-1}\tC R$)
for some $R\in\GL(m, \F)$. Denote by $C_i'$ the $i$th frontal slice of $\tC^{T^t}$.

\begin{lemma}\label{lem:regularity}
	Let $G=([n], E)$ be a directed graph, and $\cS_G\leq\M(n, \F)$ be the matrix space associated with $G$. If $\cS_G$ is conjugacy irreducible or congruence irreducible, then $G$ is regular.
\end{lemma}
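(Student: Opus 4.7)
The plan is to prove the contrapositive: if $G$ is not regular (meaning both $d^+$ and $d^-$ are constant functions on $[n]$), then $\Conj(\cS_G)$ and $\Cong(\cS_G)$ each admit a nontrivial proper invariant coordinate subspace, contradicting irreducibility. The main dimensional invariant is $d(v) := \dim(v\cS_G)$ for a row vector $v \in \F^n$. Using the spanning set $\{v\E_{i,j} = v_i e_j : (i,j) \in E\}$ of $v\cS_G$, one sees immediately that $d(v) = |N^+(\supp(v))|$, where $N^+(S) := \{j : \exists\, i \in S,\, (i,j) \in E\}$; in particular $d(e_\ell) = d^+(\ell)$, the out-degree of $\ell$.

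The first step is to observe that $d$ is preserved under the right action of both $\Conj(\cS_G)$ and $\Cong(\cS_G)$ on $\F^n$. For $T \in \Conj(\cS_G)$, the relation $T\cS_G = \cS_G T$ gives $vT\cS_G = v\cS_G T$, whose dimension equals $d(v)$; for $T \in \Cong(\cS_G)$, the identity $T\cS_G = \cS_G T^{-t}$ yields the same conclusion. Taking $v = e_\ell$, the support $T_\ell$ of the $\ell$-th row of $T$ must satisfy $|N^+(T_\ell)| = d^+(\ell)$, which forces $d^+(j) \le d^+(\ell)$ for every $j \in T_\ell$. With $d := \max_i d^+(i)$ and $S_d := \{i : d^+(i) = d\}$, this yields $T(\ell, j) = 0$ whenever $\ell \notin S_d$ and $j \in S_d$, so the coordinate subspace $\langle e_j : j \notin S_d\rangle$ is invariant under every such $T$. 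Since $S_d \ne \emptyset$, irreducibility forces $S_d = [n]$, i.e.\ $G$ is out-regular.

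For in-regularity the plan is to run the dual argument on column vectors, using $\dim(\cS_G w) = |N^-(\supp(w))|$ together with the identities $\cS_G T^{-1} = T^{-1}\cS_G$ (conjugacy) and $\cS_G T^t = T^{-1}\cS_G$ (congruence). Setting $w = e_j$ and repeating the reasoning with $S^* := \{j : d^-(j) = \max d^-\}$ produces the invariant subspace $\langle e_i : i \in S^*\rangle$ in the conjugacy case (using that $\Conj(\cS_G)$ is closed under inversion, so that the zero-block constraint on $T^{-1}$ transfers to $T$) and $\langle e_i : i \notin S^*\rangle$ in the congruence case. I do not expect a serious obstacle; the argument is a direct combinatorial consequence of the normalizing identity together with the elementary support-dimension formulas. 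The only mildly fiddly aspect is bookkeeping, namely tracking which of $T,T^{-1},T^t,T^{-t}$ governs each action and verifying that each zero-block constraint translates back into invariance of a coordinate subspace under the group $\Conj(\cS_G)$ or $\Cong(\cS_G)$ itself.
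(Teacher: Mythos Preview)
Your argument is correct. At its heart it coincides with the paper's: both show that for any $T$ in $\Conj(\cS_G)$ or $\Cong(\cS_G)$, the entry $t_{\ell,j}$ can be nonzero only when $d^+(j)\le d^+(\ell)$, which forces a block-triangular shape for $T$ with respect to the level sets of $d^+$ and hence yields an invariant coordinate subspace unless $G$ is out-regular (and dually for in-degrees).

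The difference lies in packaging. The paper routes everything through the 3-way array apparatus of Section~7: it takes the slices $C_i$, invokes the non-overlap observation, and uses the key \cref{lem:conj_key} to bound $\dim\colspan(C_i')$; it also works with the \emph{minimum} out-degree set. Your version bypasses all of that machinery: the single identity $d(v)=|N^+(\supp v)|$ together with the normalizing relations $T\cS_G=\cS_G T$ (conjugacy) or $T\cS_G=\cS_G T^{-t}$ (congruence) immediately give $d(vT)=d(v)$, from which the block structure follows by inclusion of neighbourhoods; you use the \emph{maximum} out-degree set, which is an inessential choice. Your route is more elementary and self-contained for this lemma; the paper's route is less direct here but reuses infrastructure (\cref{def:conj}, \cref{obs:non-overlapping}, \cref{lem:conj_key}) that is needed anyway for the harder isomorphism results in the same section.
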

\begin{proof}

We first show that $\cS_G$ being conjugacy irreducible (resp.\ congruence 
irreducible) implies that $G$ is out-regular, i.e., every vertex is of the same 
out-degree. By way of contradiction, let $S\subseteq [n]$ be the set of vertices 
of the smallest out-degree $d$. Then for any given $i\in S$, 
$\dim(\colspan(T^tC_i))=\rank(C_i)=d$ (resp.\ 
$\dim(\colspan(T^{-1}C_i))=\rank(C_i)=d$). By~\cref{lem:conj_key} 
(resp. \cref{lem:conj_key} with changing the conclusion to 
$\colspan(C_i')\subseteq \colspan(T^{-t}D_i)$), $\dim(\colspan(C_i'))\leq d$. 

We claim that the 
indices of the non-zero entries of $t_i$ must be in $S$, i.e., $t_{i,j}=0$ for any 
$j\notin S$. To see this, recall that $C_i'=\sum_{k=1}^{n}t_{i,k}C_k$. So if for 
some $j\notin 
S$, $t_{i, j}\neq 0$, then this $C_j$ with $\rank(C_j)>d$ would be involved in 
$C_i'$, which 
would result in $\dim(\colspan(C_i'))>d$ by~\cref{obs:non-overlapping}. 

Thus, we have 
$t_{i,j}=0$ for any $i\in S$ and $j\notin S$. It follows that $\langle e_{i}, i\in 
S\rangle$ is an invariant subspace of $T$, giving the desired contradiction.

Recall that $C_i$'s are the horizontal slices of $\tS_G$. Applying the same 
argument to the vertical slices of $\tS_G$, we can deduce that $G$ is also 
in-regular, i.e., every vertex is of the same in-degree. It follows $G$ is regular.
\end{proof}

\begin{lemma}\label{same_neighbor}
	Let $G=([n], E)$ be a directed graph and $\cS_G\leq\M(n, \F)$ be the matrix space associated with $G$. If $\cS_G$ is conjugacy irreducible (resp.\ congruence irreducible), then for $T\in\Conj(\cS_G)$ (resp.\ $T\in\Cong(\cS_G)$) and $i,j\in[n]$, we have
	\begin{itemize}
		\item[(1)] vertices $k$ and $\ell$ share the same out-neighborhood if 
		$t_{i,k}$ and $t_{i,\ell}$ are non-zero;
		\item[(2)] vertices $k$ and $\ell$ share the same in-neighborhood if 
		$t_{k, j}$ and $t_{\ell, j}$ are non-zero.
	\end{itemize}
\end{lemma}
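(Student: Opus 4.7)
The plan is to deduce both statements from rank and column-span analyses applied to the 3-way arrays associated with $\tS_G$, together with the regularity of $G$ already proved in \cref{lem:regularity}. By that lemma, $G$ is $d$-regular for some $d$, so for every vertex $p$ both the out-neighborhood $N^+(p)$ and the in-neighborhood $N^-(p)$ have size $d$; I will use this throughout.

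For claim (1), I would reuse the set-up from the proof of \cref{lem:regularity}. Write $C'_i=\sum_{k=1}^n t_{i,k}\,C_k$ for the $i$-th frontal slice of $\tC^{T^t}$. By \cref{obs:non-overlapping}, each non-zero column of $C_k$ is a standard basis vector $e_q$ with $q\in N^+(k)$, and these non-zero columns occupy disjoint positions for different $k$. Consequently,
\[
\colspan(C'_i)\;=\;\bigl\langle e_q : q\in\textstyle\bigcup_{p:\,t_{i,p}\neq 0}N^+(p)\bigr\rangle,
\]
whose dimension equals the size of that union. On the other hand, $C'_i=T^tC_iR$ (conjugacy) or $C'_i=T^{-1}C_iR$ (congruence), so $\rank(C'_i)=\rank(C_i)=d$. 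Since each $|N^+(p)|=d$, the union has size at least $d$, with equality if and only if $N^+(p)$ coincides for all $p$ in the support of the $i$-th row of $T$. Specialising to $p=k,\ell$ with $t_{i,k},t_{i,\ell}\neq 0$ proves claim (1).

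For claim (2), I would perform the analogous analysis on the vertical slices $V_b\in\M(n\times m,\F)$ of $\tS_G$ defined by $V_b(a,c)=\tS_G(a,b,c)$, which satisfy $\rank(V_b)=d$ and $\colspan(V_b)=\langle e_a:a\in N^-(b)\rangle$. Translating the conjugacy/congruence identity from $\tS_G$ to the 3-way array of vertical slices yields, for conjugacy, $\sum_{b'}(T^{-1})(b',b)\,V_{b'}=T^{-1}V_bR'$ for some $R'\in\GL(m,\F)$, and for congruence the analogous identity with $(T^{-1})(b',b)$ replaced by $T(b,b')$. The same column-span/regularity comparison as in claim (1) then shows that the indices in the support of the weights share a common in-neighborhood. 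In the conjugacy case the weights form the $b$-th column of $T^{-1}$, so the conclusion reads: $(T^{-1})(k,b),(T^{-1})(\ell,b)\neq 0$ implies $N^-(k)=N^-(\ell)$; since $\Conj(\cS_G)$ is a group, applying this statement to $T^{-1}\in\Conj(\cS_G)$ yields claim (2) for $T$. The congruence case is handled by an analogous argument, combining the vertical-slice conclusion for both $T$ and $T^{-1}\in\Cong(\cS_G)$ with the observation that $T\in\Cong(\cS_{G^{\rm rev}})$, obtained by transposing the identity $T\cS_GT^t=\cS_G$.

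The main obstacle will be the careful bookkeeping: the conjugacy and congruence identities produce slightly different ``weights'' in the horizontal and vertical slice analyses, and it requires care to exploit the group structure of $\Conj(\cS_G)$ or $\Cong(\cS_G)$ in order to convert the natural conclusion of each rank argument (a column-of-$T^{-1}$ statement for conjugacy, a row-of-$T$ statement for congruence) into the precisely column-of-$T$ statement asked for in claim (2). Once this bookkeeping is set up, the passage from rank equality to set equality of neighborhoods is immediate from the non-overlapping-column structure of the slices together with the $d$-regularity of $G$.
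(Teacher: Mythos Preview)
Your argument for claim (1) is correct and is essentially the paper's: from $C_i'=\sum_p t_{i,p}C_p$ together with \cref{obs:non-overlapping} you get $\dim\colspan(C_i')=\bigl|\bigcup_{p:t_{i,p}\neq 0}N^+(p)\bigr|$, while \cref{lem:conj_key} and regularity force this to equal $d$. For claim (2) in the conjugacy case your use of the group structure is also fine; in fact one can avoid the detour through $T^{-1}$ by rewriting $T\tS_GT^{-1}=\tS_G^R$ as $\sum_q T(q,\ell)V_q=TV_\ell R^{-1}$, which already has the $\ell$-th \emph{column} of $T$ as weights and yields claim (2) directly. This is presumably what the paper's one-line ``same by vertical slices'' means.

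The genuine gap is claim (2) in the congruence case. Every rearrangement of $T\tS_GT^{t}=\tS_G^R$ on vertical slices produces weights coming from a \emph{row} of $T$ or of $T^{-1}$: one obtains $\sum_q T(\ell,q)V_q=T^{-1}V_\ell R$, and passing to $T^{-1}\in\Cong(\cS_G)$ or to $T\in\Cong(\cS_{G^{\rm rev}})$ only replaces $T$ by $T^{-1}$ or swaps $N^{+}$ with $N^{-}$; none of these yields a statement about columns of $T$. You flag this correctly as the ``main obstacle'' in your last paragraph, but the combination of tools you list does not actually close it. One way to finish: the row-based conclusions (horizontal and vertical slices together) show that each row of $T$ is supported in a single equivalence class $P_s$ of the relation ``same in- and out-neighbourhood''; then for each edge $(a,b)\in E$ the condition $T\E_{a,b}T^t\in\cS_G$ forces $(i,j)\in E$ whenever $T(i,a)T(j,b)\neq 0$, and a counting argument using regularity shows that each preimage $Q_s:=\{i:\text{row }i\text{ is supported in }P_s\}$ is itself one of the classes $P_{s'}$. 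Since column $j$ of $T$ is supported precisely in $Q_{[j]}$, this gives the column-based claim (2).
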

\begin{proof}
We first prove $(1)$ by contradiction. By~\cref{lem:regularity}, $G$ is 
$d$-out-regular, which implies that $\dim(\colspan(T^tC_i))=\rank(C_i)=d$ (resp.\ 
$\dim(\colspan(T^{-1}C_i))=\rank(C_i)=d$) for any $i\in[n]$. Now assume that 
vertex $k$ and $\ell$ do not share the same out-neighborhood. 
Thus, $\colspan(C_{k})\neq\colspan(C_{\ell})$. 
Note that $C_i'=\sum_{j=1}^{n}t_{i,j}C_j$ where $t_{i,k}$ and $t_{i,\ell}$ 
are non-zero. (Recall that $C_i'$ the $i$th frontal slice of $\tC^{T^t}$.) 
By~\cref{obs:non-overlapping}, we have
$\dim(\colspan(C_i'))>d$. This leads to a contradiction to the statement of 
\cref{lem:conj_key} (resp. \cref{lem:conj_key} with changing the conclusion 
to $\colspan(C_i')\subseteq \colspan(T^{-t}D_i)$). The proof for $(2)$ is the same 
by considering the vertical slices.
\end{proof}

We are now ready to prove that conjugacy irreducibility or congruence 
irreducibility implies vertex-transitivity.

\begin{proposition}
	Let $\F$ be a field of order $>2$. Let $G=([n], E)$ be a directed 
	graph, and $\cS_G\leq\M(n, \F)$ be the matrix space 
	associated with $G$. If $\cS_G$ is conjugacy irreducible, $G$ is vertex 
	transitive.
\end{proposition}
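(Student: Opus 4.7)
The plan is to argue the contrapositive: assuming $G$ is not vertex-transitive, I will construct a proper non-zero subspace of $\F^n$ invariant under every $T\in\Conj(\cS_G)$, contradicting conjugacy irreducibility. Throughout I will freely use the structural consequences already established under the hypothesis, namely that $G$ is regular (\cref{lem:regularity}) and that the row/column supports of any $T\in\Conj(\cS_G)$ lie in single $\sim_{\mathrm{out}}$/$\sim_{\mathrm{in}}$-classes (\cref{same_neighbor}).

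So assume $G$ is not vertex-transitive, and pick a proper non-empty $\Aut(G)$-orbit $O\subsetneq[n]$. Define
\[
U := \langle e_j : j\in O\rangle \le \F^n,
\]
a proper non-zero subspace. I will show $UT\subseteq U$ for every $T\in\Conj(\cS_G)$; unpacking, this says that for each $T\in\Conj(\cS_G)$ and each $j\in O$, the $j$-th row of $T$ is supported on $O$, i.e.\ $t_{j,k}=0$ whenever $k\notin O$.

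Fix such $T$ and $j$, and let $A$ be the $\sim_{\mathrm{out}}$-equivalence class containing the support of row $j$ of $T$, as supplied by \cref{same_neighbor}(1). By Laplace expansion of $\det(T)\neq 0$ along row $j$, there exists $k^*\in A$ with $t_{j,k^*}\neq 0$ and $T_{(j,k^*)}$ invertible; \cref{lem:conj_conseq}, applied with $H=G$, then yields $\pi\in\Aut(G)$ with $\pi(j)=k^*$, forcing $k^*\in O$. Hence $A\cap O\neq\emptyset$, and the whole argument reduces to the statement $A\subseteq O$.

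To establish $A\subseteq O$ I plan to play the symmetric game on the column side: if some $\ell\in A\setminus O$ had $t_{j,\ell}\neq 0$, then by \cref{same_neighbor}(2) column $\ell$ of $T$ is supported in a single $\sim_{\mathrm{in}}$-class $B$, and a Laplace expansion along column $\ell$ combined with \cref{lem:conj_conseq} would produce some $j^*\in B$ with $j^*\notin O$. This puts $j\in B\cap O$ and $j^*\in B\setminus O$, and simultaneously $k^*\in A\cap O$ and $\ell\in A\setminus O$. Exploiting \cref{obs:non-overlapping} together with both the row- and column-support constraints for $T$ (and for $T^{-1}\in\Conj(\cS_G)$), I expect to deduce a block decomposition of $T$ with respect to the partition $[n]=O\sqcup O^c$ analogous to the decomposition driving the proof of \cref{lem:regularity}; the off-diagonal block must then vanish, ruling out the entry $t_{j,\ell}\neq 0$. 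The main technical obstacle is exactly this step: $\sim_{\mathrm{out}}$-classes can in general straddle distinct $\Aut(G)$-orbits, so one really does need the in/out duality and invertibility of $T$ simultaneously to force the containment. Should the direct block argument prove too delicate, my fallback is to replace $O$ with the minimal union $\widetilde{O}$ of $\Aut(G)$-orbits that contains $O$ and is closed under both $\sim_{\mathrm{out}}$ and $\sim_{\mathrm{in}}$, and to run the above argument for $\widetilde{U}:=\langle e_j:j\in\widetilde{O}\rangle$ instead; since regularity (\cref{lem:regularity}) and $|\F|>2$ ensure enough flexibility that $\widetilde{O}\subsetneq[n]$ whenever $G$ is not vertex-transitive, $\widetilde{U}$ is still a proper non-zero invariant subspace, which suffices for the contradiction.
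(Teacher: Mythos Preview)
Your outline has a genuine gap at the crucial step. You correctly identify that from $T\in\Conj(\cS_G)$ and $j\in O$, Laplace expansion along row $j$ together with \cref{lem:conj_conseq} only gives you \emph{some} $k^*$ in the row support of $T$ with $k^*\in O$; the problem is handling a putative $\ell$ in the row support with $\ell\notin O$, which is exactly the case where $t_{j,\ell}\neq 0$ but the minor $T_{(j,\ell)}$ is singular (so \cref{lem:conj_conseq} does not apply directly). Your proposed ``block decomposition'' via the column game is left at the level of an expectation, and your fallback---closing $O$ up under $\sim_{\mathrm{out}}$, $\sim_{\mathrm{in}}$, and $\Aut(G)$-orbits to get $\widetilde{O}$---rests on the unproved claim that $\widetilde{O}\subsetneq[n]$. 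This is a purely graph-theoretic assertion about regular non-vertex-transitive digraphs; the hypothesis $|\F|>2$ is irrelevant to it, and nothing in \cref{lem:regularity} or \cref{same_neighbor} yields it.

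The paper resolves this obstruction by a different mechanism: a \emph{perturbation} of $T$. Assuming $t_{j,\ell}\neq 0$ with $\ell\notin O$, one locates indices $i,\ell'$ with $t_{j,i}\neq 0$, $T_{(j,i)}$ invertible, $t_{\ell',\ell}\neq 0$, and $T_{(j,i),(\ell',\ell)}$ invertible, and then replaces the single entry $t_{\ell',i}$ by a parameter $a$ to form $T(a)$. Two applications of \cref{same_neighbor} (first to row $j$, giving $\colspan C_i=\colspan C_\ell$, then to column $\ell$) show that this one-entry change does not disturb the conditions of \cref{lem:conj_key}, so $T(a)\in\Conj(\cS_G)$ for every $a$. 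Since $\det T(a)$ and $\det T(a)_{(j,\ell)}$ are affine in $a$ with the latter having nonzero leading coefficient $\det T_{(j,i),(\ell',\ell)}$, the assumption $|\F|>2$ lets one choose $a$ making both nonzero; then \cref{lem:conj_conseq} applied to $T(a)$ produces an automorphism sending $j$ to $\ell$, the desired contradiction. This perturbation idea is the missing ingredient in your argument.
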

\begin{proof}
    Recall that $T=(t_{i,j})\in\Conj(\cS_G)$. We use $T_{(i,j)}$ to denote the 
    matrix by deleting the 
	$i$th row and $j$th column from $T$, and $T_{(i,j),(i^{\prime},j^{\prime})}$ 
	to denote the matrix by deleting the $i$th and $i^{\prime}$th rows as well as 
	$j$th and $j^{\prime}$th columns from $T$, and so on. 
	
	By way of contradiction, suppose $G$ is not vertex-transitive. Then the action 
	of $\aut(G)\leq\S_n$ on $[n]$ has at least two orbits.
    Let $S\subset [n]$ be one orbit of $\Aut(G)$. 
    By a permutation of the vertices if necessary, we can set 
	$S=[s]$ for $s=|S|$. 
	\begin{claim}\label{claim:zero_block}
		For any $i\in[s]$ and $j\in[n]\setminus [s]$, $t_{i,j}=0$.
	\end{claim}
	\begin{proof}
		By contradiction and without loss of generality, assume $t_{1, j}\neq 0$ for some 
		$j\geq s+1$. Note that no automorphism of $G$ can send any $i\in [s]$ to any 
		$j\in [n]\setminus [s]$ and vice versa. By~\cref{lem:conj_conseq}, $T_{(1, 
			j)}$ must be singular. On the other hand, since $T$ is invertible, there exists $i\in[n]$ such that $t_{1, i}\neq 0$ and $T_{(1,i)}$ is invertible. 
		Then we can find $\ell\in[n]\setminus\{1\}$ such that $t_{\ell,j}\neq0$ and $T_{(1, i), (\ell, j)}$ is invertible. 
		Let $T(a)$ be the matrix obtained by replacing the 
		$(\ell, i)$th entry in $T$ with $a\in \F$. By the Laplace expansion of $T(a)_{(1,j)}$ with respect to the $i$th column, it follows that $\det(T(a)_{(1, 
			j)})=\lambda a+\gamma$ for some $\gamma\in \F$, where 
		$\lambda=\det(T_{(1,j),(\ell,i)})=\det(T_{(1,i),(\ell,j)})\neq 0$. This means that $T(a)_{(1, j)}$ is 
		invertible for all but one of $a\in\F$. Similarly, $\det(T(a))=\lambda' 
		a+\gamma'$ for some $\lambda'=\det(T_{(\ell,i)})$ and $\alpha'\in\F$, which is not the zero polynomial. So for all but at most two of $a\in \F$, 
		$\det(T(a))$ and $\det(T(a)_{(1, j)})$ are both nonzero. 
        Since $|\F|>2$, we can fix some $a\in \F$ such that $T(a)$ and $T(a)_{(1, 
		j)}$ are both invertible. Note that the 
		$(1,j)$th entry of $T(a)$ is $t_{1,j}\neq 0$. If $T(a)\in \Conj(\cS_G)$, by~\cref{lem:conj_conseq}, 
		we can find an automorphism $\pi$ of $G$ sending $1$ to $j$, which would
		contradict the assumption of orbits of $\Aut(G)$.
		Recall that $C_i'$ is the $i$th frontal slice of $\tC^{T^t}$ and denote by $C_i^{\prime}(a)$ the $i$th frontal slice of $\tC^{T(a)^{t}}$. We are going to use~\cref{lem:conj_key} to show $T(a)\in \Conj(\cS_G)$. To this end, we shall prove that, changing $T$ to $T(a)$ does not affect $\colspan(C_{k}^{\prime})$ and $\colspan(TC_{k})$ for any $k\in[n]$.
		
		Firstly, we claim that 
		$\colspan(C_{k}^{\prime})=\colspan(C_{k}^{\prime}(a))$ for any $k\in[n]$. 
		Since the only difference between $T$ and $T(a)$ is the $(\ell, i)$th entry, 
		the claim clearly holds for all $k\in[n]\setminus\{\ell\}$. Furthermore, the only 
		difference between $C_{\ell}^{\prime}$ and $C_{\ell}^{\prime}(a)$ as the 
		linear combination of $C_{k}$'s is the coefficient of $C_{i}$, and the only problematic choice is when $a=0$. To see this is not a problem,
		note that $t_{1,i}$ and $t_{1,j}$ are both non-zero, by (1) of~\cref{same_neighbor}, vertices 
		$i$ and $j$ share the same out-neighborhood in $G$, thus 
		$\colspan(C_{i})=\colspan(C_{j})$. 
		Since $t_{\ell,j}\neq0$, $C_{j}$ is involved in both the linear combinations of $C_{\ell}^{\prime}$ and $C_{\ell}^{\prime}(a)$, which 
		implies that 
		$\colspan(C_{\ell}^{\prime})=\colspan(C_{\ell}^{\prime}(a))$. 
		
		Secondly, we 
		claim that $\colspan(T(a)C_{k})=\colspan(TC_{k})$ for any $k\in[n]$. To see 
		this, note that $T(a)C_k$ and $TC_k$ are different if and only if $\ell$ is in 
		the out-neighborhood of $k$; in other words, $k$ is in the in-neighborhood 
		of $\ell$. 
		Suppose this holds, and the only different column is $t_\ell^t$ and $t_\ell^t(a)$ (whose $i$th coordinate is replaced by $a$). 
		Since $t_{1, j}$ and $t_{\ell, j}$ are both 
		non-zero, ~\cref{same_neighbor} (2) ensures that the in-neighborhoods of 
		$1$ and $\ell$ are the same. It follows that $1$ is also in the 
		out-neighborhood of $k$, which means that $t_1^t$ and $t_\ell^t$ are both 
		in 
		$TC_k$. Recall that the columns of $C_k$ are standard column basis vectors and $C_k'=\sum_{k'\in[n]}t_{k,k'}C_{k'}$. By~\cref{obs:non-overlapping}, $\colspan(C_k')$ is spanned by the standard basis (or more precisely, the set $\{e^t_{k'}:t_{k,k'}\neq 0~\&~(k,k')\in E\}$). Since $\colspan(TC_k)=\colspan(C_k')$ and the $i$th coordinate of $t_1$ is non-zero, $e^t_i\in 
		\colspan(C_k')$. So changing the $i$th coordinate of $t_\ell$ to $a$, as long as the 
		dimension of $\colspan(T(a)C_{k})$ does not decrease, does not affect $\colspan(TC_k)$. These conclude that
		$\colspan(TC_k)=\colspan(T(a)C_k)$. 
		
		Since $T(a)\in \Conj(\cS_G)$, $T(a)_{(1, j)}$ are invertible, and the $(i,j)$th entry of $T(a)$ is $t_{i,j}\neq0$, by~\cref{lem:conj_conseq}, there exists an automorphism $\pi$ of $G$ sending $1$ to $j$, which gives us the desired contradiction.
    \end{proof}
	\cref{claim:zero_block} implies that all $T\in\Conj(\cS_G)$ are reducible with a common invariant subspace $\langle e_{i}, i\in S\rangle$, and thereby contradicts $\cS_G$ being conjugacy irreducible.
\end{proof}

\begin{proposition}
	Let $G=([n], E)$ be a directed 
	graph, and $\cS_G\leq\M(n, \F)$ be the matrix space 
	associated with $G$. If $\cS_G$ is congruence irreducible, $G$ is vertex 
	transitive.
\end{proposition}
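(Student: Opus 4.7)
The plan is to mirror the proof of the conjugacy counterpart, substituting $\cref{cor:cong_conseq}$ for $\cref{lem:conj_conseq}$, and proving a ``zero block'' claim analogous to $\cref{claim:zero_block}$. Suppose for contradiction that $\cS_G$ is congruence irreducible while $G$ is not vertex-transitive. Then $\Aut(G)$ has at least two orbits on $[n]$; fix one orbit $S \subsetneq [n]$ and, after relabeling, assume $S = [s]$. The goal is to show that for every $T = (t_{i,j}) \in \Cong(\cS_G)$, every $i \in [s]$, and every $j \in [n]\setminus[s]$, one has $t_{i,j} = 0$. Once established, this yields that $\langle e_i : i \in [s]\rangle$ is a common invariant subspace for all elements of $\Cong(\cS_G)$, contradicting congruence irreducibility.

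To prove the zero block claim, suppose to the contrary that $t_{i_0,j_0} \neq 0$ for some $i_0 \in [s]$ and $j_0 \in [n]\setminus[s]$. I would pick indices $i^\ast, \ell$ and modify the $(\ell, i^\ast)$ entry of $T$ to a parameter $a$, producing $T(a)$. Because $T$ is invertible, by iterated Laplace expansion one can choose $i^\ast$ with $t_{i_0, i^\ast} \neq 0$ and $T_{(i_0, i^\ast)}$ invertible, and then $\ell \neq i_0$ with $t_{\ell, j_0} \neq 0$ and $T_{(i_0, i^\ast),(\ell, j_0)}$ invertible. As in the conjugacy proof, $\det T(a)$ and $\det T(a)_{(i_0, j_0)}$ are nonzero affine polynomials in $a$ (their leading coefficients being $\det T_{(\ell, i^\ast)}$ and $\pm\det T_{(i_0, i^\ast),(\ell, j_0)}$, respectively), so for generic $a \in \F$ (and such an $a$ exists since $|\F|>2$) both are nonzero. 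The Leibniz expansion of $\det T(a)_{(i_0, j_0)}$ then yields a permutation on the complementary index set which, combined with the still-nonzero entry at position $(i_0, j_0)$, gives a permutation $\sigma$ on $[n]$ with $\sigma(i_0) = j_0$ and $(i, \sigma(i))$-entry of $T(a)$ nonzero for every $i$. Once $T(a) \in \Cong(\cS_G)$ is established, $\cref{cor:cong_conseq}$ turns $\sigma$ into an automorphism of $G$ sending $i_0 \in S$ to $j_0 \notin S$, the desired contradiction.

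I expect the main obstacle to be verifying $T(a) \in \Cong(\cS_G)$. The conjugacy argument achieved this via $\cref{lem:conj_key}$, a characterization of $\Conj(\cS_G)$ in terms of column-span containments derived from the sliced $3$-way array $\tC$, followed by a four-case analysis showing that altering the single entry $(\ell, i^\ast)$ preserves those column spans (using $\cref{same_neighbor}$ to equate the out-neighborhoods of vertices whose columns appear with nonzero weight in the same row of $T$). The analogous step here requires a congruence counterpart of $\cref{lem:conj_key}$: because $T$ acts on both sides of $\cS_G$, the condition $T(a)\cS_G T(a)^t = \cS_G$ couples the left-column spans and the right-column spans simultaneously, so the characterization must track both, and the case analysis must invoke both parts (1) and (2) of $\cref{same_neighbor}$ symmetrically. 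Once this bookkeeping is set up, verifying that the spans on both sides survive the one-entry perturbation proceeds in the same spirit as the conjugacy proof, but with a more involved case structure.
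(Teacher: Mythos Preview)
Your plan diverges substantially from the paper's argument, and the divergence is at exactly the point you flag as the main obstacle. The paper does \emph{not} perturb $T$ at all, and never develops a congruence analogue of \cref{lem:conj_key}. Instead it exploits the extra rigidity of congruence encoded in \cref{cor:cong_conseq}: \emph{every} permutation hitting only nonzero entries of $T$ is an automorphism, not just some permutation obtained after modifying $T$. The paper's route is: (i) first show that congruence irreducibility forces $G$ to have either all self-loops or none (by examining $T\E_{j,j}T^t$); (ii) then, given $t_{i,j}\neq 0$, take the automorphism $\sigma$ coming from \cref{cor:cong_conseq}, compose $T$ with the permutation matrix of $\sigma^{-1}$ to land $PT\in\Cong(\cS_G)$ with nonzero entries at $(\sigma(i),\sigma(i))$, $(j,j)$, and $(\sigma(i),j)$; (iii) apply both parts of \cref{same_neighbor} to conclude $\sigma(i)$ and $j$ have identical in- and out-neighborhoods, so by the self-loop dichotomy the transposition $\sigma'=(\sigma(i)\ j)$ is an automorphism, and $\sigma'\circ\sigma$ sends $i$ to $j$. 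This is short, uses no determinant gymnastics, and incidentally avoids the hypothesis $|\F|>2$ that your perturbation step would need.

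Your approach, by contrast, leaves a real gap. You correctly identify that a congruence version of \cref{lem:conj_key} would have to control column spans on both sides simultaneously, but you do not set it up, and it is not clear that the non-overlapping structure of \cref{obs:non-overlapping} survives in a form that makes the one-entry perturbation argument go through: in congruence the same matrix $T$ hits the horizontal and vertical slices at once, so a change at $(\ell,i^\ast)$ perturbs both the ``out'' and ``in'' slicings, and the two halves of the case analysis interact. Until that characterization is written down and the case analysis actually carried out, the proposal is a plan rather than a proof. The paper's self-loop plus transposition trick sidesteps this entirely.
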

\begin{proof}
    We first claim that $G$ must either have all-self-loops or no self-loops. If not, up to a permutation of vertices, we can assume that $\E_{i,i}\notin\mathcal{S}_{G}$ but $\E_{j,j}\in\mathcal{S}_{G}$ for all $i\in[s]$ and $j\in[n]\setminus[s]$. Note that for $T\in\Cong(\mathcal{S}_{G})$ and $j\in[n]\setminus[s]$, $T\E_{j,j}T^{t}=[t_{k,j}t_{\ell,j}]_{k,\ell\in[n]}\in\cS_G$. Since $\E_{i,i}\notin\mathcal{S}_{G}$ for $i\in[s]$, $t_{i,j}t_{i,j}=0$ for any $(i,j)\in [s]\times([n]\setminus[s])$.
    This implies that $\Cong(\cS_G)$ is reducible with a common invariant subspace $\langle e_{i}\mid i\in [s]\rangle$, which is a contradiction. 

    Now we claim that, for $T\in\Cong(\mathcal{S}_{G})$, if $t_{i,j}\neq0$ for 
    some $i\neq j$, then there exists an automorphism $\pi$ of $G$, such that 
    $\pi(i)=j$. Since $T\in\Cong(\mathcal{S}_{G})$, by~\cref{cor:cong_conseq}, 
    there exists an automorphism $\sigma$ of $G$ such that $t_{i,\sigma(i)}\neq0$ 
    and $t_{\sigma^{-1}(j),j}\neq0$. If $\sigma(i)= j$, then we are done. So 
    assuming $\sigma(i)\neq j$ and $\sigma^{-1}(j)\neq i$, we embed 
    $\sigma^{-1}\in\Aut(G)$ as a matrix $P\in\GL(n,\F)$. Then we have 
    $PT\in\Cong(\mathcal{S}_{G})$ with the $(\sigma(i),\sigma(i))$th entry, the 
    $(j,j)$th entry, and the $(\sigma(i),j)$th entry all being non-zero. 
    By~\cref{same_neighbor}, it implies that vertex $\sigma(i)$ and $j$ share the 
    same (in- and out-)neighborhood. Since $G$ either has all-self-loops or no 
    self-loops, the transposition of $\sigma(i)$ and $j$, denoted as $\sigma'$, is 
    an automorphism of 
    $G$. It follows that $\pi=\sigma^{\prime}\circ\sigma$ is an automorphism of 
    $G$ sending $i$ to $j$.

	We prove the proposition by contradiction. Suppose $\cS_G$ is congruence 
	irreducible but $G$ is not vertex-transitive. Then $\aut(G)$ has at least two 
	orbits. Let $S\subset [n]$ be one orbit of $\aut(G)$. 
    Then for any $T\in\Cong(\mathcal{S}_{G})$ and for any $(i,j)\in 
	S\times([n]\setminus S)$, the $(i,j)$th entry of $T$ must be zero, 
    as otherwise there exists an automoprhism of $G$ sending $i$ to $j$ by the 
    claim in the last paragraph, contradicting $i$ and $j$ are in different orbits 
    of $\Aut(G)$. 
    It follows that 
	$\Cong(\cS_G)$ is reducible due to the common invariant subspace $\langle 
	e_{i}\mid i\in S\rangle$, contradicting the assumption that $\cS_G$ is 
	congruence irreducible.
\end{proof}

\section{Connections to quantum information theory}\label{sec: quantum}

In this section, we demonstrate connections of our results with some known 
results on quantum channels. We put such connections into two categories, 
which correspond to the two main approaches of viewing quantum channels as  
generalizations of graphs. 

Before going into the details, we collect some basic notions from quantum 
information theory. To be consistent with the quantum information literature, we 
identify $u\in\mathbb{C}^n$ as \emph{column vectors} and matrices $A\in 
M(n,\mathbb{C})$ acting on $u$ \emph{from the left}, i.e.,~$Au\in\mathbb{C}^n$. 
Recall that a linear map $\Phi: \M(n,\mathbb{C})\to \M(n',\mathbb{C})$ is 
\emph{completely positive} (CP) if for any $d\in\N$ and any positive semidefinite 
matrix 
$X\in\M(d,\mathbb{C})\otimes 
\M(n,\mathbb{C})$, the matrix 
$(\mathrm{id}_d\otimes\Phi)(X)\in\M(d,\mathbb{C})\otimes\M(n',\mathbb{C})$ is 
positive semidefinite, where $\mathrm{id}_d:\M(d,\mathbb{C})\to\M(d,\mathbb{C})$ 
is the identity map on $\M(d, \mathbb{C})$. 
Every CP linear map $\Phi: \M(n,\mathbb{C})\to \M(n',\mathbb{C})$ admits the 
Choi--Kraus representation as follows. There exist matrices 
(Choi--Kraus operators) $E_1,\dots,E_m\in \M(n'\times n,\mathbb{C})$ such that 
$\Phi(X)=\sum_{i=1}^m E_i X E_i^*$. While Choi--Kraus representations of a CP map 
$\Phi$ may not be unique, they span the same matrix space 
$\cS_\Phi=\langle E_1,\dots, E_m\rangle$. We say $\Phi$ is \emph{trace-preserving} 
(TP) if 
$\operatorname{Tr}(\Phi(X))=\operatorname{Tr}(X)$ for any $X\in \M(n,\mathbb{C})$, and $\Phi$ is 
\emph{unital} if $\Phi(I_n)=I_{n'}$. In terms of Choi--Kraus operators, $\Phi$ is 
trace-preserving if $\sum_{i=1}^m E_i^*E_i=I_n$ and $\Phi$ is unital if 
$\sum_{i=1}^m E_iE_i^*=I_{n'}$. Quantum channels are trace-preserving CP maps, and unital quantum channels are natural quantum generalizations of doubly stochastic matrices. 

\subsection{Quantum channels and transition matrices} 

A transition matrix $P=(p_{i,j})_{i,j\in[n]}\in \M(n,\R_{\geq 0})$ is a column 
stochastic matrix, i.e.,~it satisfies that $\sum_{i=1}^n p_{i,j}=1$ for any 
$j\in[n]$. Such a matrix $P$ describes a discrete Markov chain where the 
probability of moving from $j$ to $i$ is given by $p_{i,j}$.\footnote{In the 
theory of discrete Markov chains, the transition matrix normally acts on 
probability row vectors from right. For consistency with quantum channels, we 
translate the action to acting from the left on probability column vectors; see 
also~\cite{quasirandom}.} The underlying directed graph $G$ of a transition matrix 
$P$ is naturally defined with vertex set $[n]$ and arc set $E=\{(i,j) \mid 
p_{j,i}\neq 0\}$.

Quantum channels can be viewed as a natural quantum generalization of transition 
matrices. Indeed, associate each transition matrix $P=(p_{i,j})_{i,j\in[n]}$ with 
the CP map $\Phi_P: \M(n,\mathbb{C})\to\M (n,\mathbb{C})$ defined as 
\begin{equation}\label{eq: CMC to QMC}
\Phi_P(X)=\sum_{i,j=1}^n p_{i,j}\E_{i,j} X \E_{i,j}^*
\end{equation}
for any $X\in \M(n,\mathbb{C})$. It is clear that 
\[
\Phi_P(I_n) = \sum_{i,j=1}^n p_{i,j} \E_{i,j}^*\E_{i,j}=\sum_{i,j=1}^n p_{i,j} \E_{j,j}=\sum_{j=1}^n \left(\sum_{i=1}^n p_{i,j}\right) \E_{j,j}=\sum_{j=1}^n \E_{j,j}= I_n,
\]
where the second last equality uses the fact that $P$ is column stochastic. Thus $\Phi_P$ is a quantum channel. To realize the action of $P$ on probability vectors, we apply $\Phi_P$ on diagonal density matrices (positive semidefinite matrices with unit trace)~\cite{quasirandom}. Observe that $\cS_{\Phi_P}=\langle \sqrt{p_{i,j}} \E_{i,j}\mid p_{i,j}\neq 0\rangle=\cS_G^*$,
where $G$ is the underlying directed graph of $P$ and the adjoint space $\cS^*$ of 
$\cS$ is defined as $\{B^*:~B\in\cS\}$.
This illustrates a natural way to generalize directed graphs to 
quantum channels. 

\paragraph{Irreducibility of transition matrices.}
We first connect the irreducibility of a quantum channel $\Phi:\M(n, \mathbb{C})\to\M(n, 
\mathbb{C})$ with the 
irreducibility of its underlying matrix space $\cS_\Phi$. A quantum channel $\Phi$ 
is 
\emph{irreducible}, if the only orthogonal projections $P$ (i.e., $P^2=P$ and $P^*=P$) 
satisfying $\Phi(P\M(n,\mathbb{C})P)\leq P\M(n,\mathbb{C})P$ are $0$ and 
$I_n$ (cf.~\cite{EH78,QCO}). We first observe that the irreducibility notion for 
quantum channels 
coincides with the irreducibility notion for their underlying matrix spaces. 
\begin{proposition}\label{obs: channel irred. and space irred.}
A CP map $\Phi:\M(n, \mathbb{C})\to\M(n, \mathbb{C})$ is irreducible if and only if $\cS_\Phi$ is 
irreducible.
\end{proposition}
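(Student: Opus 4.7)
The plan is to translate both notions of irreducibility into statements about a common non-trivial subspace $U\leq\mathbb{C}^n$ and its orthogonal projection $P$. On the matrix-space side, $\cS_\Phi$ is reducible iff there exists such a $U$ with $E_i(U)\subseteq U$ for every Choi--Kraus operator $E_i$ (since the $E_i$ span $\cS_\Phi$); this is equivalent to $E_iP = PE_iP$ for all $i$, and taking adjoints, $PE_i^* = PE_i^*P$. On the channel side, we work directly with the given condition $\Phi(P\M(n,\mathbb{C})P)\leq P\M(n,\mathbb{C})P$. The two implications then reduce respectively to a short computation and a positivity argument, so the proof will be organized around these two directions.

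For the ``$\cS_\Phi$ reducible $\Rightarrow$ $\Phi$ reducible'' direction, I would assume $E_iP = PE_iP$ and $PE_i^* = PE_i^*P$ for all $i$, and directly compute, for any $Y\in\M(n,\mathbb{C})$,
\[
\Phi(PYP) \;=\; \sum_i E_i PYP E_i^* \;=\; \sum_i (PE_iP)\,Y\,(PE_i^*P) \;=\; P\,\Phi(PYP)\,P,
\]
so $\Phi(PYP)\in P\M(n,\mathbb{C})P$, exhibiting the same $P$ as a witness to the reducibility of $\Phi$. The non-triviality of $P$ carries over from that of $U$.

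For the converse, the plan is to probe $\Phi$ with rank-one inputs $Y=vv^*$ where $v\in U$; then $PYP=vv^*$, and the hypothesis gives $\Phi(vv^*) = \sum_i (E_iv)(E_iv)^* \in P\M(n,\mathbb{C})P$. The crucial step uses positivity: each summand $(E_iv)(E_iv)^*$ is PSD, so for any $w\in U^\perp$,
\[
0 \;=\; w^*\Phi(vv^*)w \;=\; \sum_i |\langle w,\,E_iv\rangle|^2,
\]
forcing $\langle w, E_iv\rangle = 0$ for every $i$ and every $w\in U^\perp$. Hence $E_iv\in U$ for all $i$ and all $v\in U$, which is exactly invariance of $U$ under $\cS_\Phi$. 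The main subtlety is precisely this PSD-additivity argument: it is what lets us pass from the single aggregate condition on $\Phi(vv^*)$ to individual conditions on each Choi--Kraus operator, and this is the place where the completely positive structure of $\Phi$ (rather than mere linearity) is essential.
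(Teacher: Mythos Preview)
Your proof is correct and follows essentially the same approach as the paper: both directions use the orthogonal projection $P$ onto an invariant subspace, and the key step in the ``$\Phi$ reducible $\Rightarrow$ $\cS_\Phi$ reducible'' direction is exactly the positivity argument $0=w^*\Phi(vv^*)w=\sum_i |w^*E_iv|^2$ to extract $E_iv\in U$ from the aggregate condition. The paper phrases this via $\colspan(\Phi(u_ju_j^*))=\langle E_iu_j\rangle$ for basis vectors $u_j$, while you work with arbitrary $v\in U$, but the substance is identical.
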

\begin{proof}
Let $E_1, \dots, E_m$ be a set of Choi--Kraus operators for $\Phi$. 
We first show that $\Phi$ being reducible implies that $\cS_\Phi$ is reducible. 
Let $P$ be a nontrivial projection such that 
$\Phi(P\M(n,\mathbb{C})P)\leq P\M(n,\mathbb{C})P$. Let $u_1,\dots,u_k\in \mathbb{C}^n$ be 
an 
orthonormal basis of $\colspan(P)$, so $P\M(n,\mathbb{C})P=P\M(n, \mathbb{C})P^*=\langle 
u_iu_j^*\mid i, j\in[k]\rangle$. Then for each $j\in[k]$,
\[
\Phi(u_ju_j^*)=\sum_{i=1}^m E_iu_ju_j^*E_i^*\in \langle u_iu_j^*\mid 
i,j\in[k]\rangle.
\]

We now claim that $\colspan(\Phi(u_ju_j^*))= \langle E_iu_j\mid i\in[m]\rangle$.
First, it is straightforward to verify that $\colspan(\Phi(u_ju_j^*))\leq \langle 
E_iu_j\mid i\in[m]\rangle$. Second, for any $v\in\colspan(\Phi(u_ju_j^*))^\perp$, 
we have $v^*\Phi(u_ju_j^*)v=0$. Expanding the left-hand side, we have
\[
0=v^*\Phi(u_ju_j^*)v=\sum_{i=1}^m v^*E_iu_ju_j^*E_i^*v=\sum_{i=1}^m |v^*E_iu_j|^2.
\]
Thus $v\in\langle E_iu_j\mid i\in[m]\rangle^\perp$, showing that 
$\colspan(\Phi(u_ju_j^*))^\perp\leq \langle E_iu_j\mid i\in[m]\rangle^\perp$. The 
claim is then proved. 

It follows that $\colspan(\Phi(u_ju_j^*))=\langle E_iu_j\mid 
i\in[m]\rangle\leq\colspan(P)$ 
for every $j\in[k]$. This implies that $\colspan(P)$ is a nontrivial invariant 
subspace of $\cS_\Phi$, thus $\cS_\Phi$ is reducible.

We now show that $\cS_\Phi$ being reducible implies that $\Phi$ is reducible. Let 
$U$ be a nontrivial invariant subspace of $\cS_\Phi$ of dimension $d$ 
and let $P$ be the orthogonal projection on $U$. It is not hard to verify that $\Phi(P\M(n, \mathbb{C})P)\leq P\M(n, \mathbb{C})P$.
\end{proof}

Recall that a transition matrix $P$ is irreducible if its 
underlying directed graph $G$ is strongly connected. Thus,~\cref{thm:conn} 
formally bridges the irreducibility of transition matrices and quantum channels.
\begin{corollary}\label{cor: equivalence quantum classical irreducibility}
Let $P$ be a transition matrix, $G$ be its underlying directed graph and $\Phi_P$ be the associated quantum channel given in~\cref{eq: CMC to QMC}. Then $P$ is irreducible if and only if $\Phi_P$ is irreducible.
\end{corollary}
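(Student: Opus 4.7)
The plan is to establish the corollary as a short chain of equivalences, each link of which has already been supplied in the paper. The starting point is the classical fact from Markov chain theory: the transition matrix $P$ is irreducible if and only if its underlying directed graph $G$ is strongly connected. This reduces the corollary to comparing strong connectivity of $G$ with irreducibility of the quantum channel $\Phi_P$, and the bridge between the two sides is the matrix space $\cS_G$.

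On the graph-to-matrix-space side, Theorem~\ref{thm:conn_s} gives that $G$ is strongly connected if and only if $\cS_G$ is irreducible. On the channel-to-matrix-space side, Proposition~\ref{obs: channel irred. and space irred.} gives that $\Phi_P$ is irreducible if and only if its Choi--Kraus space $\cS_{\Phi_P}$ is irreducible. The paragraph preceding the corollary moreover observes that $\cS_{\Phi_P}=\cS_G^{*}$. Hence the remaining task is to verify that irreducibility is preserved by the adjoint operation, i.e.~$\cS_G$ is irreducible if and only if $\cS_G^{*}$ is irreducible.

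For this last link, I would argue as follows. Suppose $U\leq \mathbb{C}^n$ is a nontrivial invariant subspace of $\cS_G$, so $BU\subseteq U$ for every $B\in \cS_G$. Then for any $v\in U^{\perp}$, $u\in U$ and $B\in \cS_G$, we have $\langle u, B^{*}v\rangle=\langle Bu,v\rangle=0$, hence $B^{*}v\in U^{\perp}$. Therefore $U^{\perp}$ is a nontrivial invariant subspace of $\cS_G^{*}$. Applying the same argument to $\cS_G^{*}$ in place of $\cS_G$ yields the converse. Concatenating all the equivalences then yields
\[
P \text{ irreducible} \iff G \text{ strongly connected} \iff \cS_G \text{ irreducible} \iff \cS_G^{*}=\cS_{\Phi_P} \text{ irreducible} \iff \Phi_P \text{ irreducible}.
\]

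The main obstacle is not substantive but notational: reconciling the row-vector / right-action convention used in Section~\ref{sec:conn} (where Theorem~\ref{thm:conn_s} is proved) with the column-vector / left-action convention adopted in the quantum information section. Concretely, one must check that "nontrivial right-invariant subspace of $\cS_G$" and "nontrivial left-invariant subspace of $\cS_G$" really name the same property for our purposes. This can be handled either by invoking the orthogonal-complement argument above (which is symmetric in left and right actions) or by observing that transposing exchanges the two actions and sends $\cS_G$ to $\cS_{G^{\mathrm{op}}}$, while strong connectivity of $G$ and of $G^{\mathrm{op}}$ coincide so that Theorem~\ref{thm:conn_s} applies either way.
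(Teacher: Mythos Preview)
Your proposal is correct and follows essentially the same chain of equivalences as the paper's proof. The only difference is in how the passage from $\cS_G$ to $\cS_{\Phi_P}=\cS_G^{*}$ is handled: the paper observes that $\cS_G^{*}=\cS_{G^t}$ for the transpose graph $G^t$ and that $G$ is strongly connected iff $G^t$ is, then applies Theorem~\ref{thm:conn} to $G^t$; you instead argue directly via orthogonal complements that $\cS_G$ is irreducible iff $\cS_G^{*}$ is---and you even mention the transpose-graph route as an alternative in your final paragraph, so the two proofs are really the same.
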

\begin{proof}
Let $G^t=([n],E^t)$ be the transpose graph of $G=([n],E)$, where~$E^t=\{(i,j)\mid~(j,i)\in E\}$. Then $\cS_G^*=\cS_{G^t}$. Moreover, $G$ is strongly connected if and only if $G^t$ is strongly connected. The equivalence then follows from~\cref{thm:conn} and $\cS_{\Phi_P}=\cS_G^*=\cS_{G^t}$.
\end{proof}

\paragraph{Vertex transitivity.}
We then turn to vertex transitivity. Since every vertex-transitive graph $G$ is 
regular, its normalized adjacency matrix $A$ is a symmetric transition matrix. We 
can associate to $G$ the following quantum channel $\Phi_G$ in the spirit 
of~\cref{eq: CMC to QMC}:
\begin{equation}\label{eq: qc of regular graph}
\Phi_G(X)=\frac{1}{d}\sum_{(i,j)\in E} \E_{i,j}X\E_{i,j}^*
\end{equation}
for every $X\in \M(n,\mathbb{C})$, where $d$ denotes the in-degree of any vertex in $G$. It is clear that $\cS_{\Phi_G}=\cS_G$.

An interesting result in~\cite{quasirandom} connects vertex-transitive graphs with 
irreducibly covariant channels. A 
quantum channel $\Phi: \M(n,\mathbb{C})\to \M(n,\mathbb{C})$ is \emph{irreducibly 
covariant}, if there exists a compact group $\Gamma$ and a continuous irreducible 
unitary representation $U:\Gamma\to \mathrm{U}(n)$ such that for any $g\in\Gamma$ 
and $X\in \M(n,\mathbb{C})$, we have $\Phi(U(g)XU(g)^*)=U(g)\Phi(X)U(g)^*$. 
\begin{proposition}[\hspace{1sp}{\cite[Proposition 
3.8]{quasirandom}}]\label{prop:quasirandom}
$G$ is vertex-transitive if and only if $\Phi_G$ (as defined in~\cref{eq: qc of 
regular graph}) is irreducibly covariant.
\end{proposition}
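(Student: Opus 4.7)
The plan is to reduce the proposition to~\cref{thm:trans} by showing that irreducible covariance of $\Phi_G$ is equivalent to conjugacy irreducibility of its Kraus span $\cS_{\Phi_G}=\cS_G$. The forward direction rests on the following bridge lemma: for any Kraus representation $\Phi(X)=\sum_k E_k X E_k^*$ and any $W\in\mathrm{U}(n)$, the operators $\{W^*E_kW\}$ form a Kraus representation of the CP map $X\mapsto W^*\Phi(WXW^*)W$. If $\Phi$ is covariant under $W=U(g)$, this latter map coincides with $\Phi$, and since any two Kraus representations of the same CP map span the same subspace of $\M(n,\mathbb{C})$ (the support of the Choi matrix), one obtains $U(g)^*\cS_\Phi U(g)=\cS_\Phi$, i.e., $U(g)\in\Conj(\cS_\Phi)$. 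Applied to $\Phi_G$, this places the irreducible image $U(\Gamma)$ inside $\Conj(\cS_G)$; hence $\Conj(\cS_G)$ is itself an irreducible matrix group, so $\cS_G$ is conjugacy irreducible, and~\cref{thm:trans} yields that $G$ is vertex-transitive.

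For the backward direction, assume $G$ is vertex-transitive. I will exhibit an explicit witness: take $\Gamma$ to be the compact subgroup of $\mathrm{U}(n)$ generated by the permutation matrices arising from $\aut(G)$ together with the diagonal unitary torus $T^n$, with $U:\Gamma\hookrightarrow\mathrm{U}(n)$ the defining inclusion. Irreducibility of $U$ follows by the same short argument as in the proof of~\cref{prop:transitive}: any $T^n$-invariant subspace of $\mathbb{C}^n$ is a sum of coordinate subspaces, and transitivity of $\aut(G)$ on $[n]$ leaves no proper such sum invariant. Covariance of $\Phi_G$ under $\Gamma$ is then verified directly on generators. For a permutation matrix $P_\sigma$ with $\sigma\in\aut(G)$, $P_\sigma^*\E_{i,j}P_\sigma=\E_{\sigma^{-1}(i),\sigma^{-1}(j)}$ is again a Kraus operator of $\Phi_G$ (since $\sigma$ preserves $E$), so the uniformly weighted Kraus sum is merely permuted. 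For $D=\diag(z_1,\dots,z_n)\in T^n$, $D^*\E_{i,j}D=\overline{z_i}z_j\E_{i,j}$ differs from $\E_{i,j}$ by a scalar of unit modulus, which is absorbed by the conjugation $\E_{i,j}X\E_{i,j}^*$. Hence $\Phi_G$ is $\Gamma$-covariant.

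The main obstacle lies in the forward-direction bridge: converting the analytic covariance condition into the algebraic condition $U(g)\in\Conj(\cS_G)$ hinges on the fact that the Kraus span of a CP map is intrinsic (equivalently, that the support of the Choi matrix is well-defined). This is standard but crucial, and it is worth noting that the converse implication -- that $U\cS_\Phi U^*=\cS_\Phi$ forces $\Phi$ to be $U$-covariant -- fails for general CP maps with non-uniform Kraus weights. Fortunately, this converse is not needed, since in the backward direction the uniform weights $\tfrac{1}{d}$ in the definition of $\Phi_G$ allow covariance to be verified directly on generators as above.
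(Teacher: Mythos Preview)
Your proposal is correct and follows essentially the same route as the paper: in the forward direction you pass from covariance to $U(\Gamma)\leq\Conj(\cS_G)$ via the intrinsic Kraus span and invoke \cref{thm:trans}, and in the backward direction you use the same witnessing group (permutation matrices from $\aut(G)$ together with the diagonal unitary torus) and the irreducibility argument of \cref{prop:transitive}. Your write-up is somewhat more explicit than the paper's (spelling out the bridge lemma and the verification of covariance on generators), but the underlying strategy is identical.
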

We can deduce~\cref{prop:quasirandom} easily from \cref{thm:trans} 
and~\cref{prop:transitive}. 
\begin{proof}
For the if direction, let $U:\Gamma\to \mathrm{U}(n)$ be the irreducible unitary 
representation of some compact group $\Gamma$ such that for any $g\in\Gamma$ and 
$X\in \M(n,\mathbb{C})$, we have $\Phi(U(g)XU(g)^*)=U(g)\Phi(X)U(g)^*$. In this 
case, for every $g\in \Gamma$, the map $\Phi': \M(n,\mathbb{C}) \to 
\M(n,\mathbb{C})$ given by $\Phi'(X)=\Phi(U(g)XU(g)^*)$ and the map $\Phi'': 
\M(n,\mathbb{C}) \to \M(n,\mathbb{C})$ given by $\Phi''(X)=U(g)\Phi(X)U(g)^*$ are 
equivalent. This implies that 
$\cS_{\Phi_G}U(g)=\cS_{\Phi'}=\cS_{\Phi''}=U(g)\cS_{\Phi_G}$. It follows that 
$U(\Gamma)\leq\Conj(\cS_G)$, and, since $U(\Gamma)$ is irreducible, $\Conj(\cS_G)$ 
is irreducible. 
By~\cref{thm:trans}, we can conclude that $G$ is vertex-transitive.

For the only if direction, since $G$ is vertex-transitive,~\cref{prop:transitive} proves that the matrix group $H$ generated by the automorphism group of $G$ (embedded as a subgroup of $\GL(n,\mathbb{C})$) and the group of diagonal unitary matrices is irreducible. It is straightforward to verify that $H$ is compact and $\Phi(UXU^*)=U\Phi(X)U^*$ for any $U\in H$.
\end{proof}

Summarizing \cref{thm:trans} and \cref{prop:quasirandom}, we have the following.
\begin{corollary}\label{cor: vertex-transitive}
The following are equivalent:
\begin{itemize}
\item[(1)] $G$ is vertex-transitive;
\item[(2)] $\Phi_G$ is irreducibly covariant;
\item[(3)] $\cS_G$ is conjugacy irreducible;
\item[(4)] $\cS_G$ is congruence irreducible.
\end{itemize}
\end{corollary}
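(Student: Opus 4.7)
The plan is to observe that this corollary is a direct synthesis of two results already proved in the excerpt: \cref{thm:trans}, which over any field of order $>2$ (in particular $\mathbb{C}$) establishes $(1)\Leftrightarrow(3)\Leftrightarrow(4)$, and \cref{prop:quasirandom}, which establishes $(1)\Leftrightarrow(2)$. So the only real task is to assemble these equivalences into a single chain and verify that \cref{thm:trans} indeed applies in the complex setting.

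First I would invoke \cref{thm:trans} with $\F = \mathbb{C}$, which has order strictly greater than $2$; this immediately yields the equivalences $(1)\Leftrightarrow(3)$ and $(1)\Leftrightarrow(4)$. Next I would invoke \cref{prop:quasirandom} to obtain $(1)\Leftrightarrow(2)$. Combining these by transitivity through condition $(1)$ gives the full cycle of equivalences among $(1), (2), (3), (4)$, completing the proof.

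There is essentially no new mathematical content to develop; the only subtlety to flag is the consistency of the underlying hypotheses. In particular, I would note that \cref{prop:quasirandom} itself (as proved in the excerpt) already relies on \cref{thm:trans} and \cref{prop:transitive}, so the collection of equivalences forms a coherent package over $\mathbb{C}$. No obstacle is anticipated; the corollary is genuinely a summary statement whose proof is a one-line appeal to the two preceding results.
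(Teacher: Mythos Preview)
Your proposal is correct and matches the paper's approach exactly: the paper presents this corollary as a direct summary of \cref{thm:trans} (giving $(1)\Leftrightarrow(3)\Leftrightarrow(4)$ over $\mathbb{C}$) and \cref{prop:quasirandom} (giving $(1)\Leftrightarrow(2)$), with no further argument.
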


\paragraph{Discussion: Quantum (spectral) expanders and dimension expanders.} 
Expander graphs are an important family of graphs which are sparse while highly 
connected. 
A natural expansion property for matrix spaces is the spectral expansion, which is mostly studied for \emph{quantum expanders}~\cite{PhysRevA.76.032315,DBLP:journals/qic/Harrow08,BST10,quantumexpander}. Recall that the spectral expansion of a $d$-regular graph $G$ is the second largest absolute value of the eigenvalues of its adjacency matrix $A_G$. On the other hand, from a $d$-regular graph $G=([n],E)$, we have the linear map $\Phi_G: \M(n,\mathbb{C})\to \M(n,\mathbb{C})$ (as given in~\cref{eq: qc of regular graph}), such that $\cS_G$ is the matrix space spanned by the Choi--Kraus operator of $\Phi_G$. Define the spectral expansion of a linear map $\Phi:\M(n,\mathbb{C})\to\M(n,\mathbb{C})$ as the second largest absolute value of its eigenvalues. 
\begin{theorem}[\hspace{1sp}{\cite[Proposition 3.7]{quasirandom}}]\label{thm: spectral expander}
For any $d$-regular graph $G$, the spectral expansion of $G$ equals the spectral expansion of $\Phi_G$ (as given in~\cref{eq: qc of regular graph}).
\end{theorem}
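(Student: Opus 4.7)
The plan is to diagonalize $\Phi_G$ by exploiting the natural decomposition
$\M(n,\mathbb{C}) = \mathcal{D}\oplus\mathcal{O}$, where
$\mathcal{D} = \langle\E_{i,i}\mid i\in[n]\rangle$ is the subspace of diagonal
matrices and $\mathcal{O} = \langle\E_{k,\ell}\mid k\neq\ell\rangle$ is its
off-diagonal complement, and then to read off the spectrum directly. The key
observation will be that $\Phi_G$ annihilates $\mathcal{O}$ and acts on
$\mathcal{D}\cong\mathbb{C}^n$ as a scalar multiple of the adjacency matrix of
$G$, so that its spectrum is transparently in bijection (up to extra zeros) with
that of $A_G$.

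First I would carry out the core matrix-multiplication computation. Using
$\E_{i,j}^{*}=\E_{j,i}$ together with the product rule
$\E_{a,b}\E_{c,d}=\delta_{b,c}\E_{a,d}$, a short calculation gives
$\E_{i,j}\E_{k,\ell}\E_{j,i}=\delta_{j,k}\delta_{j,\ell}\E_{i,i}$. Plugging this
into the definition of $\Phi_G$ yields $\Phi_G(\E_{k,\ell})=0$ whenever
$k\neq\ell$, and
$\Phi_G(\E_{k,k})=\frac{1}{d}\sum_{i\,:\,(i,k)\in E}\E_{i,i}$. Hence $\Phi_G$
annihilates $\mathcal{O}$ and preserves $\mathcal{D}$. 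Identifying
$\mathcal{D}\cong\mathbb{C}^n$ via $\E_{i,i}\mapsto e_i$, the restriction
$\Phi_G|_{\mathcal{D}}$ is precisely multiplication by $\frac{1}{d}A_G$.

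Second, assembling the pieces, the multiset of eigenvalues of $\Phi_G$ on
$\M(n,\mathbb{C})$ equals the spectrum of $\frac{1}{d}A_G$ together with $n^{2}-n$
copies of $0$ coming from $\mathcal{O}$. Since $G$ is $d$-regular, the Perron
eigenvalue of $A_G$ is $d$ (with the all-ones eigenvector), corresponding to the
top eigenvalue $1$ of $\Phi_G$. The second-largest absolute value among the
eigenvalues of $\Phi_G$ is therefore
$\max\{|\lambda_2(A_G)|/d,\,0\}=|\lambda_2(A_G)|/d$, which matches the spectral
expansion of $G$ under the natural normalized-adjacency-matrix convention
pertinent to quantum channels.

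The main ``obstacle'' is not conceptual but bookkeeping: one must check that the
many artificial zeros contributed by $\mathcal{O}$ do not displace
$|\lambda_2(A_G)|/d$ from its second-place slot. This is automatic since
$|\lambda_2(A_G)|/d \geq 0$. Beyond this trivial check, the proof reduces to the
direct calculation above, with no further technical ingredient required.
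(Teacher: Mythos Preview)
The paper does not actually prove this theorem; it merely quotes it from \cite{quasirandom} without argument, so there is no ``paper's own proof'' to compare against. Your direct computation is correct and is essentially the standard argument: the identity $\E_{i,j}\E_{k,\ell}\E_{j,i}=\delta_{j,k}\delta_{j,\ell}\E_{i,i}$ immediately shows $\Phi_G$ kills the off-diagonal part and acts as $\frac{1}{d}A_G$ on the diagonal part, whence the spectrum of $\Phi_G$ is $\{\lambda_i(A_G)/d\}_{i\in[n]}$ together with $n^2-n$ zeros.

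One small point worth flagging explicitly: as the paper literally defines it, the spectral expansion of $G$ is the second largest $|\lambda_i(A_G)|$ (unnormalized), while your computation gives $|\lambda_2(A_G)|/d$ for $\Phi_G$. You correctly noticed this and appealed to the ``normalized-adjacency-matrix convention''; it would be cleaner to state outright that the theorem holds verbatim only if one reads $A_G$ as the normalized adjacency matrix $\frac{1}{d}A_G$ (which is consistent with the paper's earlier remark that the normalized adjacency matrix of a regular graph is a transition matrix), or else that the two quantities differ by exactly a factor of $d$. This is a wording issue in the paper's statement, not a flaw in your argument.
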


Another type of expansion property for matrix spaces is dimension 
expansion. We say a matrix space $\cS\leq \M(n,\F)$ is a degree-$d$ 
\emph{dimension 
expander} if $\dim(\cS)=d$ and there exists a constant $\alpha$ such that for any 
subspace $V\leq \F^n$ with $\dim(V)\leq n/2$, we have 
$\dim(\cS(V))\geq(1+\alpha)\dim(V)$. 

Dimension expanders have been shown to connect with \emph{monotone 
expanders}~\cite{10.1007/s00493-011-2540-8,v006a012}. Let $\mathcal{F}$ be a set 
of $d$ (partial) monotone functions $f_1,\dots,f_d:[n]\to[n]$. Let 
$E=\{(i,f_j(i))\mid i\in[n],j\in[d]\}$. We call $G_\mathcal{F}=([n]\times [n],E)$ 
a \emph{degree-$d$ monotone graph} with respect to $\mathcal{F}$. A degree-$d$ 
monotone graph is a degree-$d$ monotone expander if 
there exists a constant $\alpha$ such that every set $A\subseteq [n]$ of left 
vertices of size at most $n/2$ has at least $(1 + \alpha)|A|$ neighbors. 
Given a degree-$d$ monotone graph $G_\mathcal{F}=([n],E)$, construct a matrix 
space $\cS_\mathcal{F}=\langle\sum_{i\in[n]}e_ie_{f_j(i)}^t\mid j\in[d]\rangle$. 

\begin{theorem}[\hspace{1sp}\cite{10.1007/s00493-011-2540-8,v006a012}]\label{thm: dimension expander and monotone expander}
For a set of $d$ monotone functions $\mathcal{F}=\{f_1,\dots,f_d\}$, $G_\mathcal{F}$ is a degree-$d$ monotone expander with constant $\alpha$ if and only if the matrix space $\cS_\mathcal{F}$ is a degree-$d$ dimension expander with constant $\alpha$.
\end{theorem}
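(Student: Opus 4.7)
The statement is an equivalence, so I would treat the two directions separately. The reverse direction, dimension expansion implying monotone expansion, is the easy one and goes by testing on coordinate subspaces. Given $A\subseteq[n]$ with $|A|\le n/2$, I would set $V=\langle e_i : i\in A\rangle$, so $\dim V=|A|$. Each generator $M_j=\sum_{i}\E_{i,f_j(i)}$ of $\cS_\mathcal{F}$ satisfies $V M_j=\langle e_k : k\in f_j(A)\rangle$, so $\cS_\mathcal{F}(V)=\langle e_k : k\in N(A)\rangle$, where $N(A)=\bigcup_j f_j(A)$ is the combinatorial neighbourhood of $A$ in $G_\mathcal{F}$. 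The dimension-expansion hypothesis then yields $|N(A)|=\dim\cS_\mathcal{F}(V)\ge(1+\alpha)\dim V=(1+\alpha)|A|$, which is exactly monotone expansion with the same $\alpha$.

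The forward direction is the substantive one. Given an arbitrary $V\le\F^n$ with $\dim V=k\le n/2$, my plan is to put $V$ in reduced row-echelon form, producing a basis $v_1,\dots,v_k$ where $v_i$ has its leading non-zero entry in column $a_i$ and the pivot set is $A=\{a_1<a_2<\cdots<a_k\}$, so $|A|=\dim V$. I would then examine, for each $i\in[k]$ and $j\in[d]$,
\[
v_i M_j \;=\; \sum_{\ell\in\supp(v_i)} v_i(\ell)\, e_{f_j(\ell)}.
\]
Because each $f_j$ is a (partial) strictly monotone function, it is injective and order-preserving on $\supp(v_i)$, so no cancellation occurs and the leading non-zero entry of $v_i M_j$ sits exactly in column $f_j(a_i)$.

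Consequently $\cS_\mathcal{F}(V)$ contains, for every pair $(i,j)$, a vector whose leading column is $f_j(a_i)$. Vectors in a common ambient space with pairwise distinct leading positions are automatically linearly independent, so picking one representative per distinct value $f_j(a_i)$ yields $|\{f_j(a_i):i\in[k],\, j\in[d]\}|=|N(A)|$ linearly independent elements of $\cS_\mathcal{F}(V)$. Coupled with $|A|=\dim V\le n/2$ and the monotone-expansion hypothesis $|N(A)|\ge(1+\alpha)|A|$, this delivers $\dim\cS_\mathcal{F}(V)\ge|N(A)|\ge(1+\alpha)\dim V$, closing the forward implication.

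The main obstacle is precisely the monotonicity step. Without strict monotonicity, two summands $v_i(\ell_1)e_{f_j(\ell_1)}$ and $v_i(\ell_2)e_{f_j(\ell_2)}$ with $f_j(\ell_1)=f_j(\ell_2)$ could cancel the intended leading coefficient, and even an injective but non-monotone $f_j$ could displace the minimum of $\supp(v_i)$ away from $f_j(a_i)$, destroying the pivot bookkeeping. Strict monotonicity is what simultaneously pins the leading positions and puts the combinatorial neighbourhood $N(A)$ into one-to-one correspondence with the linear-algebraic leading-position set, and this order-preservation is the engine driving the whole equivalence.
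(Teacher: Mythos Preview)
The paper does not prove this theorem; it is simply cited from \cite{10.1007/s00493-011-2540-8,v006a012} and used as a black box. Your argument is correct and is essentially the standard proof from those references: the reverse direction by testing on coordinate subspaces, and the forward direction by tracking pivot positions under a monotone map via a row-echelon basis.

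One small point worth tightening: the paper allows the $f_j$ to be \emph{partial} monotone functions, so $f_j(a_i)$ may be undefined. Your pivot argument still goes through, since if $f_j(a_i)$ is defined then every other $\ell\in\supp(v_i)$ satisfies $\ell>a_i$ and hence (when $f_j(\ell)$ is defined) $f_j(\ell)>f_j(a_i)$, so $f_j(a_i)$ is indeed the leading position of $v_iM_j$; and if $f_j(a_i)$ is undefined then that pair contributes nothing to $N(A)$ anyway. It would be worth saying this explicitly rather than implicitly assuming the $f_j$ are total.
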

Note that the matrix space $\cS_\mathcal{F}$ is different from the graphical matrix space $\cS_{G_\mathcal{F}}$.
Combined with Bourgain's explicit construction of constant-degree monotone 
expanders~\cite{BOURGAIN2009357,Bourgain2013}, one directly obtains an explicit 
construction of constant-degree dimension expanders by~\cref{thm: dimension 
expander and monotone expander}. Note that over fields $\F$ of characteristic $0$, 
one may use other expanders to construct dimension 
expanders~\cite{LUBOTZKY2008730}.

In forthcoming work \cite{forthcoming}, we discuss several additional relationships between the different linear-algebraic notions of expansion.

\subsection{Quantum channels and zero-error communication} 
Another way to connect graphs with quantum channels is via a
quantum version of the zero-error capacity problem. In Shannon's seminal 
work~\cite{MR0089131}, an \emph{undirected} graph is associated to each classical 
communication channel, whose the independence number equals the largest number of zero-error messages one can send through the channel. 
Replacing the classical channel by a quantum channel $\Phi$, the role of graphs is taken by the \emph{operator system}
associated with $\Phi$~\cite{duan2013}. More precisely, let $E_1,\dots, E_m\in 
\M(n'\times n,\mathbb{C})$ be a set of Choi--Kraus operators of $\Phi: 
\M(n,\mathbb{C})\to\M(n',\mathbb{C})$. The operator system of $\Phi$ is
$\opsys_\Phi=\langle E_i^*E_j\mid 
i,j\in[m]\rangle\leq \M(n,\mathbb{C})$, which is self-adjoint 
(i.e.,~$X\in \opsys_\Phi$ implies $X^*\in \opsys_\Phi$) and $I_n\in \opsys$ (since 
$\Phi$ is trace-preserving).
On the other hand, for every self-adjoint $\opsys\leq \M(n,\mathbb{C})$ containing 
$I_n$, one 
can find a quantum channel $\Phi$ such that $\opsys_\Phi=\opsys$. The authors 
of \cite{duan2013} viewed operator systems as a noncommutative generalization of graphs, so called 
them noncommutative graphs.

The correspondence between (undirected) graphs and operator systems is the 
following. 
Let $G=([n],E)$ be an undirected graph. We then define its corresponding 
operator system by 
\begin{equation}\label{eq: nc graph}
\opsys_G=\langle \E_{i,j}\mid \{i,j\}\in E~\text{or}~i=j\in[n]\rangle.
\end{equation}
Note that $\opsys_G$ and the graphical matrix space $\cS_G$ are different. We 
identify each undirected graph $G=([n],E)$ as a directed graph 
$\hat{G}=([n],\hat{E})$, where $\hat{E}=\{(i,j)\mid \{i,j\}\in 
E~\text{or}~i=j\in[n]\}$. In other words, we obtain $\hat{G}$ by identifying each 
undirected edge as two directed edges and adding self-loops to each vertex of $G$. 
Through this identification, we have $\opsys_G=\cS_{\hat{G}}$.  

\paragraph{Connectivity of operator systems.}
The connectivity of operator systems was studied in~\cite{CHAVEZDOMINGUEZ202137}. 
For an operator system $\opsys\leq \M(n,\mathbb{C})$, we say $\opsys$ is 
connected, if there 
is no nontrivial orthogonal projection $P\in \M(n,\mathbb{C})$, such that 
$PS(I_n-P)=\{0\}$ (cf.~\cite[Theorem 3.3]{CHAVEZDOMINGUEZ202137}). 
\begin{corollary}\label{cor: conn}
Let $G=([n],E)$ be an undirected graph and $\hat{G}=([n],\hat{E})$ be the directed graph obtained as above. Then the following are equivalent:
\begin{itemize}
\item[(1)] $G$ is connected;
\item[(2)] $\hat{G}$ is strongly connected;
\item[(3)] $\opsys_G$ is connected;
\item[(4)] $\cS_{\hat{G}}$ is irreducible.
\end{itemize}
\end{corollary}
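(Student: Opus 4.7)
I will establish the four-way equivalence by proving \mbox{(1) $\Leftrightarrow$ (2)}, \mbox{(2) $\Leftrightarrow$ (4)}, and \mbox{(3) $\Leftrightarrow$ (4)}. The key structural fact is that, by construction, $\opsys_G = \cS_{\hat G}$, so the operator-system world and the graphical-matrix-space world coincide here; the work is all in translating between the various definitions of ``connectedness.''

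For \mbox{(1) $\Leftrightarrow$ (2)}, I would just use elementary graph theory: since $\hat E$ contains both $(i,j)$ and $(j,i)$ whenever $\{i,j\} \in E$, any undirected walk in $G$ lifts to a directed walk in $\hat G$ in either direction, and conversely any directed walk in $\hat G$ projects to an undirected walk in $G$. Hence $G$ is connected if and only if $\hat G$ is strongly connected. For \mbox{(2) $\Leftrightarrow$ (4)}, I would simply invoke \cref{thm:conn} (the basic correspondence between strong connectivity of a directed graph and irreducibility of its graphical matrix space) applied to the directed graph $\hat G$, together with $\opsys_G = \cS_{\hat G}$.

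The substantive step is \mbox{(3) $\Leftrightarrow$ (4)}, which is a translation between the projection-based notion of a connected operator system and the invariant-subspace-based notion of reducibility. The key observation I would use is that, for any orthogonal projection $P \in \M(n,\mathbb{C})$, the identity $P S (I_n - P) = 0$ for every $S \in \opsys_G$ is equivalent to $S(\ker P) \subseteq \ker P$ for every $S \in \opsys_G$, i.e., to $\ker P$ being an invariant subspace of $\opsys_G = \cS_{\hat G}$. Since $P$ is nontrivial exactly when $\ker P$ is a nontrivial subspace of $\mathbb{C}^n$, this immediately gives one direction: if $\opsys_G$ has no nontrivial projection $P$ with $P\opsys_G(I_n - P) = \{0\}$, then $\cS_{\hat G}$ admits no nontrivial invariant subspace. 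For the reverse, given any nontrivial $\cS_{\hat G}$-invariant subspace $U \leq \mathbb{C}^n$, I would take $P$ to be the orthogonal projection onto $U^{\perp}$, so that $\ker P = U$ is invariant and $P$ is nontrivial, yielding the required witness to disconnectedness.

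I do not anticipate a serious obstacle: the only care needed is in the projection-to-invariant-subspace translation above, where one must keep straight which subspace is the range of $P$ and which is its kernel. Self-adjointness of $\opsys_G$ (inherited from the symmetry of $\hat G$) is not strictly required for this argument but makes it comfortable to pass between $U$ and $U^{\perp}$.
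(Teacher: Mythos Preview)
Your proposal is correct and follows essentially the same route as the paper: both argue \mbox{(1)$\Leftrightarrow$(2)} directly, invoke \cref{thm:conn} for \mbox{(2)$\Leftrightarrow$(4)}, and obtain \mbox{(3)$\Leftrightarrow$(4)} by observing that the projection condition $P\opsys_G(I_n-P)=\{0\}$ is exactly the statement that $\ker P$ is $\cS_{\hat G}$-invariant. The only difference is that the paper additionally cites \cite{CHAVEZDOMINGUEZ202137} for \mbox{(1)$\Leftrightarrow$(3)}, whereas your explicit unpacking of the projection/invariant-subspace translation makes that citation unnecessary and yields a fully self-contained argument.
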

\begin{proof}
The equivalence between $(1)$ and $(2)$ is direct to verify.
The equivalence between $(1)$ and $(3)$ is shown in~\cite[Corollary 
3.4]{CHAVEZDOMINGUEZ202137} and the equivalence between $(2)$ and $(4)$ is due 
to~\cref{thm:conn}. Note that the connectivity of an operator system $\opsys$ 
exactly implies that $\opsys$ is irreducible. Thus $(3)$ and $(4)$ are equivalent.
\end{proof}

\paragraph{Isomorphic operator systems.} The natural notion of equivalence between 
operator systems is the so-called \emph{unital and complete order isomorphism}. 
Namely, two operator systems $\opsys_1$ and $\opsys_2$ are unital, complete order 
isomorphic if there is a unital linear map $\phi:\opsys_1\to \opsys_2$ which is 
one-to-one, onto and completely positive for both $\phi$ and $\phi^{-1}$. For 
undirected graphs $G$ and $H$, the equivalence between $\opsys_G$ and $\opsys_H$ 
is compatible with the isomorphism of $G$ and $H$, and admits a simple form:
\begin{theorem}[\hspace{1sp}{\cite[Theorem 3.3]{ORTIZ2015128}}]\label{thm: unitary isomorphism}
Let $G$ and $H$ be undirected graphs. Then the following are equivalent:
\begin{itemize}
\item[(1)] $G$ and $H$ are isomorphic;
\item[(2)] $\opsys_G$ and $\opsys_H$ are unital and complete order isomorphic;
\item[(3)] There exists a unitary $U$ such that $U\opsys_GU^*=\opsys_H$.
\end{itemize}
\end{theorem}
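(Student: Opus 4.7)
My plan is to establish the cycle $(1) \Rightarrow (3) \Rightarrow (2) \Rightarrow (1)$, which is natural because the easy direction $(1) \Rightarrow (3)$ gives us a concrete unitary, and the main challenge will be recovering a unitary from the abstract isomorphism in (2).

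For $(1) \Rightarrow (3)$: given a graph isomorphism $\sigma: V(G) \to V(H)$, take $U$ to be the permutation matrix $P_\sigma$. Since $P_\sigma \E_{i,j} P_\sigma^* = \E_{\sigma(i),\sigma(j)}$ and $\sigma$ maps the edges and self-loops defining $\opsys_G$ (see \cref{eq: nc graph}) bijectively onto those of $\opsys_H$, we obtain $P_\sigma \opsys_G P_\sigma^* = \opsys_H$. For $(3) \Rightarrow (2)$: if $U$ is unitary and $U\opsys_G U^* = \opsys_H$, then $\phi(X) := UXU^*$ restricts a unital $*$-automorphism of $\M(n,\mathbb{C})$ to a unital linear bijection $\opsys_G \to \opsys_H$; $*$-homomorphisms are completely positive, and the inverse $\phi^{-1}(Y) = U^*YU$ has the same form, so $\phi$ is a unital complete order isomorphism.

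The main work is $(2) \Rightarrow (1)$, and I will split it into two substeps. First, I promote the abstract complete order isomorphism to a unitary conjugation. The structural fact I need is that the unital $C^*$-algebra generated by $\opsys_G$ inside $\M(n,\mathbb{C})$ decomposes as $C^*(\opsys_G) = \bigoplus_c \M(V_c,\mathbb{C})$, where $V_c$ ranges over the connected components of $G$; this follows since $\E_{i,i},\E_{i,j} \in \opsys_G$ for each edge $\{i,j\}$ and the relation $\E_{i,j}\E_{j,k}=\E_{i,k}$ propagates to produce all matrix units within a connected component, and none across components. The hard part will be invoking a standard extension theorem in operator system theory (e.g.\ Paulsen's extension/$C^*$-envelope arguments, which apply because the identity representation on each block of $C^*(\opsys_G)$ is the unique irreducible $*$-representation generated by $\opsys_G$) to show that any unital complete order isomorphism $\phi:\opsys_G \to \opsys_H$ extends uniquely to a unital $*$-isomorphism $\tilde\phi: C^*(\opsys_G) \to C^*(\opsys_H)$. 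Since $*$-isomorphisms between finite direct sums of full matrix algebras must match blocks of equal size and act as unitary conjugations within each matched block, I get a block-diagonal unitary $U \in \M(n,\mathbb{C})$ realizing $U\opsys_G U^* = \opsys_H$.

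The second substep deduces the graph isomorphism from this unitary. Writing $\opsys_G = \cS_{\hat G}$ and $\opsys_H = \cS_{\hat H}$ for the associated directed graphs with self-loops, the relation $U \cS_{\hat G} U^* = \cS_{\hat H}$ is a congruence in the $\F=\mathbb{C}$ sense (using $T^*$ in place of $T^t$ as in the remark following \cref{prop:congruence}). Applying \cref{prop:congruence} gives that $\hat G$ is isomorphic to a subgraph of $\hat H$; applying the same proposition to $U^{-1}$ gives the reverse embedding. Combined with $|E(\hat G)| = \dim\opsys_G = \dim\opsys_H = |E(\hat H)|$ and $|V(\hat G)| = n = |V(\hat H)|$, these embeddings must be surjective, yielding an isomorphism $\hat G \cong \hat H$ of directed graphs, which is exactly an isomorphism $G \cong H$ of undirected graphs.
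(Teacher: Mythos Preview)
Your proposal is essentially correct, but note that the paper does not actually prove this theorem: it is quoted as a result of Ortiz and Paulsen. The paper only remarks, after \cref{cor: isomorphism}, that the nontrivial direction $(3)\Rightarrow(1)$ is immediate from their \cref{prop:congruence}; the equivalence with $(2)$ is left entirely to the cited reference. So for $(1)\Leftrightarrow(3)$ your argument matches the paper exactly: the permutation matrix gives $(1)\Rightarrow(3)$, and you invoke \cref{prop:congruence} for $(3)\Rightarrow(1)$ just as the paper does. (Your final step is slightly redundant: once $\hat G$ embeds in $\hat H$ and $\dim\opsys_G=\dim\opsys_H$, the edge counts force the embedding to be onto, so the reverse application of \cref{prop:congruence} is not needed.)

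For $(2)\Rightarrow(3)$ you sketch the $C^*$-envelope route: identify $C^*(\opsys_G)$ with the direct sum of full matrix algebras over the connected components, then extend the unital complete order isomorphism to a $*$-isomorphism of the generated $C^*$-algebras, which is implemented by a block unitary. This is precisely the Ortiz--Paulsen argument, and it is correct, but be aware that the extension step is not as soft as you make it sound: one needs that $C^*(\opsys_G)$ is the $C^*$-envelope of $\opsys_G$ (equivalently, that the identity representation of $\opsys_G$ has the unique extension property), which in their paper is verified component by component. You gesture at this with your parenthetical about irreducible representations, but in a self-contained writeup you would need to justify it. Since the present paper does not engage with $(2)$ at all, there is nothing further to compare on that implication.
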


We note that \cref{prop:congruence} could be 
viewed as a generalization of the equivalence between $(1)$ and $(3)$ 
in~\cref{thm: unitary isomorphism}, as shown in the following result.
\begin{corollary}\label{cor: isomorphism}
Let $G$ and $H$ be undirected graphs. 
Then the following are equivalent:
\begin{itemize}
\item[(1)] $G$ and $H$ are isomorphic;
\item[(2)] There exists an invertible matrix $T$ such that $T\opsys_GT^*=\opsys_H$;
\end{itemize}
\end{corollary}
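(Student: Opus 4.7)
The key observation is that, by construction, $\opsys_G = \cS_{\hat G}$ and $\opsys_H = \cS_{\hat H}$ are graphical matrix spaces of the directed graphs $\hat G$ and $\hat H$ obtained from $G$ and $H$ by replacing each undirected edge with two oppositely-oriented arcs and adding a self-loop at every vertex. The plan is therefore to transfer the question from the undirected world into the directed-graph setting, where we can invoke \cref{prop:congruence} (which, by the remark following it, holds verbatim over $\mathbb{C}$ with the conjugate transpose in place of the transpose).

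For the implication $(1) \Rightarrow (2)$, I would take a graph isomorphism $\sigma:[n]\to[n]$ between $G$ and $H$; then $\sigma$ is also an isomorphism $\hat G \to \hat H$ (since the self-loops and reversed arcs are preserved coordinate-by-coordinate). Letting $P \in \GL(n,\mathbb{C})$ be the permutation matrix associated with $\sigma$, a direct check shows $P\,\opsys_G\,P^{t} = \opsys_H$. Since $P$ is a real permutation matrix we have $P^{t}=P^{-1}=P^{*}$, so taking $T := P$ gives $T\opsys_G T^* = \opsys_H$, as required.

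For $(2) \Rightarrow (1)$, suppose $T \in \GL(n,\mathbb{C})$ satisfies $T \opsys_G T^* = \opsys_H$, i.e.\ $T \cS_{\hat G} T^* = \cS_{\hat H}$. Applying the complex form of \cref{prop:congruence} (the congruence version using $T^*$) in both directions—once with $T$ and once with $T^{-1}$—yields that $\hat G$ is isomorphic to a subgraph of $\hat H$ and $\hat H$ is isomorphic to a subgraph of $\hat G$. Since $\hat G$ and $\hat H$ both have vertex set $[n]$, any such embedding is an injection on arcs, so $|\hat E| = |\hat F|$, whence the two subgraph embeddings must actually be isomorphisms $\hat G \cong \hat H$. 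Finally, any isomorphism $\pi$ of $\hat G$ onto $\hat H$ is a permutation of $[n]$, and because the directed arcs at every pair $(i,j)$ in $\hat G$ come in symmetric pairs (plus loops), $\pi$ pulls back to a graph isomorphism $G \cong H$.

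The only step requiring any care is the last one: arguing that two $n$-vertex directed graphs that mutually embed into each other as subgraphs are actually isomorphic. This follows from the arc-count equality above, but I would state it explicitly to make the deduction airtight. Everything else is essentially a translation between the undirected operator-system language of \cite{ORTIZ2015128} and the directed graphical matrix-space framework developed in \cref{sec:iso}, together with the observation that on permutation matrices congruence by $T^*$ coincides with the usual conjugation.
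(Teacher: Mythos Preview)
Your proposal is correct and follows the same approach as the paper: translate to the directed graphs $\hat G,\hat H$ via $\opsys_G=\cS_{\hat G}$, then invoke \cref{prop:congruence} (in its $\mathbb{C}$ form with $T^*$). The paper's proof is a one-liner citing exactly these ingredients; you have simply unpacked the details, in particular the arc-count argument upgrading ``mutual subgraph embedding'' to ``isomorphism''.

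One minor streamlining: rather than applying \cref{prop:congruence} twice (with $T$ and with $T^{-1}$) and then comparing arc counts, you could invoke \cref{cor:cong_conseq} once. That corollary already records that when $T\cS_{\hat G}T^*=\cS_{\hat H}$ (equality, not just containment), the permutation $\sigma$ produced in the proof of \cref{prop:congruence} is an isomorphism, because $\dim\cS_{\hat G}=\dim\cS_{\hat H}$ forces $|\hat E|=|\hat F|$ and the arc map $(i,j)\mapsto(\sigma(i),\sigma(j))$ is injective. This is exactly your arc-count observation, just packaged as a standing corollary, and it spares you from having to argue that two a priori different embeddings are each isomorphisms.
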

\begin{proof}
Let $\hat{G}$ and $\hat{H}$ be the corresponding directed graphs obtained as 
above. 
Note that $G$ and $H$ are isomorphic if and only if $\hat{G}$ and 
$\hat{H}$ are isomorphic.
Then the equivalence between $(1)$ and $(2)$ follows from~\cref{prop:congruence} 
and $\opsys_G=\cS_{\hat{G}}$ and $\opsys_H=\cS_{\hat{H}}$. 
\end{proof}
In \cref{thm: unitary isomorphism}, to show the equivalence between (1) and (3), 
the more difficult direction is to show (3)$\Rightarrow$(1). This is immediate 
from our \cref{cor: isomorphism}.

\section*{Acknowledgement}

Y.Q. would like to thank George Glauberman for helping him with~\cref{prop:transitive}. 

\bibliographystyle{alpha}
\bibliography{references}

\appendix

\section{On finding cycles in directed graphs}

\begin{proposition}\label{prop:cyclicity_matching}
There is an NC-reduction from finding cycles in directed graphs to 
finding perfect matchings in bipartite graphs. 
\end{proposition}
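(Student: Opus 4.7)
\vspace{6pt}
\noindent The plan is to reduce cycle-finding in $G=([n],E)$ first to finding a directed $s$-$t$ path, and then to reduce the latter to bipartite perfect matching. First I would compute the transitive closure $T$ of $G$ via iterated matrix squaring, which is in NC. If any self-loop $(v,v)\in E$ exists, output it as a length-$1$ cycle. Otherwise, in parallel over all arcs, search for an arc $(u,v)\in E$ (necessarily with $u\ne v$) such that $T(v,u)=1$, i.e., $v$ can reach $u$ in $G$. Such an arc exists if and only if $G$ is cyclic, so if none is found we report ``acyclic''. Otherwise fix any such $(u,v)$ (say, the lexicographically first); it then suffices to find a directed $v$-to-$u$ path in $G$, which concatenated with $(u,v)$ gives a directed cycle.

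\vspace{6pt}
\noindent The core ingredient is a reduction from directed $s$-$t$ path (with $s\ne t$ and $t$ reachable from $s$) to bipartite PM. Construct a bipartite graph $B=(L\cup R, F)$ with $L=[n]\setminus\{t\}$ and $R=[n]\setminus\{s\}$; the edge set $F$ consists of the pairs $(i,j)\in L\times R$ for each arc $(i,j)\in E$, together with the identity pair $(w,w)\in L\times R$ for each $w\in [n]\setminus\{s,t\}$ (both copies of such $w$ exist in $L$ and $R$). I claim $B$ has a perfect matching if and only if $G$ has a directed $s$-to-$t$ path, and any PM of $B$ constructively produces such a path. The forward direction is immediate: a simple directed path $s=p_0,p_1,\ldots,p_k=t$ yields the PM that pairs $p_i\in L$ with $p_{i+1}\in R$ and pairs every other vertex with its own copy. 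For the reverse direction, view the PM as a bijection $\sigma:L\to R$ and iterate $w_0=s$, $w_{i+1}=\sigma(w_i)$. The $w_i$ are all distinct: if $i<j$ were minimal with $w_i=w_j$, then either $i=0$ (so $s=w_j$ would lie in $R$, contradicting $s\in L\setminus R$) or $i\geq 1$ (so injectivity of $\sigma$ gives $w_{i-1}=w_{j-1}$, contradicting minimality). The iteration must therefore exit $L$, and this can only happen at $t\in R\setminus L$, so the trail is a simple directed $s$-to-$t$ path in $G$.

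\vspace{6pt}
\noindent Applying the bipartite PM oracle to the instance with $s=v$, $t=u$ and extracting the trail by pointer-jumping on $\sigma$ (an NC operation) yields a directed $v$-to-$u$ path $P$. Since $P$ is simple and visits $v$ only at its starting position, $P$ does not use the arc $(u,v)$, so $P$ together with the arc $(u,v)$ is a directed cycle in $G$. All non-oracle steps---transitive closure, locating $(u,v)$, building $B$, and recovering $P$ from $\sigma$---are standard NC operations, completing the NC-reduction. The main conceptual obstacle is ensuring that the PM returned by the oracle produces a \emph{simple} path rather than a walk that loops in the interior; this is exactly what the asymmetric choice of $L$ and $R$ (placing $s$ only in $L$ and $t$ only in $R$) is designed to enforce via the injectivity argument above.
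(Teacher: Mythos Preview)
Your proof is correct, but it takes a genuinely different route from the paper's. The paper builds, for the whole graph $G$ at once, a bipartite graph on $4n$ vertices: each $i\in[n]$ contributes a length-$3$ gadget path $P_i$, and arcs of $G$ become extra edges. A perfect matching selects, for each $i$, either the middle edge of $P_i$ (putting $i$ into a set $S$) or its two outer edges; on $S$ the matching then gives a disjoint cycle cover. To avoid the trivial matching with $S=\emptyset$, the paper removes the gadget $P_i$ for one vertex $i$ (forcing $i\in S$) and runs the PM oracle once for each $i\in[n]$ in parallel. Your approach instead does NC preprocessing (transitive closure) to pin down a single arc $(u,v)$ lying on some cycle, then reduces a single $v$-to-$u$ path query to one bipartite PM instance on $2(n-1)$ vertices via the standard ``delete $t$ from $L$, delete $s$ from $R$, add identity edges'' construction. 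The trade-off: the paper's gadget is self-contained and avoids any transitive-closure computation, at the cost of $n$ oracle calls; your argument uses only one oracle call but relies on the (admittedly cheap) NC transitive closure and pointer-jumping primitives. Both are valid NC Turing reductions; yours is in fact an NC many-one reduction once the arc $(u,v)$ is fixed.
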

\begin{proof}
Let $G=([n], E)$ be a directed graph. 
We construct a bipartite graph $H=(L\times R, F)$ as follows: Let $L$ and $R$ be 
two copies of $[n]$. To 
distinguish them, denote the vertices of $L$ as $i_{L,1},i_{L,2}$ for all 
$i\in[n]$ and the vertices of $R$ as $j_{R,1},j_{R,2}$ for all $j\in[n]$. The edge 
set $F$ is constructed as follows: For every $(i,j)\in E$, set 
$(i_{L,1},j_{R,1})\in F$. For every $i\in[n]$, let 
$(i_{L,1},i_{R,2}),(i_{L,2},i_{R,2}),(i_{L,2},i_{R,1})\in F$. The second set of 
edges is exactly length-$3$ paths from $i_{L,1}$ to $i_{R,1}$, which will be 
denoted by $P_i$ for each $i\in[n]$.

Consider perfect matchings in $H$, which must take edges from each $P_i$. For a 
fixed perfect matching $M$, divide $[n]$ into two disjoint sets $S$ and $T$, 
where $S$ contains those $i$ where the edge $(i_{L,2},i_{R,2})\in M$, and $T$ 
contains those $i$ where the edges $(i_{L,1},i_{R,2}),(i_{L,2},i_{R,1})\in M$. 
Note that if $S\neq\emptyset$, the edges in $M$ which start from $i_{L,1}$ for 
$i\in S$ will end in some $j_{R,1}$ for $j\in S$, and these edges form a cycle or 
disjoint union of cycles in $G$.

The only problem is $S$ can be empty. To resolve this issue, one can remove the 
edges in $P_1$ from $F$ and repeat the above steps, which will yield cycles in $G$ 
containing $1$ (if there is one). Repeating this for all $i$ yields the 
desired reduction.
\end{proof}
\cref{prop:cyclicity_matching} implies that finding a cycle on a directed 
graph can be done in quasi-NC by~\cite{doi:10.1137/16M1097870}. However, it is not 
hard to show that finding a cycle in a directed graph is in NC. 
\begin{proposition}\label{prop:cycle_nc}
Finding a cycle in a directed graph can be solved in NC.
\end{proposition}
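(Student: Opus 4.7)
The plan is to reduce the task to two well-studied primitives known to lie in NC when applied to the adjacency matrix of $G = ([n], E)$: Boolean transitive closure and all-pairs shortest-path distances in the tropical $(\min,+)$ semiring. Both can be obtained by $O(\log n)$ rounds of repeated matrix squaring in the appropriate semiring, each round being a single matrix multiplication. Write $R[i,j] = 1$ iff there is a directed path from $i$ to $j$ using at least one edge, and $D[i,j]$ for the length of a shortest such path ($\infty$ if none exists).

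Cycle detection is immediate from $R$: $G$ has a cycle iff some diagonal entry of $R$ is $1$, equivalently $D[v,v] < \infty$ for some $v$. If so, I take $v_0$ to be the smallest-indexed such vertex and set $L := D[v_0, v_0]$; both are NC-computable from $D$. Any shortest closed walk through $v_0$ is automatically a simple cycle, since a repeated vertex could be spliced out to produce a strictly shorter closed walk through $v_0$, contradicting the minimality of $L$.

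The only nontrivial step is outputting an actual cycle of length $L$ through $v_0$. Naive edge-by-edge tracing would be sequential with $\Omega(L)$ depth, so I use parallel path reconstruction by divide and conquer. To reconstruct a shortest path from $a$ to $b$ of known length $\ell$, I find in parallel the smallest $m \in [n]$ satisfying $D[a,m] = \lceil \ell/2 \rceil$ and $D[m,b] = \lfloor \ell/2 \rfloor$ (such $m$ must exist, being any midpoint of some shortest $a$-to-$b$ path), and then recursively reconstruct the half-paths $a \to m$ and $m \to b$; the base case $\ell = 1$ outputs the edge $(a,b) \in E$. The recursion tree has depth $O(\log n)$, and each node performs only a minimum over $n$ lookups in the precomputed $D$, itself an NC operation. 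The main obstacle is exactly what this step overcomes: sequential reconstruction is inherent to straightforward approaches, and the crucial observation is that precomputing $D$ once globally lets every recursive subproblem locate its midpoint by a single parallel minimization rather than by recomputing reachability, keeping the overall depth polylogarithmic and the work polynomial.
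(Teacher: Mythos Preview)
Your proof is correct, and while it shares the core idea of repeated matrix squaring with the paper's argument, the execution differs. The paper works over the Boolean semiring: during each squaring step $A_G^{2^{k-1}} \mapsto A_G^{2^k}$ it stores, for every nonzero entry $(i,j)$, an explicit walk of length $2^k$ obtained by concatenating two stored walks of length $2^{k-1}$ through some intermediate vertex. After $\lceil \log n \rceil$ rounds this yields, for any nonzero entry, a walk of length $\geq n$, and a cycle is then extracted from that long walk by pigeonhole. You instead work over the $(\min,+)$ semiring to compute exact shortest-path distances, detect a cycle via a finite diagonal entry, and reconstruct a \emph{shortest} closed walk through $v_0$ by a separate midpoint-bisection recursion using the precomputed $D$. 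Your approach gives a slightly stronger output (a shortest cycle through $v_0$, automatically simple) at the cost of a second pass; the paper's approach is a bit more direct, folding witness storage into the squaring itself and accepting an arbitrary cycle rather than a shortest one. Both are standard NC techniques and both land in $\mathrm{NC}^2$.
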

\begin{proof}[Proof sketch]
Let $G=([n], E)$ be a directed graph. Let $A_G$ be the adjacency matrix of $G$. 
For each entry of $A_G$ we store a walk of length $1$. The for $k\in\lceil \log 
n\rceil$, compute $A_G^{2^k}$ recursively using $A_G^{2^{k-1}}$. Furthermore, for 
each entry of $A_G^{2^k}$, if it is non-zero, store a walk of length $2^k$ (by 
concatenating two walks). In this way, if $A_G^{2^{\lceil \log n\rceil}}$ is 
non-zero (which happens if and only if $G$ is cyclic), we obtain a walk of length 
$\geq n$, from which we can extract a cycle easily. All the above steps run in 
$\mathrm{NC}^2$.
\end{proof}

\begin{proposition}\label{prop:nil_sing}
Matrix space nil testing reduces to symbolic determinant identity testing.
\end{proposition}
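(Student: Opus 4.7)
The plan is to reduce matrix space nil testing to a single instance of SDIT via the characteristic polynomial. Given a matrix space $\cS = \langle B_1, \ldots, B_m\rangle \leq \M(n,\F)$, form the generic element $B(x) := \sum_{i=1}^m x_i B_i$, a symbolic matrix whose entries are homogeneous linear forms in the fresh variables $x_1, \ldots, x_m$. Since a matrix is nilpotent exactly when its characteristic polynomial is $\lambda^n$, the space $\cS$ is nil if and only if the polynomial
\[
p(\lambda, x) := \det(\lambda I - B(x)) - \lambda^n
\]
vanishes identically in $\F[\lambda, x_1, \ldots, x_m]$.

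The remaining step is to realize the identity ``$p \equiv 0$'' as a single SDIT instance, i.e., to construct a symbolic matrix $M$ with affine linear entries such that $\det(M) = p(\lambda, x)$. I would use Berkowitz's construction \cite{Ber84} --- already invoked in the proof of~\cref{prop:SDIT_NilIndex} --- which exhibits $\det(\lambda I - B(x))$ as the $(1,1)$-entry of a product $C_1 \cdots C_\ell$ of symbolic matrices of size $t = \poly(n)$ whose entries are affine linear in $\lambda, x$. The same block-matrix embedding as in the proof of~\cref{prop:SDIT_NilIndex} then realizes this $(1,1)$-entry as the determinant of a single symbolic matrix of size $\poly(n)$, and the subtraction of $\lambda^n$ can be absorbed by modifying one designated entry. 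Alternatively, one can invoke Valiant's completeness theorem~\cite{Val79} more abstractly: since $p$ admits an arithmetic formula of size $\poly(n, m)$, it equals the determinant of a polynomial-size symbolic matrix with affine linear entries.

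With $M$ in hand the reduction is immediate: $\cS$ is nil if and only if $p \equiv 0$ if and only if $\det(M) \equiv 0$, which is exactly the SDIT question on $M$. The main obstacle is the explicit construction of $M$ in the second step, namely ensuring that its entries remain affine linear in the variables while its overall size stays polynomial in $n$ and $m$. This is routine but requires some bookkeeping either with the iterative Berkowitz construction or with the Valiant projection; beyond these standard determinantal tools, no new ideas are needed.
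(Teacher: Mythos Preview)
Your proposal is correct and takes a genuinely different route from the paper. The paper's proof uses the criterion that $\cS$ is nil if and only if $\vB^n$ is the all-zero matrix of polynomials (where $\vB = \sum_i x_i B_i$), and then invokes \cite{FS13} to reduce testing whether each of the $n^2$ entries of $\vB^n$ is the zero polynomial to an SDIT instance, yielding $n^2$ SDIT instances in total. You instead use the equivalent criterion that the characteristic polynomial of $\vB$ equals $\lambda^n$, and then package the whole test into a single SDIT instance via a Berkowitz/Valiant determinantal representation of $p(\lambda,x)=\det(\lambda I - \vB)-\lambda^n$.

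Your route is arguably tidier in that it produces one SDIT instance rather than $n^2$, and the characteristic-polynomial formulation is the most transparent way to encode nilpotency. The paper's route is a bit more self-contained at the reduction step: quoting \cite{FS13} directly handles the ``entry of an iterated matrix product is zero'' test without having to build an explicit determinantal representation, and in particular sidesteps the small bookkeeping you flag around subtracting $\lambda^n$ while keeping all entries affine linear. On that last point, your ``absorb by modifying one designated entry'' remark is a bit optimistic as stated, but your Valiant fallback (the difference of two polynomials with small formulas again has a small formula, hence a small determinantal representation) resolves it cleanly. Both arguments ultimately rest on the same equivalence---that $\cS$ is nil over $\F$ iff the generic element $\vB$ is nilpotent over $\F(x_1,\ldots,x_m)$---which the paper also takes as given.
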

\begin{proof}
Let $\cS=\langle B_1, \dots, B_m\rangle \leq\M(n, \F)$ be a matrix space. Let 
$x_1, \dots, x_m$ be variables, and construct a symbolic matrix $\vB:= 
x_1B_1+\dots+x_mB_m$. Note that $\cS$ is nil if and only if $\vB^n$ is the 
all-zero matrix. By~\cite{FS13}, testing if a particular entry of $\vB^n$ is the 
zero polynomial reduces to SDIT. So we can use $n^2$ instances of SDIT to solve 
the matrix space nil testing problem.
\end{proof}

\end{document}